\title{High-dimensional tennis balls}
\author{W. T. Gowers \and K. Wyczesany}
\date{}
\newtheorem{theorem}{Theorem}[section]
\newtheorem{proposition}[theorem]{Proposition}
\newtheorem{lemma}[theorem]{Lemma}
\newtheorem*{claim1*}{Claim 1}
\newtheorem*{claim2*}{Claim 2}
\newtheorem*{definition*}{Definition}
\newtheorem{corollary}[theorem]{Corollary}
\newtheorem{question}[theorem]{Question}
\theoremstyle{definition}
\newtheorem{remark}[theorem]{Remark}
\newtheorem{definition}[theorem]{Definition}
\def\e{\epsilon}
\def\E{{\mathbb{E}}}
\def\R{\mathbb{R}}
\def\N{\mathbb{N}}
\def\P{\mathbb{P}}
\def\a{\alpha}
\def\be{\beta}
\def\g{\gamma}
\def\d{\delta}
\def\v{\varphi}
\def\D{\Delta}
\def\b1{\mathbbm{1}}
\newcommand{\appropto}{\mathrel{\vcenter{
  \offinterlineskip\halign{\hfil$##$\cr
    \propto\cr\noalign{\kern2pt}\sim\cr\noalign{\kern-2pt}}}}}
\newcommand{\vertiii}[1]{{\left\vert\kern-0.25ex\left\vert\kern-0.25ex\left\vert #1 
    \right\vert\kern-0.25ex\right\vert\kern-0.25ex\right\vert}}
\def \sp#1{\langle #1\rangle}
\def\kasia#1{{\color{magenta} \textsc{(Kasia says: } \textsf{#1})}}
\begin{document}

\maketitle

\begin{abstract}
We show that there exist constants $\a,\,\e>0$ such that for every positive integer $n$ there is a continuous odd function $f:S^m\to S^n$, with $m\geq \a n$, such that the $\e$-expansion of the image of $f$ does not contain a great circle. This result is motivated by a conjecture of Vitali Milman about well-complemented almost Euclidean subspaces of spaces uniformly isomorphic to $\ell_2^n$.    
\end{abstract}

\section{Introduction}
Let $A$ be a measurable subset of the sphere $S^n$. How large must the measure of $A$ be in order to guarantee that the $\e$-expansion of $A$, that is, the set $A_\e$ that consists of all points at distance at most $\e$ from $A$, contains the unit sphere of a subspace of dimension $k$? Such questions have been much studied ever since Milman's famous proof \cite{Milman1971} of Dvoretzky's theorem \cite{Dvoretzky1961}. Milman's insight was that by the isoperimetric inequality in the sphere, the volume of the $\e/2$-expansion (say) is minimized, for a given measure of $A$, when $A$ is a spherical cap. But when $A$ is a spherical cap of measure $\a$ and $n$ is large, a relatively straightforward calculation shows that $A_{\e/2}$, which is again a spherical cap, has measure very close to 1. From this it follows, again straightforwardly, that under suitable conditions on the parameters, an $\e/2$-net of the sphere of a \emph{random} subspace of dimension $k$ will lie in $A_{\e/2}$ with high probability, and hence that the entire sphere will lie in $A_\e$. This basic argument can be used to prove the surprising result that even if $\e$ is quite small, an exponentially small measure for $A$ is sufficient to guarantee that $A_\e$ contains the sphere of an $k$-dimensional subspace for some $k$ of dimension that is linear in $n$.

If $A$ has measure $c^n$ and $c$ is too small, then the above argument fails, and the conclusion is false. Indeed, if $A$ is a spherical cap of volume $c^n$, and if its spherical radius is $\pi/2-\eta$, then unless $\e>\eta$ the volume of $A_\e$ will not be close to 1, and if $\e<\eta$ then $A_\e$ will not even contain two antipodal points, let alone the sphere of a subspace of dimension $k$.

One could attempt to rule out this simple example by restricting attention to centrally symmetric sets, but that does not achieve much: if $A$ is the cap just discussed, and if its centre is the unit vector $u$, then $A\cup(-A)$ is centrally symmetric, and $(A\cup(-A))_\e$ is disjoint from the hyperplane orthogonal to $u$, which implies that $(A\cup(-A))_\e$ does not contain the sphere of any 2-dimensional subspace. 

A noticeable feature of this example is that it is in a certain sense zero-dimensional: if we identify antipodal points and write $q$ for the quotient map, then $q(A)$ is a subset of projective $n$-space, and it is homotopic to a point. Having made this observation, it is natural to wonder what happens if we impose a condition that forces $q(A)$ to have a higher dimension in this topological sense. That motivates the following definition.

\begin{definition} An $m$-dimensional \emph{topological subsphere} of $S^n$ is the image of a continuous function $f:S^m\to S^n$ that preserves antipodal points. \end{definition}

\noindent If an $m$-dimensional topological subsphere of $S^n$ is the unit sphere of an $(m+1)$-dimensional subspace of $\R^{n+1}$, then we shall call it \emph{linear}.

An $m$-dimensional topological subsphere is in a certain sense ``genuinely $m$-dimensional". For instance, if $X$ is such a subsphere and $g:X\to\R^m$ is a continuous function, then $g\circ f$ is a continuous function from $S^m$ to $\R^m$, which implies, by the Borsuk-Ulam theorem, that there is some $x\in S^m$ such that $g(f(x))=g(f(-x))$, and therefore that $g(f(x))=g(-f(x))$. Thus, for any continuous map from $X$ to $\R^m$ there will be two antipodal points with the same image.

We now ask the following question. 

\begin{question} Let $\e>0$, let $k$ be a positive integer, and let $X$ be an $m$-dimensional topological subsphere of $S^n$. How large does $m$ have to be in order to guarantee that $X_\e$ contains a linear subsphere of dimension~$k$?\end{question}

In order to tackle this question, an obvious first step is to see how well one can do using concentration of measure. That is, we consider instead a slightly stronger question.

\begin{question} Let $\e>0$, let $k$ be a positive integer, and let $X$ be an $m$-dimensional topological subsphere of $S^n$. How large does $m$ have to be in order to guarantee that $X_\e$ contains almost all linear subspheres of dimension $k$?\end{question}

By standard arguments, that is roughly the same as asking for $X_\e$ to have measure at least $1-\e^k$.

The following estimate is well known. See for example \cite{Artstein2002}.

\begin{lemma} Let $m=\a n$ and let $\e(\a)$ be such that $\sin\e(\a)=\sqrt{1-\a}$. Then if $X$ is a linear $m$-dimensional subspace, the measure of $X_\e$ tends to 1 if $\e>\e(\a)$ and to 0 if $\e<\e(\a)$. 
\end{lemma}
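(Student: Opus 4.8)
The plan is to compute the measure of the $\e$-expansion of a linear $m$-dimensional subsphere $X$ directly, using the rotational symmetry of the problem. First I would observe that, up to a measure-zero set, a point $y\in S^n$ lies in $X_\e$ if and only if the geodesic distance from $y$ to the subspace $V$ spanned by $X$ is at most $\e$; and this distance is $\arccos$ of the norm of the orthogonal projection $P_V y$. Writing $n+1 = (m+1) + (n-m)$ as an orthogonal decomposition $\R^{n+1} = V \oplus V^\perp$, the condition $d(y, X)\le\e$ becomes $\|P_V y\|^2 \ge \cos^2\e$, equivalently $\|P_{V^\perp} y\|^2 \le \sin^2\e$. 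So I need the distribution of $\|P_{V^\perp} y\|^2$ when $y$ is uniform on $S^n$.

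The key step is the standard fact that if $y$ is uniform on $S^n \subset \R^{n+1}$ and we split coordinates into a block of size $n-m$ and a block of size $m+1$, then $\|P_{V^\perp} y\|^2$ has a Beta distribution with parameters $\left(\tfrac{n-m}{2}, \tfrac{m+1}{2}\right)$; this follows by writing $y = g/\|g\|$ for a standard Gaussian $g$ on $\R^{n+1}$ and using the representation of Beta variables as ratios of sums of squares of independent Gaussians. Hence
\begin{equation*}
\mu(X_\e) = \P\left(\|P_{V^\perp} y\|^2 \le \sin^2\e\right) = I_{\sin^2\e}\!\left(\tfrac{n-m}{2}, \tfrac{m+1}{2}\right),
\end{equation*}
the regularized incomplete Beta function. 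With $m = \a n$, the parameter $\tfrac{n-m}{2} = \tfrac{(1-\a)n}{2}$, so the mean of the Beta variable is $\tfrac{(1-\a)n}{n+1} \to 1-\a$ and its variance is $O(1/n)$; thus the variable concentrates at $1-\a$.

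The conclusion then follows from this concentration: if $\sin^2\e > 1-\a$, i.e. $\e > \e(\a)$, the threshold $\sin^2\e$ lies strictly to the right of the concentration point, so the probability tends to $1$; if $\sin^2\e < 1-\a$, i.e. $\e < \e(\a)$, it lies strictly to the left and the probability tends to $0$. To make this rigorous I would use a Chernoff-type tail bound for the Beta distribution (or equivalently for sums of squares of Gaussians, via the $\chi^2$ representation), which gives exponential decay of $\P(|\,\|P_{V^\perp}y\|^2 - (1-\a)\,| > t)$ in $n$ for fixed $t>0$. The main thing to be careful about — though it is routine rather than hard — is handling the boundary: the reduction "$y\in X_\e$ iff $d(y,X)\le\e$" needs the observation that the set of $y$ equidistant-at-exactly-$\e$ has measure zero, and one should note that $\e(\a) < \pi/2$ so that $X_\e$ is genuinely a proper subset before the threshold. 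No single step is a serious obstacle; this is a known computation, and the only "work" is assembling the Beta/$\chi^2$ concentration estimate at the required precision.
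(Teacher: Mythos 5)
Your argument is correct, and in fact the paper offers no proof of this lemma at all: it is stated as a well-known estimate with a pointer to the literature (\cite{Artstein2002}), and the computation you give --- representing a uniform point of $S^n$ as a normalized Gaussian, so that $\|P_{V^\perp}y\|^2$ is $\mathrm{Beta}\bigl(\tfrac{n-m}{2},\tfrac{m+1}{2}\bigr)$, and then letting concentration of this variable around $1-\a$ decide which side of the threshold $\sin^2\e$ falls on --- is precisely the standard proof that the cited reference carries out (there even with sharp asymptotics for the cap measure, which you do not need). Two small remarks. First, your reduction to $\|P_{V^\perp}y\|^2\le\sin^2\e$ implicitly takes the distance in the definition of $X_\e$ to be geodesic; this is the reading consistent with the stated threshold $\sin\e(\a)=\sqrt{1-\a}$ (a chordal metric would give $\e(\a)^2=2(1-\sqrt{\a})$ instead), so you are fine, but it is worth saying explicitly since the paper is loose about the metric. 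Second, the boundary caveats you flag are not really needed: $X_\e$ is by definition the set of points at distance at most $\e$, so the identification with $\{d(y,X)\le\e\}$ is exact (only the measure-zero set $P_Vy=0$ needs a word), and Chebyshev already suffices for the ``tends to $0$ or $1$'' conclusion, with the Chernoff/$\chi^2$ bound an optional strengthening. Whether ``linear $m$-dimensional subspace'' means a subspace of dimension $m$ or the paper's $m$-dimensional linear subsphere (unit sphere of an $(m+1)$-dimensional subspace) only shifts a parameter by one and does not affect the limit.
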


\noindent This implies that for the second question we need $m$ to be at least $\a n$, where $\sqrt{1-\a}=\sin\e$, or $\a=\cos^2\e\approx 1-\e^2/2$. 

However, it is not obvious what this observation tells us about the first question, since these examples are linear subspheres, which are good sets to choose for the second question but the worst possible sets to choose for the first. That is, if we wish to find a topological subsphere $X$ of dimension $m$ such that $X_\e$ contains only a very small proportion of all linear $k$-dimensional subspaces, then we should take $X$ itself to be linear, but if we would like $X_\e$ to contain \emph{no} linear $k$-dimensional subspace, then obviously we cannot take $X$ to be linear (unless its dimension is less than $k$). 

The main result of this paper is that even when $k=1$, the dimension of $X$ can be quite large.

\begin{theorem} \label{tennisball} There exist constants $\a,\e>0$ such that for every $n$ there is an $\lfloor\a n\rfloor$-dimensional topological subsphere $X$ of $S^n$ such that $X_\e$ contains no linear subsphere of dimension 1. \end{theorem}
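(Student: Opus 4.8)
The plan is to build $X$ explicitly as the image of a carefully chosen odd map $f\colon S^m\to S^n$, where the target sphere is split into coordinate blocks so that $f$ ``uses up'' the first few coordinates topologically while keeping every point of the image far from any great circle. The obstruction to avoid is clear: a linear $1$-dimensional subsphere of $S^n$ is a great circle $\{(\cos t)u+(\sin t)v: t\in\R\}$ for an orthonormal pair $u,v$, and $X_\e$ contains it if and only if every point of that circle lies within $\e$ of $X$. So I want $X$ to be arranged so that no matter which orthonormal pair $(u,v)$ we pick, some point $(\cos t)u+(\sin t)v$ is at distance more than $\e$ from $X$; equivalently, writing $\langle\,\cdot\,,\,\cdot\,\rangle$ for the ambient inner product, for every $x\in X$ we should have $\langle x,(\cos t)u+(\sin t)v\rangle$ bounded away from $1$ for some $t$, uniformly over all great circles.

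First I would set up the coordinates. Write $\R^{n+1}=\R^{k+1}\oplus\R^{n-k}$ for a suitable fixed $k$ (here $k$ will be a small absolute constant, not $1$; the point of Theorem~\ref{tennisball} is $1$-dimensional subspheres, but I am free to make $n$ large in each block). On the $\R^{k+1}$ factor I take the standard sphere $S^k$, and I let $f$ restricted to an $S^m$ with $m$ comparable to $n$ map onto a product-like set of the form $(\cos\theta)\,S^k\times(\sin\theta)\,Y$, where $Y\subseteq S^{n-k-1}$ is some fixed ``generic'' odd subsphere and $\theta$ is a fixed angle bounded away from $0$ and $\pi/2$. The odd map $f$ is obtained by taking a join-type construction: $S^m=S^k*S^{m-k-1}$, the first factor maps identically onto the $S^k$ in $\R^{k+1}$, the second maps (oddly, using an iterate of the construction or simply a linear embedding composed with a small perturbation) into $Y$, and the join parameter is pinned at $\theta$. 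This makes $X=f(S^m)$ an $m$-dimensional topological subsphere since $f$ is continuous and odd, and $m\ge\alpha n$ for an absolute $\alpha>0$ provided $Y$ itself can be realised as the image of a sphere of dimension linear in $n-k$ — for which one may as well take $Y$ linear of dimension $\lfloor\alpha'(n-k)\rfloor$, or build it recursively.

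Next comes the geometric core: showing $X_\e$ contains no great circle. Fix a great circle $C$ with orthonormal spanning pair $(u,v)$. Every $x\in X$ has the form $x=(\cos\theta)\,p+(\sin\theta)\,q$ with $p\in S^k\subseteq\R^{k+1}$ and $q\in Y\subseteq\R^{n-k}$. Decompose $u=u_1+u_2$ and $v=v_1+v_2$ along $\R^{k+1}\oplus\R^{n-k}$. Then $\langle x,(\cos t)u+(\sin t)v\rangle=(\cos\theta)\langle p,(\cos t)u_1+(\sin t)v_1\rangle+(\sin\theta)\langle q,(\cos t)u_2+(\sin t)v_2\rangle$. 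I would argue by a case split on how the ``mass'' of $(u,v)$ distributes between the two blocks. If a definite fraction of the mass lies in the $\R^{k+1}$ block, then the first term is controlled by $\cos\theta<1$ times something of size at most the block's radius, so the inner product cannot reach close to $1$ for any $t$ — the factor $\cos\theta$ gives the needed slack. If instead almost all the mass lies in the $\R^{n-k}$ block, then $(u_2,v_2)$ is close to an orthonormal pair in $\R^{n-k}$ spanning a great circle in $S^{n-k-1}$, and now the obstruction is passed to $Y$: I need that $Y_{\e'}$ (in $S^{n-k-1}$) contains no great circle, which is the same statement one dimension down, so this is handled either by induction or, in the base case, by choosing $Y$ to be a sphere of dimension so much smaller than $n-k$ that a linear-subspace measure computation (the well-known Lemma quoted above, with $\alpha'$ small) forces $Y_{\e'}$ to miss most great circles, together with a separate argument — a covering/net bound over the compact manifold of great circles — that upgrades ``misses most'' to ``misses all'' after shrinking $\e'$ slightly.

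The main obstacle I anticipate is exactly that last upgrade, from a measure statement to a ``for all great circles'' statement, while keeping the dimension of $X$ linear in $n$. The manifold of great circles in $S^{n-k-1}$ is the Grassmannian-type space $V_2(\R^{n-k})/\mathrm{SO}(2)$, whose covering numbers grow like $e^{O(n\log(1/\delta))}$, so to beat this by a union bound I need the failure probability for a random great circle to be smaller than $e^{-cn}$ — which the concentration estimate does give, but only if the ``radius'' parameters (controlled by $\theta$ and by the dimension ratio $\alpha'$) are chosen with enough room to spare. Balancing these constants so that $\alpha$ stays positive is the delicate accounting, and I expect the bulk of the real work (and the only genuinely non-routine estimates) to be in that step; the topological part — that $f$ is odd and continuous with domain dimension $\ge\alpha n$ — is comparatively soft.
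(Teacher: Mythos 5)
Your construction cannot get off the ground for a topological reason. Every point of your product-like set $\{\cos\theta\,p+\sin\theta\,q:\ p\in S^k,\ q\in Y\}$ has component in the $\R^{k+1}$ block of norm exactly $\cos\theta>0$, so normalizing that component gives a well-defined continuous map from your set to $S^k$ that commutes with the antipodal maps. Composing it with your $f$ would produce a continuous odd map $S^m\to S^k$, which by the Borsuk--Ulam theorem forces $m\le k$. Since $k$ is a fixed constant, no topological subsphere of dimension $\lfloor\a n\rfloor$ can be contained in such a set: any set whose projection to a fixed low-dimensional coordinate block is bounded away from $0$ excludes high-dimensional subspheres. (The same obstruction is visible in the paper's proof of Corollary \ref{antiramsey}: an $m$-dimensional topological subsphere must meet every subspace of codimension at most $m$, in particular $\{x:x_1=\dots=x_{k+1}=0\}$, which your set avoids.) Separately, the ``join with the parameter pinned at $\theta$'' map is not even well defined at the poles of the join, where one of the two factors is undetermined; but the Borsuk--Ulam obstruction is the fatal point.

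The second gap is the base case of your recursion. You suggest one ``may as well take $Y$ linear of dimension $\lfloor\alpha'(n-k)\rfloor$'' and then upgrade ``$Y_{\e'}$ misses most great circles'' to ``misses all'' by a net over the space of circles. But if $Y$ is linear of dimension at least $1$, then $Y$ itself contains great circles, so $Y_{\e'}$ certainly contains them; no covering/union-bound argument can rescue a false statement, and indeed the passage from ``almost all circles'' to ``all circles'' is exactly the hard content of the theorem, which cannot be obtained from measure concentration alone. The paper handles this quantifier deterministically rather than probabilistically: it pushes a random linear subsphere of dimension $\lfloor\a n\rfloor$ through an odd bi-Lipschitz coordinatewise ``staircase'' map $\psi$, so that (with high probability) every image point has almost all of its weighted coordinate-ratio mass in a geometric-progression-like set $BB^{-1}$ (the set $\Gamma$), and then proves, for an \emph{arbitrary} $2$-dimensional subspace, writing its unit vectors as $y_i=a_i\sin(\theta+\phi_i)$, that some choice of $\theta$ makes a constant fraction of the ratio mass fall outside $B^2B^{-2}$; this rests on an anti-concentration estimate for $\cot\theta$ landing in a union of geometrically spaced intervals, not on a union bound over the Grassmannian. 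Your proposal is missing any mechanism of this kind, and without one the ``for all great circles'' conclusion is out of reach.
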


\noindent To put this less formally, there is a topological subsphere $X\subset S^n$ of dimension linear in $n$ such that the expansion $X_\e$ does not contain any 1-dimensional subsphere. 

We informally call such a topological subspace a \emph{tennis ball} because it brings to mind the seam of a genuine tennis ball (though the resemblance is not perfect, since the seam of a genuine tennis ball is not centrally symmetric). We shall also refer to 1-dimensional subspheres as \emph{great circles}.

Theorem \ref{tennisball} has a simple corollary that can be thought of as an anti-Ramsey theorem for linear subspheres.

\begin{corollary}\label{antiramsey}
There exist constants $\alpha,\eta>0$ such that for every $n$ there is a partition of $S^n$ into two subsets $R$ and $B$ such that $R_\eta$ does not contain the unit sphere of any subspace of codimension less than $\lfloor\alpha n\rfloor$ and $B_\eta$ does not contain the unit sphere of any 2-dimensional subspace.
\end{corollary}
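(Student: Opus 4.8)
The plan is to derive the corollary directly from Theorem~\ref{tennisball}. Let $\a,\e>0$ and, for each $n$, the $\lfloor\a n\rfloor$-dimensional topological subsphere $X\subseteq S^n$ be those supplied by the theorem, and put $\eta=\e/3$. I would take the partition $B=X_{2\eta}$ and $R=S^n\setminus X_{2\eta}$. The condition on $B$ then comes essentially for free: the triangle inequality for the geodesic metric gives $B_\eta=(X_{2\eta})_\eta\subseteq X_{3\eta}=X_\e$, and by Theorem~\ref{tennisball} the set $X_\e$ contains no $1$-dimensional linear subsphere, that is, no unit sphere of a $2$-dimensional subspace.

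For the condition on $R$, the key ingredient is a Borsuk--Ulam fact, in the spirit of the argument sketched in the introduction: if $f:S^m\to S^n$ is odd and $V\subseteq\R^{n+1}$ is a subspace with $\codim V\le m$, then $f(S^m)$ meets the unit sphere of $V$. Indeed, composing $f$ with the orthogonal projection of $\R^{n+1}$ onto $V^\perp$ gives an odd continuous map $S^m\to V^\perp\cong\R^{\codim V}$; regarding this as an odd map $S^m\to\R^m$ (legitimate since $\codim V\le m$) and applying the Borsuk--Ulam theorem to the resulting padded map produces a point $x$ with $f(x)\in V\cap S^n$. Taking $m=\lfloor\a n\rfloor$, this shows that $X$ meets the unit sphere of every subspace of codimension less than $\lfloor\a n\rfloor$.

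Now suppose, for contradiction, that $R_\eta$ contains the unit sphere of some subspace $V$ with $\codim V<\lfloor\a n\rfloor$; since $R_\eta$ then contains a nonempty set, $R$ itself is nonempty. Pick a point $p$ lying in both $X$ and this unit sphere, which is possible by the previous paragraph. As $d(p,R)\le\eta<2\eta$, there is $r\in R$ with $d(p,r)<2\eta$, whence $d(r,X)\le d(r,p)<2\eta$ and so $r\in X_{2\eta}$, contradicting $r\in R=S^n\setminus X_{2\eta}$. This yields both halves of the corollary, apart from the trivial small values of $n$ (for which $R$ may be empty but every assertion is vacuous), which one checks directly. I do not expect a genuine difficulty here; the one point needing care is to leave enough slack --- hence the choice $B=X_{2\eta}$ rather than $B=X_\eta$ --- so that the \emph{strict} inequality $d(p,r)<2\eta$ is available even though $d(p,R)$ is merely an infimum over~$R$.
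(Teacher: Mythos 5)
Your proposal is correct and follows essentially the same route as the paper: take $B$ to be an expansion of the tennis ball $X$ and $R$ its complement, handle $B$ by the triangle inequality for expansions, and handle $R$ by showing via the Borsuk--Ulam theorem (applied to the odd map $f$ composed with a projection killing the subspace) that $X$ meets the unit sphere of every subspace of codimension below $\lfloor\a n\rfloor$, while $R_\eta$ misses $X$. The only cosmetic difference is your extra slack ($\eta=\e/3$, $B=X_{2\eta}$, and the strict-inequality precaution), which the paper avoids by taking $B=X_{\e/2}$ and $\eta=\e/2$, since its expansions are defined with ``distance at most $\e$'' and so are closed.
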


\begin{proof}[Proof (assuming Theorem \ref{tennisball})]
Let $\a, \e$  and $X$ be as given by Theorem \ref{tennisball}.  Let $B=X_{\e/2}$,  and $R$ its complement on $S^n$.  Then since $X_\e$ does not contain a great circle,  every great circle contains a point that does not belong to $(X_{\e/2})_{\e/2}=B_{\e/2}$,  which implies that this point is at distance at least $\e/2$ from $B$.

It remains to prove that if $R=S^n\setminus B$ and $r< m$,  where $m=\lfloor \alpha n\rfloor$,  then $R_{\e/2}$ does not contain the sphere of a subspace of codimension $r$, since then we will be done with $\eta=\e/2$. Since $R_\e$ is disjoint from $X$, it is sufficient to prove that $X$ intersects every subspace of codimension $r$.

Let $V$ be such a subspace. We are given that $X$ is the image of some continuous odd function $f:S^m\to S^n$.  Hence,  after composing $f$ with a rotation, we may assume without loss of generality that $V=\{x:x_1=\dots=x_r=0\}$.  Let $P_r$ be the coordinate projection to the first r coordinates.  Then $P_r\circ f$ is a continuous map from $S^m$ to $\R^r$,  so by the Borsuk-Ulam theorem there exists $x\in S^m$ such that $P_r f(x)=P_r f(-x)$. Since $P_r\circ f$ is also odd,  it follows that $P_r f(x)=0$,  and therefore that $f(x)\in V$,  as claimed.
\end{proof}

We actually deduce Theorem \ref{tennisball} from a stronger result that trivially implies it and is of independent interest.

\begin{theorem} \label{main} There exist constants $\a,\e>0$ and a continuous map $\psi:S^n\to S^n$ such that $\psi$ preserves antipodal points and such that if $X$ is a random linear subsphere of dimension $\lfloor\a n\rfloor$ then with probability $1-o(1)$ the set $\psi(X)_\e$ does not contain a great circle.\end{theorem}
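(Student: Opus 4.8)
The plan is to construct $\psi$ explicitly as a perturbation of the identity that folds the sphere along a high-codimension equatorial subsphere, in such a way that every great circle in the domain is forced to pass near a region where $\psi$ collapses it. Concretely, I would look for $\psi$ of the form $\psi(x) = (g_1(x),\dots,g_{n+1}(x))/\|(g_1(x),\dots,g_{n+1}(x))\|$ where each $g_i$ is an odd continuous function, and where on a large-measure ``core'' band the map $\psi$ coincides with the antipodal-preserving identity, while near a fixed equator $E = \{x : x_1 = \dots = x_r = 0\}$ of small codimension $r = r(\e)$ the map pinches everything onto $E$ itself. The key point is that a random $\lfloor\a n\rfloor$-dimensional linear subsphere $X$, by the concentration estimate quoted as the ``well known'' lemma in the excerpt (with $\e(\a)$ determined by $\sin\e(\a)=\sqrt{1-\a}$), lies with probability $1-o(1)$ entirely within the $\e'$-neighbourhood of $E$ for a suitable $\e' < \e$ once $\a$ is chosen close enough to $1$; so on the relevant event $X$ itself is almost entirely controlled by the pinching part of $\psi$.

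The main steps, in order, are as follows. First, fix $r$ depending only on $\e$ and set up the ``tennis-ball'' template on $S^r \times S^{n-r}$-type coordinates: write $x = (\cos\theta\, u, \sin\theta\, v)$ with $u \in S^{r-1}$, $v \in S^{n-r}$, $\theta \in [0,\pi/2]$, and design $\psi$ to act by a $\theta$-dependent rotation that, as $\theta$ ranges over its values, sweeps out a seam-like set of small intrinsic dimension in the $v$-variable while remaining odd. Second, verify that $\psi(S^n)$, hence any subset of it, has the property that its $\e$-expansion contains no great circle: a great circle $C$ is a $2$-dimensional linear subsphere, so it meets $E$ (Borsuk-Ulam, exactly as in the proof of Corollary~\ref{antiramsey}), and by the construction of the seam the portion of $\psi(X)_\e$ lying near $E$ cannot contain two antipodal points of $C$ unless $X$ meets a further bad set that the randomness avoids. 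Third, run the probabilistic argument: by the concentration lemma, $X \subset E_{\e'}$ with probability $1-o(1)$; condition on this event and show that on it the geometry of $\psi(X)_\e$ near $E$ is exactly the low-dimensional seam, so it misses at least one point of every great circle. Finally, collect constants: choose $\a = \cos^2\e'$ (roughly), check $\lfloor\a n\rfloor \ge 1$, and confirm all the inequalities between $\e$, $\e'$, and the seam width are simultaneously satisfiable.

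The hard part will be the second and third steps together: designing the seam so that (a) it genuinely has the property that no great circle's $\e$-fattening is contained in it, (b) the obstruction set that the randomness must avoid really does have vanishing measure against random $\lfloor\a n\rfloor$-subspheres, and (c) the odd/antipodal-preserving constraint is maintained throughout — the cap example and the $A \cup (-A)$ example in the introduction show that naive symmetrizations destroy exactly the property we want, so the folding must be engineered to interact correctly with the antipodal map. I expect the cleanest route is to reduce (a) to a statement about the image of a single well-chosen odd map $S^{r-1}\to S^{n-r}$ (a genuinely lower-dimensional tennis ball in the fibre direction, built by an explicit coordinate formula rather than abstractly), and then lift it along the join-type decomposition of $S^n$; the measure estimate in (b) should then follow from the same spherical-cap computation that underlies Milman's proof of Dvoretzky's theorem, applied in the $(n-r)$-dimensional fibre.
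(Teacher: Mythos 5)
There is a genuine gap; in fact the two steps that are supposed to carry the proof are respectively false and missing. First, the probabilistic step is backwards: for a \emph{fixed} subspace $E$ of constant codimension $r$, a random $\lfloor\a n\rfloor$-dimensional subsphere $X$ does not lie in a small neighbourhood of $E$ when $\a$ is close to $1$ — the supremum of $|P_{E^\perp}x|$ over unit vectors $x\in X$ concentrates around $\sqrt{\a}$, so the containment $X\subset E_{\e'}$ forces $\a\lesssim(\e')^2$, i.e.\ $\a$ \emph{small}; moreover the quoted concentration lemma is about the measure of the expansion $X_\e$, not about uniform containment of $X$ near a fixed $E$, so the choice ``$\a=\cos^2\e'$'' cannot work. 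Second, the topological step is wrong as stated: a great circle is the unit sphere of a $2$-dimensional subspace, and the Borsuk--Ulam argument of Corollary \ref{antiramsey} only forces it to meet subspaces of codimension at most $1$; a generic great circle misses a codimension-$r$ subspace for every $r\ge 2$, which is exactly the regime your construction needs. Third, and most importantly, the entire content of the theorem — an odd map whose image in the relevant region has an $\e$-expansion containing no great circle — is deferred in your step (a) to ``a well-chosen odd map built by an explicit coordinate formula''; that is precisely the statement to be proved, and the proposal supplies no mechanism that forces every great circle to leave the expansion of the seam. The cap examples in the introduction show that ``low intrinsic dimension of the image'' alone is nowhere near sufficient, so naming the seam does not discharge the obligation.

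For comparison, the paper does not localize near any fixed equator. It applies a single coordinatewise odd ``staircase'' map $\varphi$ (normalized to give $\psi$) that sends vectors whose coordinates mostly lie in wide bands $A$ to vectors whose coordinate \emph{ratios} concentrate in a sparse multiplicative set $BB^{-1}$ (approximate powers of $\lambda$); the target set $\Gamma$ is defined by this ratio condition. Two separate arguments then do the work: along any great circle the coordinates are $a_i\sin(\theta+\phi_i)$, and a cotangent-distribution estimate (together with the fact that many phase pairs $\phi_i,\phi_j$ are well separated) shows that the ratios $y_i/y_j$ cannot mostly lie in a geometric-progression-like set, so every great circle contains a point far from $\Gamma$; and a covering/entropy argument shows that the set of vectors with many coordinates near powers of $\lambda$ has exponentially small measure, so a random subspace of small linear dimension avoids it and is mapped by $\psi$ into $\Gamma$. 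Any rescue of your plan would need a concrete substitute for this multiplicative incompatibility between sinusoids and geometric progressions; the join/fibre decomposition by itself does not provide one.
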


\iftrue
\else
\subsection{Questions related to a question of Milman}

Milman's proof of Dvoretzky's theorem led to an explosion of activity in the theory of finite-dimensional normed spaces and to many striking results about the subspace structure of a normed space, often with surprisingly weak hypotheses on the space. Most of these results concerned the subspaces and not their relationship with the space itself, but information about the latter can be very helpful. In particular, if $X$ is a space, $Y$ is a subspace of $X$ and there is a projection from $X$ to $Y$ with bounded operator norm, then we can write $X=Y+Z$ and the norm of a vector $y+z$ is approximated to within a constant factor by $\|y\|+\|z\|$. Such subspaces are called \emph{complemented}, or more precisely $C$-\emph{complemented} if the projection has norm at most $C$.

Our knowledge about which subspaces a normed space must have under certain conditions is less satisfactory if we impose the condition that they should be complemented, and there are several open problems. For example, it is not known whether there is a constant $C$ and a function $f:\N\to\N$ that tends to infinity such that every $n$-dimensional normed space has a $C$-complemented subspace of dimension and codimension at least $f(n)$. (For a partial result in this direction, see \cite{szarektomczak2009}.)

Another open problem is the following question of Milman, which was the starting point for the work of this paper. (The question does not seem to have appeared in print, but he has mentioned it in conversation to many people, including the authors of this paper.) We write $|\cdot|$ for the standard Euclidean norm on $\R^n$. Recall that the \emph{Banach-Mazur distance} between two normed spaces $X$ and $Y$ is the infimum of $\|T\|\|T^{-1}\|$ over all invertible continuous linear maps from $X$ to $Y$, or equivalently the infimum over all $C$ such that there exists an invertible linear map $T:X\to Y$ such that $\|x\|\leq\|Tx\|\leq C\|x\|$ for every $x\in X$.

\begin{question}\label{milman} Let $\e>0$, $C\geq 1$ and $k\in\N$. Does there exist $n$ such that if $\|\cdot\|$ is a norm on $\R^n$ such that $|x|\leq\|x\|\leq C|x|$ for every $x\in\R^n$, then the space $(\R^n,\|\cdot\|)$ has a subspace $Y$ of dimension $k$ such that $d(Y,\ell_2^k)\leq 1+\e$ and $Y$ is $(1+\e)$-complemented?\end{question}

Without the requirement that $Y$ should be $(1+\e)$-complemented the answer is yes, and for fixed $\e,C$ the dependence of $n$ on $k$ is linear, as we have already remarked. However, the additional requirement changes everything, and this problem is unsolved even when $k=2$, and even if we do not ask for any quantitative information about $n$. The assumption that the norm is $C$-equivalent to the Euclidean norm is a strong one, but at least some kind of assumption is needed, since for a general $n$-dimensional normed space \textcolor{blue}{it can be shown that Question \ref{milman} has a negative answer.} \kasia{I feel like we were avoiding such a statement... Are you happy with it?}

The proof when $Y$ is not required to be complemented actually yields a stronger statement: not only is $d(Y,\ell_2^n)\leq 1+\e$ but the linear map that witnesses that fact is the identity. That is, we can find $Y$ and a positive real number $\a$ such that $\a|y|\leq\|y\|\leq(1+\e)\a|y|$ for every $y\in Y$. Loosely speaking, we can find a $(k-1)$-dimensional linear subsphere of $S^{n-1}$ on which the function $\|\cdot\|$ is approximately constant. Also loosely speaking, whereas the statement that $d(Y,\ell_2^k)\leq 1+\e$ implies that the unit sphere of $Y$ is approximately a $(k-1)$-dimensional ellipsoid, the proof yields a subspace such that the unit sphere is approximately a $(k-1)$-dimensional sphere. (It is not hard to prove that a $k$-dimensional ellipsoid has a $\lfloor k/2 \rfloor$-dimensional spherical cross section, so this is not a significant strengthening, but Milman's argument gives spherical cross sections directly.)

If the subspace $Y$ is such that $d(Y,\ell_2^k)\leq 1+\e$, then it is called $(1+\e)$-\emph{Euclidean}. If $Y$ has the stronger property, i.e. there is some constant $\alpha>0$ such that $\alpha |y|\leq \|y\|\leq (1+\e)\alpha|y|$ for all $y\in Y$, let us call it \emph{strongly $(1+\e)$-Euclidean}.

We can similarly strengthen the property of being complemented: let us say that a subspace $Y$ of $(\R^n,\|\cdot\|)$ is \emph{strongly $(1+\e)$-complemented} if not just some projection but the orthogonal projection onto $Y$ has norm at most $1+\e$. 

The following question is a natural strengthening of Milman's original question.

\begin{question}\label{strongmilman} Let $\e>0$, $C\geq 1$ and $k\in\N$. Does there exist $n$ such that if $\|.\|$ is a norm on $\R^n$ such that $|x|\leq\|x\|\leq C|x|$ for every $x\in\R^n$, then the space $(\R^n,\|.\|)$ has a subspace $Y$ of dimension $k$ that is strongly $(1+\e)$-Euclidean and strongly $(1+\e)$-complemented?\end{question}

\noindent Our interest in this stronger question is that it seems quite unlikely that Milman's question would have a positive answer unless this question also has a positive answer, and for various reasons this question is a little more approachable.

We shall now give a simple reformulation of the condition that $Y$ is strongly $(1+\e)$-Euclidean and strongly $(1+\e)$-complemented. We begin with another definition. As before, $|\cdot|$ is the standard Euclidean norm on $\R^n$, and $\langle\cdot , \cdot \rangle$ is the standard inner product. 

\begin{definition}\label{egooddef} Let $X=(\R^n,\|\cdot\|)$ be a normed space and let $x\in X$. We say that $x$ is $\e$-\emph{good} if 
\[\langle x,y\rangle\leq(1+\e)\frac{\|y\|}{\|x\|}|x|^2,\]
for every vector $y\in\R^n$.\end{definition}

To see what this means geometrically, consider the orthogonal projection $P_x$ onto the 1-dimensional subspace of $\R^n$ generated by $x$. Writing $x'$ for the normalized vector $x/|x|$, this has the formula
\[P_xy=\langle x',y\rangle x'.\]
Hence, the operator norm of $P_x$ (as a map from $X$ to $X$) is the maximum of the quantity
\[\frac{\langle x',y\rangle\|x'\|}{\|y\|}=\frac{\langle x,y\rangle\|x\|}{|x|^2\,\|y\|}\]
over all $y\in\R^n$. It follows that $x$ is $\e$-good if and only if the orthogonal projection $P_x$ has norm at most $1+\e$ in the space $L(X)$.  Clearly, the definition of $\e$-good point does not depend on the norm of the vector and  hence it is enough to consider unit vectors. Let us write $S^n=\{x\in \R^{n+1}: |x|=1\}.$

We now show that a subspace $Y$ of a space $X$ is strongly $(1+\e)$-Euclidean and strongly $(1+\e)$-complemented for some small $\e$ if and only if every $y\in Y$ is $\d$-good for some small $\d$.

\begin{lemma}\label{lem:good=compl+eucl}
Let $X=(\R^n,\| \cdot \|)$ be a normed space and let $Y\subset X$ be a subspace. Then
\begin{enumerate}
\item if $Y$ is strongly $(1+\e)$-complemented and strongly $(1+\e)$-Euclidean, then every $y\in Y$ is $(2\e+\e^2)$-good;
\item if $\e\leq 1/9\pi^2$ and every point in $Y$ is $\e$-good, then $Y$ is strongly $(1+\e)$-complemented and strongly $(1+3\pi\sqrt{\e})$-Euclidean. 
\end{enumerate}
\end{lemma}

\begin{proof}
    Let $P_Y$ be the orthogonal projection onto $Y$. If $Y$ is strongly $(1+\e)$-Euclidean and strongly $(1+\e)$-complemented, then $\|P_Y x\| \le (1+\epsilon)\|x\|$ for every $x \in X$ and there exists $\lambda\in \mathbb{R}$ such that $ \lambda |y| \le \|y\| \le (1+\epsilon) \lambda |y|$ for every $y \in Y$. From this it follows that for every $y \in Y$ and every $x \in X$ we have 
    \[\left\langle y,x \right\rangle = \left\langle y,P_Y x \right\rangle  \le |y|\, |P_Y x| \le |y|\, \frac{1}{\lambda}  \| P_Y x\| \le (1+\epsilon) \lambda \frac{|y|^2}{\|y\|} \frac{1}{\lambda} (1+\epsilon) \|x\|  = (1+\epsilon)^2 \frac{\|x\|}{\|y\|}|y|^2, \]
    which implies that every point $y$ in $Y$ is $(2\epsilon + \epsilon ^2)$-good, as claimed.
    
    Conversely, assume that every point in $Y$ is $\epsilon$-good, so that for every $y \in Y$ and every $x \in X$ we have the inequality
    \[\langle y,x \rangle \le (1+\epsilon) \frac{\|x\|}{\| y\|} |y|^2.\]
    Choose $x\in X$. Then $P_Y x\in Y$, so
    \[|P_Yx|^2=\langle P_Yx,P_Yx\rangle=\langle P_Yx,x\rangle\leq (1+\e)\frac{\|x\|}{\|P_Yx\|} |P_Yx|^2,\]
and therefore $\|P_Yx\|\leq(1+\e)\|x\|$. It follows that $Y$ is strongly $(1+\e)$-complemented.   
    
     Now assume for a contradiction that the subspace $Y$ is not strongly $(1+a)$-Euclidean with $0<a$. In particular, this means that we can find two unit vectors $y,w \in Y$ such that $\|y\| = \|w\|(1 +a)$. Without loss of generality we may assume that in the last equality we have $a\le 1/2$, because if we can find unit vectors $y,w \in Y$ such that $\|y\| = \|w\|(1 +a)$ then we can find $y'$ such that $\|y'\| = \|w\|(1 +a')$ for any $a'\le a$. 
     
    Let us consider a sequence of unit vectors $w=x_0, \,x_1 , \ldots , x_{m-1},\, x_m=y$ that are equally spaced along the shortest arc that joins $w$ to $y$ (which is unique, since $w$ cannot equal $-y$). By the pigeonhole principle there exists $i$ such that \[\|x_i\|(1+a)^{1/m} \le \|x_{i+1}\|.\]
         We shall choose $m$ in a way which ensures that $x_i$ is a witness for $x_{i+1}$ not being $\e$-good. Indeed, if we assume that $m$ is at least $3\pi^2/a$ 
         then since the angle between $x_i$ and $x_{i+1}$ is at most $\pi/m$ we get that
\begin{align*}
    \langle x_{i+1},x_i\rangle\frac{\|x_{i+1}\|}{\|x_i\|\,|x_{i+1}|^2}&\geq\cos ( \angle x_i x_{i+1})(1+a)^{1/m}\geq\Bigl(1-\frac{\pi^2}{2m^2}\Bigr)\Bigl(1+\frac am - \frac{a^2}{2m}\Bigr)\\ &\geq1+\frac am-\frac{a^2m^2+\pi^2m+\pi^2a}{2m^3}\geq1+\frac a{2m}.
\end{align*}
    We used the fact that for $0<k<1$ and $a>0$ we have that $(1+a)^k \ge 1+ak-a^2\frac{k(1-k)}{2}\ge  1+ak-\frac{a^2 k}{2}$, and assumptions that $0<a\le 1/2$ and $m\ge 3\pi^2/a$.
    
    It follows that the point $x_{i+1}$ is not $\frac a{2m}$-good. Therefore, if every point is $\e$-good, we must have that $\frac a{\lceil 6\pi^2/a\rceil}\leq\e$, which implies that $a\leq 3\pi\sqrt{\e}$. Thus, we find that $Y$ is strongly $(1+3\pi\sqrt{\e})$-Euclidean, which completes the proof. \end{proof}

In the light of this lemma, we see that Question \ref{strongmilman} is equivalent to the following question.

\begin{question}\label{egoodquestion} Let $\e>0$, $C\geq 1$ and $k\in\N$. Does there exist $n$ such that if $\|\cdot\|$ is a norm on $\R^n$ such that $|x|\leq\|x\|\leq C|x|$ for every $x\in\R^n$, then the space $(\R^n,\|\cdot\|)$ has a subspace $Y$ of dimension $k$ such that every $y\in Y$ is $\e$-good?\end{question}
    
We now reformulate the definition of an $\e$-good point so that it can be applied not just to norms but to more general functions defined on $S^{n-1}$, with the aim of finding a generalization of Question \ref{strongmilman} that does not rely on convexity.

Before we state the result, let us recall that a \emph{support functional} of a norm $\|\cdot\|$ at $x$ is any non-zero linear functional $f$ such that for every $y$ with $\|y\|\le \|x\|$ we have $f(y)\leq f(x)$.
Note that if the norm is differentiable, then writing $f(x)$ for $\|x\|$, we have that any multiple of $f'(x)$ is a support functional at $x$.

\begin{proposition}\label{goodpoints}
Let $(X,\|\cdot\|)$ be a normed space and suppose that $|x|\leq\|x\|\leq C|x|$ for every $x\in X$. For every $\d>0$ there exists $\e>0$ such that if $x\in X$ is any  $\e$-good point, then there exist $y,z$ such that $|x|=|y|$, $|x-y|<\d|x|$, $z$ is a support functional for $y$, and $|y-z|<\d|x|$. Conversely, for every $\e>0$ there exists $\d>0$ such that $x$ is an $\e$-good point if there exist $y,z$ such that $|x-y|<\d|x|$, $z$ is a support functional for $y$, and $|y-z|<\d|x|$.
\end{proposition}

\begin{proof}
We shall do the second part first. Let $0<\e\leq 1$ and suppose that there exist $y,z$ such that $z$ is a support functional for $y$, and $|y-x|$ and $|z-y|$ are both at most $\d |x|$.

Now let $w\in X$. Then
\[\langle w,x\rangle\leq\langle w,y\rangle+\d|w||x|\leq\langle w,z\rangle+2\d|w||x|.\]
But $z$ is a support functional for $y$, so 
\[\langle w,z\rangle\leq\|w\|\,\|z\|^*=\|w\|\frac{\langle y,z\rangle}{\|y\|}\]
We also have that
\[\|y\|\geq\|x\|-C|x-y|\geq\|x\|-C\d|x|\geq(1-C\d)\|x\|.\]
Finally, since $|x-z|\le 2\d |x|$ we have
\[\langle y,z\rangle\leq\langle x,z\rangle+\d|x||z|\leq|x|^2+2\d|x|^2+\d|x|^2(1+2\d)\leq(1+\d)(1+2\d)|x|^2.\]
Putting all this together, we find that
\[\langle w,x\rangle\leq\frac{(1+\d)(1+2\d)}{1-C\d}\frac{\|w\|}{\|x\|}|x|^2+2\d|w||x|\leq\Big(\frac{(1+\d)(1+2\d)}{1-C\d}+2C\d\Big)\frac{\|w\|}{\|x\|}|x|^2.\]
It can be checked that if we set $\d=\e/5C$, then the factor in brackets is at most $1+\e$.

For the other direction, assume that for all $y$ such that $|y|=|x|$ and $|x-y|<\delta |x|$ we have that $|y-z|>\delta|x|$, where $z$ is the support functional at $y$. We will choose $z$ such that $|z|=|y|$.

We can assume that $|x|=1$ and that for every unit vector $y$ with $|y-x|<\d$, we have that $|y-z|\geq\d$. It follows that the component of $z$ orthogonal to $y$ has size at least $\d/2$, which comes from the fact that we chose $z$ with $|z|=|y|=1$ and hence $\sp{z,y}=1-|z-y|^2/2 \le 1-\d^2/2$ and $|z-\sp{y,z}y|^2=1-\sp{y,z}^2$. Further, we have that $|z|\geq 1$ for every $y$ (since $\|y+w\|=\|y\|+\|w\|\geq\|y\|+|w|$ when $w$ is a positive multiple of $y$), so for every unit vector $y$ in the $\delta$-neighborhood of $x$, the component of $z$ orthogonal to $y$ also has size at least $\d/2$.

It follows that for any $\gamma<\delta$  we can find a path on the unit sphere 
that starts at $x$ and ends at a point at distance at least $\g$ from $x$ such that the norm $\|\cdot\|$ decreases at a rate of at least $\d/2$ along the path. This gives us a unit vector $\bar{y}$ such that $|\bar{y}-x|\leq\g$ and \[\|\bar{y}\|\leq\|x\|-\g\d/2\leq\|x\|(1-\g\d/2C).\]

It follows that $\langle x,\bar{y}\rangle> 1-\gamma^2/2$, so
\[\langle x,\bar{y}\rangle>\frac{(1-\gamma^2/2)}{(1-\g\d/2C)}\frac{\|\bar{y}\|}{\|x\|}|x|^2.\]
Setting $\g=\d/2C$, we deduce that $x$ is not $\d^2/8C^2$-good.
\end{proof}

As mentioned before, if $f(x)=\|x\|$ is a differentiable function then $f'(x)$ is a multiple of the support functional at $x\in S^{n}$. A byproduct of the second part of the proof is that we may take $z=\frac{f'(y)}{|f'(y)|}$. Therefore, up to the dependence between $\e$ and $\d$, we have that $x\in S^n$ is $\e$-good if and only if there exists $y\in S^n$ with $|x-y|<\delta$ and $|y-\frac{f'(y)}{|f'(y)|}|<\delta$. 

Note that the latter condition is equivalent to $|f'(y)|(1-\frac{\delta^2}{2})< \sp{y,f'(y)}$ and since we have that $1\le |f'(x)|\le C$, we get
\[   |P_{y^\perp} f'(y)|= |f'(y)-\sp{y,f'(y)}y| \le C(\d^2-\tfrac{\d^4}{4})^{1/2}\le C \d,
\] where $P_{y^\perp}$ denotes the orthogonal projection onto the hyperplane orthogonal to $y$.
Similarly, one can easily find that the condition on $|P_{y^\perp} f'(y)|$ being small implies that $|y-\frac{f'(y)}{|f'(y)|}|$ is small (with some slightly changed constants). Therefore we get that for every $\delta$ there exists $\e$, such that if a point $x\in S^n$ is $\e$-good then there exists $y\in S^n$ with $|x-y|<\delta$ and $|P_{y^\perp} f'(y)|<\delta$.

Since it is enough to look only at unit vectors, if we consider a restriction of $f$, which is the function from the sphere (and call it again $f:S^n\to \R$), then the above condition means that the norm of the gradient of $f$ at $y$ must be small. For simplicity, instead of $\nabla_{S^n}f$, we will write $f'$ for the gradient of a function from the sphere.

This motivates the following definition of a good$^*$ point for any differentiable function: 

\begin{definition}\label{def: good* points}
Let $f:S^n\to \R$ be a differentiable function. We shall say that $x\in S^n$ is \textsl{$\d$-good$^*$} if there exists $y\in S^n$ with $|x-y|<\delta$ and $|f'(y)|<\delta$.
\end{definition}
 Clearly, due to Lemma \ref{goodpoints} and the above remarks, we get that if $f$ is a restriction of a norm, then definitions \ref{def: good* points} and \ref{egooddef} are equivalent (up to the dependence between $\e$ and $\delta$). 

We now note a simple fact about $\delta$-good$^*$ points.

\begin{lemma}\label{bilipschitz}
Let $f:S^n\to\R$ be a differentiable function and let $x\in S^n$ be a $\delta$-good$^*$ point for $f$. Then if $\psi:S^n\to S^n$ is an invertible differentiable function such that both $\psi$ and $\psi^{-1}$ have Lipschitz constant at most $\a$, then $\psi^{-1}(x)$ is an $\a\delta$-good$^*$ point for $f\circ\psi$.
\end{lemma}

\begin{proof}
Assume that $\psi^{-1}(x)$ is not $\alpha \delta$-good$^*$ for $f\circ \psi$. For every $y\in S^n$ such that $d(x,y)<\delta$ we have that $d(\psi^{-1}(x),\psi^{-1}(y))\le \alpha \delta$ and it follows that $ |(f\circ \psi)'(\psi^{-1}(y))|\ge \alpha \delta$. But $(f\circ\psi)'(\psi^{-1}(y))=\psi'(\psi^{-1}(y))^*(f'(y))$, and by the bi-Lipschitz property of $\psi$, this has magnitude at most $\a|f'(y)|$. Hence, we get that $|f'(y)|\ge \delta$, which means that $x$ is not $\delta$-good$^*$ and the result follows.
\end{proof}

We conclude this section with a further question related to Milman's question.

\begin{question}\label{lastversion} Let $\delta>0$ and let $k\in\N$. Does there exist $n$ such that if $f:S^n\to[0,1]$ is any differentiable even function, then there exists a subsphere $S_Y$ of $S^n$ of dimension $k$ that consists only of $\delta$-good$^*$ points?
\end{question}

Note that when analyzing the connection between the above question and Question \ref{strongmilman} one needs to take into account the dependence of $\delta$ and $\epsilon$ explained in Proposition \ref{goodpoints}. Nevertheless, if the answer to the Question \ref{lastversion} is yes (or if it is yes under the additional assumption that $f$ is a Lipschitz function), then also the answer to the ``strong'' Milman's question \ref{strongmilman} is yes.

\subsection{The connection between Milman's question and tennis balls}\label{connection}

Suppose we wish to find a counterexample to Question \ref{strongmilman}. We may as well ask for the norm to be differentiable, and then Proposition \ref{goodpoints} tells us that a point $x$ will be $\e$-good if it is close to a point where the derivative is small. This implies that the set of points with small derivative must have very small measure, since otherwise by measure concentration its $\e/2$-expansion will have measure very close to 1 and a random $k$-dimensional subspace will with high probability have an $\e/2$-net contained in this $\e/2$-expansion, and will therefore live inside the $\e$-expansion.

This kind of observation already rules out many potential methods of constructing counterexamples. For instance, if one chooses a random collection of $N$ unit vectors $u_1,\dots,u_N$ for appropriate $N$ and defines $\|x\|$ to be $\max_i|\langle x,u_i\rangle|$, then for almost all $x\in S^n$ the value of $\|x\|$ will be close to its minimum, which implies that $x$ is $\e$-good. 

However, there do exist norms that are $C$-equivalent to the Euclidean norm and have the property that almost all points are $\e$-bad. A simple example of such a norm is the weighted $\ell_2$-norm given by the formula
\[\|x\|^2=\sum_{i\leq n/2}2x_i^2+\sum_{i>n/2}x_i^2.\]
Letting $A$ be the diagonal matrix with the first $n/2$ entries equal to $2$ and the rest equal to $1$, we can write the right-hand side as $\langle x,Ax\rangle$. When $x\ne 0$, the derivative of this norm at $x$ is $Ax/\|x\|$, or if we regard the norm as a function defined on $S^{n-1}$, it is the projection of $Ax/\|x\|$ on to the subspace orthogonal to $x$. Thus, a point $x$ is 0-good if and only if $x$ is an eigenvector of $A$, which is the case if and only if it belongs to one of the two eigenspaces $\langle e_1,\dots,e_{n/2}\rangle$ or $\langle e_{n/2+1},\dots,e_n\rangle$. It is a straightforward exercise to prove the more precise result that for every $\eta>0$ there exists $\e>0$ such that $x$ is $\e$-good only if the distance from $x$ to one of these two subspaces is at most $\eta$. Since the set of such $x$ has exponentially small measure, we have an example of a norm where it is not the case that almost all points are $\e$-good. 

From the perspective of Milman's question, this may seem a dubious example, since the space is isometric to a Euclidean space, and therefore as far from a counterexample as it is possible to be. But as we shall see later there are examples to which this criticism does not apply. 

However, the main point we wish to make here is that even this example gives us a strategy for building a counterexample to Question \ref{lastversion}. Let $S_1$ and $S_2$ be the unit spheres of the two eigenspaces above, and suppose that we can find a function $\psi$ satisfying the conditions of Lemma \ref{bilipschitz} such that $\psi(S_1)$ and $\psi(S_2)$ are both tennis balls. By Lemma \ref{bilipschitz} there will exist $\e>0$ independent of $n$ such that every point that is at least $\e$ away from $\psi(S_1)\cup\psi(S_2)$ is not $\e$-good$^*$. Since $\psi(S_1)$ and $\psi(S_2)$ are tennis balls, there is also $\eta>0$ such that the $\eta$-expansions of $\psi(S_1)$ and $\psi(S_2)$ do not contain the spheres of any 2-dimensional subspaces. Also, if any such sphere contains a point close to $\psi(S_1)$ and a point close to $\psi(S_2)$, then since those sets are far apart (by the bi-Lipschitz property of $\psi$), it must also contain points far from both sets. Therefore, the function given by the composition $x\mapsto \|\psi^{-1}(x)\|$ is a counterexample.

Our main theorem (Theorem \ref{main}) does something very similar to this. However, since the map $\psi$ provided by the theorem creates $\lfloor\a n\rfloor$-dimensional tennis balls with $\a$ significantly smaller than 1/2, we replace the diagonal map $A$ above by a map that has roughly $\a^{-1}$ eigenspaces of dimension roughly $\a n$ with well-separated eigenvalues. In that way, we do indeed obtain a counterexample to Question \ref{lastversion}.  
\fi

\section{The construction}

Throughout this note we shall write $\|\cdot\|$ for the norm given by the formula 
\begin{align}\label{L2 norm}
    \|x\|^2=n^{-1}\sum_{i=1}^nx_i^2.
\end{align}
The advantage of the factor $n^{-1}$ on the right-hand side is that a typical coordinate of a random vector of norm 1 has order of magnitude 1 rather than order of magnitude $n^{-1/2}$. This norm is often called the $L_2^n$ norm on $\R^n$, and we write $L_2^n=(\R^n, \|\cdot\|)$.  It is the Euclidean norm most commonly used in additive combinatorics. Following the standard terminology in that field, we shall sometimes write the right-hand side of the formula above as~$\E_ix_i^2$. Moreover, from this point on we shall think of spheres concretely,  so $S^{n-1}$ will denote the unit sphere of $L_2^{n}$.

\subsection{The tennis ball map}\label{tennisballmap}

We are aiming to prove Theorem \ref{main}, or in other words to prove that there exists a continuous map (in fact it will be bi-Lipschitz) $\psi:S^{n-1}\to S^{n-1}$ that preserves antipodal points, with the property that if $X$ is a random $\lfloor\a n\rfloor$-dimensional subsphere of $S^{n-1}$, then with high probability $\psi(X)_\e$ contains no linear subsphere of dimension 1. We shall achieve this by identifying a set $\Gamma\subset S^{n-1}$ such that with high probability $\psi(X\cap S^{n-1})\subset\Gamma$, or equivalently $X\cap S^{n-1}\subset\psi^{-1}(\Gamma)$, and such that every great circle contains a point that does not belong to $\Gamma_\e$.

These properties are clearly in tension with each other: we need $\Gamma$ to have small measure, or else its expansion $\Gamma_\e$ will contain a great circle, but on the other hand we also need $\psi^{-1}(\Gamma)$ to have measure very close to 1, or else it will not contain almost all $\lfloor\a n\rfloor$-dimensional linear subspheres.

In order to resolve this tension, we define a map that takes ``typical'' vectors to highly ``atypical'' vectors. Let $k$ be a large positive integer to be chosen later, let $\lambda>1$, and define $s=\lambda^{1/2k}$. (The parameter $\lambda$ will later be chosen to be 4, but we write most of the arguments in slightly greater generality in order to emphasize a certain flexibility in our construction and to make the role of this parameter more explicit.)

\begin{definition}
Let $\v:\R\to\R$ be a strictly increasing continuous odd function such that for every integer $m$ we have
\[\v(s^{2mk-1})=s^{2mk+1}  \ \text{ and } \  \v(s^{2mk+1})=s^{2(m+1)k-1} \]
and such that $\varphi$ is linear on $(s^{2mk-1},s^{2mk+1})$ and $(s^{2mk+1},s^{2(m+1)k-1})$ for every $m$.


When $x=(x_1,\dots,x_n)$ is a vector in $\R^n$ we shall abuse notation by writing $\v(x)$ to denote the vector $(\v(x_1),\dots,\v(x_n))$. The \emph{tennis ball map} is a function $\psi:S^{n-1}\to S^{n-1}$,  which is a normalized version of $\v$, given by the formula
\[\psi(x)=\frac{\v(x)}{\|\v(x)\|}.\]
\end{definition}

Note that the graph of $\v$ has a kind of staircase shape with steps of sizes that grow exponentially (see Figure \ref{fig: phi}). Indeed, as $x$ increases from $s^{2mk-1}$ to $s^{2mk+1}$, which is only a small change proportionately speaking, $\v(x)$ increases from $s^{2mk+1}$ to $s^{2(m+1)k-1}$, which is an increase by a factor of almost $\lambda$. Similarly, as $x$ increases from $s^{2mk+1}$ to $s^{2(m+1)k-1}$, which is about $\lambda$ times as big, $\v(x)$ increases from $s^{2(m+1)k-1}$ to $s^{2(m+1)k+1}$, which is only a small increase.

\begin{figure}
\begin{center}
\begin{tikzpicture}[yscale=0.5, xscale=0.8,cap=round,
tangent/.style={%
in angle={(180+#1)} ,
Hobby finish ,
designated Hobby path=next , out angle=#1,
},>=stealth]

\draw[->,thick] (-6,0)--(6,0) node[right]{$x$};
\draw[->,thick] (0,-5)--(0,5) node[above]{$y$};

\draw (0,0)--(0.01719,0.0568);
\draw (0.01719,0.0568)--(0.0568,0.06875);
\draw (0.0568,0.06875)--(0.06875,0.227);
\draw (0.06875,0.227)--(0.227,0.275);
\draw (0.227,0.275)--(0.275,0.91);
\draw (0.275,0.91)--(0.91, 1.1);
\draw (0.91, 1.1)--(1.1,3.64);
\draw (1.1,3.64)--(3.64,4.4);
\draw (3.64,4.4)--(3.7,5);

\draw [ultra thick, blue] (4.4,0)--(5.9,0);
\draw [ultra thick, blue] (3.64,0)--node [midway,below] { $A$}(1.1,0);
\draw [ultra thick, blue] (0.275,0)--(0.91,0);
\draw [ultra thick, blue] (0.06875,0)--(0.227,0);

\draw [ultra thick, blue] (-4.4,0)--(-5.9,0);
\draw [ultra thick, blue] (-3.64,0)--(-1.1,0);
\draw [ultra thick, blue] (-0.275,0)--(-0.91,0);
\draw [ultra thick, blue] (-0.06875,0)--(-0.227,0);

\draw[dotted] 
    (0,4.4)  -- (3.64,4.4) -- (3.64,0) node[below]{$ $}
    (0,3.64)  -- (1.1,3.64) -- (1.1,0) node[below]{$ $}  
    (0,1.1)  -- (0.91, 1.1) -- (0.91,0) node[below]{$ $}
    (0,0.91) -- (0.275,0.91) -- (0.275,0) 
    (0,0.275)  -- (0.227,0.275) -- (0.227,0) 
    (4.4,0) -- (4.4,5) ;

\draw (0,0)--(-0.01719,-0.0568);
\draw (-0.01719,-0.0568)--(-0.0568,-0.06875);
\draw (-0.0568,-0.06875)--(-0.06875,-0.227);
\draw (-0.06875,-0.227)--(-0.227,-0.275);
\draw (-0.227,-0.275)--(-0.275,-0.91);
\draw (-0.275,-0.91)--(-0.91, -1.1);
\draw (-0.91, -1.1)--(-1.1,-3.64);
\draw (-1.1,-3.64)--(-3.64,-4.4);
\draw (-3.64,-4.4)--(-3.7, -5);

\draw [ultra thick,red] (0,3.64)--node [midway,left] { $B$}(0,4.4);
\draw [ultra thick,red] (0,0.91)--(0,1.1);
\draw [ultra thick,red] (0,0.227)--(0,0.275);
\draw [ultra thick,red] (0,-3.64)--(0,-4.4);
\draw [ultra thick,red] (0,-0.91)--(0,-1.1);
\draw [ultra thick,red] (0,-0.227)--(0,-0.275);

\draw[dotted] 
    (0,-4.4)  -- (-3.64,-4.4) -- (-3.64,0) 
    (0,-3.64)  -- (-1.1,-3.64) -- (-1.1,0)   
    (0,-1.1)  -- (-0.91,- 1.1) -- (-0.91,0) 
    (0,-0.91) -- (-0.275,-0.91) -- (-0.275,0)   
    (0,-0.275)  -- (-0.227,-0.275) -- (-0.227,0)
    (-4.4,0) -- (-4.4,-5) ;

\end{tikzpicture}\caption{The ``staircase function" $\v$. } \label{fig: phi}
\end{center} \end{figure}

In particular, writing $A_m$ for the ``wide'' interval and $B_m$ for the ``narrow'' interval, i.e.
\begin{align}\label{sets Am and Bm}
    A_m=[s^{2mk+1},s^{2(m+1)k-1}] \ \ \ \text{ and } \ \ \  B_m=[s^{2mk-1},s^{2mk+1}]
\end{align}
 we have that \[\v(A_m)=B_{m+1}\] for every $m$. Moreover, if by $A$ we denote the union $\bigcup_m(A_m\cup(-A_m))$ and by $B$ the union $\bigcup_m(B_m\cup(-B_m))$, then we also have that $\v(A)=B$.

Note that if $x,y\in B$, then $xy^{-1}$ belongs to an interval of the form $[s^{2mk-2},s^{2mk+2}]$, or minus such an interval. In other words, we ensure that $B$ is a ``geometric-progression-like'' set in the sense that $BB^{-1}$ is not much larger than $B$ itself. It follows that $B^2B^{-2}$ consists of all points that belong to an interval of the form $[s^{2mk-4},s^{2mk+4}]$. This fact will be useful later on. Another observation we shall use is that 
\begin{align}\label{phi is lipschitz}
    \|x\|\leq\|\v(x)\|\leq \lambda\|x\|
\end{align} for every $x$. This implies that $\psi$ is a Lipschitz function. 

 Similarly,  we observe that the inverse of $\varphi$ satisfies $\frac 1\lambda \|x\| \le\|\v^{-1}(x)\| \le \|x\|$, and hence we get that $\psi^{-1}$ is Lipschitz as well. Thus, the tennis ball map $\psi$ is indeed bi-Lipschitz as claimed.

In what follows we shall show that the tennis ball map $\psi$ takes random linear subspheres of appropriate dimension to tennis balls.

\subsection{The definition of the set $\Gamma$}

Now that we have defined the tennis ball map, let us discuss the set $\Gamma$. The rough idea is that $\Gamma$ is the set of points $x\in S^{n-1}$ with almost all their coordinates in $B$. Since $A$ has a small complement, one would expect almost all coordinates of a random vector to belong to $A$, and indeed this is the case. It forms the basis of a probabilistic argument that shows that with high probability every $x\in S^{n-1} \cap X$ has the property that almost every coordinate of $x$ belongs to $A$, which implies that almost every coordinate of $\v(x)$ belongs to $B$. Thus a ``typical'' vector (one with almost all coordinates in the large set $A$) is mapped to a highly ``atypical'' vector (one with almost all coordinates in the small set $B$). 

This is a slight oversimplification, because of the normalization that replaces $\v$ by $\psi$. The actual definition of $\Gamma$ concerns the \emph{ratios} of the coordinates rather than their actual values.

Now let us give some more details. For the purposes of this problem, it is more natural, when talking about a unit vector $x$, to attach a weight of $x_i^2$ to the $i$th coordinate. For example, the statement ``almost every ratio $x_ix_j^{-1}$ belongs to $BB^{-1}$'' should be interpreted as meaning that
\begin{align*}
    \sum_{x_ix_j^{-1}\in BB^{-1}} x_i^2x_j^2 \geq(1-\e)\sum_{i,j=1}^n x_i^2x_j^2
\end{align*}
for some small $\e$, and similarly for other statements about coordinates. More precisely, with every vector $x$ we define an associated probability measure as follows.

\begin{definition}
For any $x\in \R^n\setminus \{0\}$ define a measure $\mu_x$ on $\{1,2,\dots,n\}$ by the formula \[\mu_x(J)=\frac{\|P_Jx\|^2}{\|x\|^2},\] where $P_J$ is the coordinate projection to the set $J$. 
\end{definition}

Let us now fix some useful notation.  For functions $f: \{1,2,\ldots,n\} \to \R$  and $g: \{1,2,\ldots,n\}\times \{1,2,\ldots,n\} \to \R$ we write 
\[\E_i^xf(i) = \|x\|^{-2}\E_ix_i^2f(i) \ \ \ \ \text{ and } \ \ \ \ \E_{i,j}^x g(i,j)= \|x\|^{-4}\E_{i,j}x_i^2x_j^2g(i,j),\]
where, as mentioned earlier, $\E_i$ and $\E_{i,j}$ are the obvious averages. 

Then, given a property $Q$ of integers in the set $\{1,2,\dots,n\}$ we define
\[ \P^x_i[Q(i)]  = \E_i^x \mathbbm{1}_Q(i)= \mu_x\{i:Q(i)\}.\]
Similarly, we shall write 
\[\P^x_{i,j}[Q(i,j)]=\E_{i,j}^x \mathbbm{1}_Q(i,j) = (\mu_x\times\mu_x)\{(i,j):Q(i,j)\},\] 
where $\mu_x\times\mu_x$ is the product measure.

We are now ready to define the set $\Gamma$.
\begin{definition}
Let $\Gamma \subset S^{n-1}$ be the set given by
\begin{align}\label{def: gamma set}
    \Gamma =\{ y\in S^{n-1}: \ \P_{i,j}^y [y_i/y_j\in BB^{-1}] \ge 1-2\lambda^2\be\},
\end{align}
for a suitable choice of the parameter $\be$.
\end{definition}

Also important to us will be a set $\Delta$ defined by
\begin{align}\label{def: delta set}
    \Delta = \{x\in S^{n-1}: \ \P^x_i[x_i\in A]<1-\be\}.
\end{align}
In this section we will be more interested in the complement \[ \Delta^c = \{x\in S^{n-1}: \ \P^x_i[x_i\in A]\ge1-\be\}\] but our choice of notation is more convenient in further sections.

\begin{proposition}
The image of $ \D^c$ under the tennis ball map $\psi$ is a subset of $\Gamma$.
\end{proposition}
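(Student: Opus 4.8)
The plan is to exploit two features of the tennis ball map: normalizing by $\|\v(x)\|$ does not change ratios of coordinates, so the event defining $\Gamma$ is insensitive to it; and $\v$ maps $A$ into $B$ coordinatewise. Throughout, $x\in\D^c$ is a unit vector, so $\|x\|=1$ and $\P^x_i[x_i\in A]\ge 1-\be$.

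First I would record that for indices $i,j$ with $x_j\ne 0$ one has $\psi(x)_i/\psi(x)_j=\v(x_i)/\v(x_j)$, since the common factor $\|\v(x)\|^{-1}$ cancels. Hence if $x_i\in A$ and $x_j\in A$ then $\v(x_i),\v(x_j)\in B$ (using $\v(A)=B$), and so $\psi(x)_i/\psi(x)_j\in BB^{-1}$. Therefore the ``bad'' set $\{(i,j):\psi(x)_i/\psi(x)_j\notin BB^{-1}\}$ is contained in $\{(i,j):x_i\notin A\ \text{or}\ x_j\notin A\}$, and since $\mu_{\psi(x)}\times\mu_{\psi(x)}$ is a product measure, marginalising and applying a union bound gives
\[1-\P^{\psi(x)}_{i,j}\bigl[\psi(x)_i/\psi(x)_j\in BB^{-1}\bigr]\le\P^{\psi(x)}_{i,j}\bigl[x_i\notin A\ \text{or}\ x_j\notin A\bigr]\le 2\,\P^{\psi(x)}_i[x_i\notin A].\]
So it suffices to prove $\P^{\psi(x)}_i[x_i\notin A]\le\lambda^2\be$.

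For this I would compute directly. Because $\|\psi(x)\|=1$ and $\psi(x)_i^2=\v(x_i)^2/\|\v(x)\|^2$,
\[\P^{\psi(x)}_i[x_i\notin A]=\E_i\psi(x)_i^2\,\mathbbm{1}[x_i\notin A]=\frac{\E_i\v(x_i)^2\,\mathbbm{1}[x_i\notin A]}{\|\v(x)\|^2}.\]
For the numerator I would use the pointwise bound $|\v(t)|\le\lambda|t|$, which follows from $(\ref{phi is lipschitz})$ applied to the vector $te_1$ (or directly from the staircase shape of $\v$), obtaining $\E_i\v(x_i)^2\,\mathbbm{1}[x_i\notin A]\le\lambda^2\,\E_ix_i^2\,\mathbbm{1}[x_i\notin A]=\lambda^2\,\P^x_i[x_i\notin A]\le\lambda^2\be$, where the final inequality is the hypothesis $x\in\D^c$. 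For the denominator I would use $\|\v(x)\|\ge\|x\|=1$, again from $(\ref{phi is lipschitz})$. Combining these, $\P^{\psi(x)}_i[x_i\notin A]\le\lambda^2\be$, whence $\P^{\psi(x)}_{i,j}[\psi(x)_i/\psi(x)_j\in BB^{-1}]\ge 1-2\lambda^2\be$, i.e. $\psi(x)\in\Gamma$.

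I do not anticipate a real obstacle here: the argument is a short direct computation. The only points calling for a little care are that $\psi$ is well defined (one needs $\v(x)\ne 0$ for $x\ne 0$, which holds since $\v$ is strictly increasing and odd with $\v(0)=0$), that the normalization genuinely cancels in the relevant ratios, and the bookkeeping of the factor $\lambda^2$ lost per coordinate — all of it coming from $\v(t)^2\le\lambda^2 t^2$ in the numerator, with the denominator contributing only the harmless bound $\|\v(x)\|\ge\|x\|$ — so that the total loss after the union bound is $2\lambda^2\be$, exactly the slack built into the definition of $\Gamma$.
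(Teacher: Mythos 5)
Your proof is correct and follows essentially the same route as the paper's: the key facts used are identical ($\v(A)\subset B$, the pointwise/vector bound $\|x\|\le\|\v(x)\|\le\lambda\|x\|$ giving the $\lambda^2$ loss when passing from $\mu_x$ to $\mu_{\v(x)}$, a union bound over the two coordinates yielding $2\lambda^2\be$, and the observation that the ratio event is unaffected by the normalization). The only cosmetic difference is that the paper phrases the measure comparison as $\mu_{\v(x)}(E)\le\lambda^2\mu_x(E)$ and invokes scale-invariance of $\Gamma$ at the end, whereas you work with $\psi(x)$ directly; the content is the same.
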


\begin{proof}
Let $x\in \D^c$ and note that if $\P^x_i[x_i\in A]\geq 1-\be$, then $\P^x_i[\v(x_i)\in B]\geq 1-\be$, by the definition of the function $\v$. Moreover, from \eqref{phi is lipschitz} it follows that $\|\v(x)\|^2$ always lies between $\|x\|^2$ and $\lambda^2 \|x\|^2$ and therefore $\mu_{\v(x)}(E)\leq\lambda^2\mu_x(E)$ for every vector $x\in\R^n$ and every set $E\subset\{1,2,\dots,n\}$. From this it follows (considering complements) that
\[\P_i^{\v(x)} [\v (x_i) \in B] >1-\lambda^2\be.\]
Since this is true for every $x\in \Delta^c$ we get that 
\begin{align}\label{eq: phi delta subset}
    \v(\Delta^c)\subset\{y:\P^y_i[y_i\in B]>1-\lambda^2\be\}.
\end{align}
In order to show the inclusion for $\psi$, note that if we have $y$ as above then 
\[\P^{y}_{i,j}[y_i\in B\ \hbox{and}\ y_j\in B]\geq 1-2\lambda^2\be,\]
which in turn gives us
\[\P^{y}_{i,j}[y_i/y_j\in BB^{-1}]\geq 1-2\lambda^2\be.\]
From this follows that such a $y$ belongs to $\Gamma$. Since $\Gamma$ is invariant under positive scalar multiples, we conclude using \eqref{eq: phi delta subset} that  $\psi(\Delta^c)\subset\Gamma$.
\end{proof}

In the next section we shall prove that the expansion $\Gamma_\e$ contains no great circle, and in the following one we shall prove that for a suitable constant $\a>0$, a random linear subsphere of dimension $\lfloor \a n\rfloor$ is contained in $\Delta^c$ with high probability.

\section{Proving that no great circle is contained in $\Gamma_\e$}\label{sec2.3}

We begin with a couple of lemmas that help us to describe the set $\Gamma_\e$. Recall that if $x$ is a vector in $\R^n$, then $\|x\|$ is its $L_2$ norm (as defined in \eqref{L2 norm}). 

\begin{lemma}\label{switchprob}
Let $y,z$ be unit vectors in $L_2^n$ with $\|y-z\|\leq\e$, and let $E$ be a subset of $\{1,2,\dots,n\}$. Then $\bigl|\P^y_i[E]-\P^z_i[E]\bigr|\leq 2\e$.
\end{lemma}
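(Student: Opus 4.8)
The quantity $\P^y_i[E] = \mu_y(E) = \|P_E y\|^2/\|y\|^2 = \|P_E y\|^2$, since $y$ is a unit vector, and similarly $\P^z_i[E] = \|P_E z\|^2$. So the plan is simply to estimate $\bigl|\|P_E y\|^2 - \|P_E z\|^2\bigr|$ in terms of $\|y - z\| \le \e$. The natural tool is the factorization $a^2 - b^2 = (a-b)(a+b)$ with $a = \|P_E y\|$ and $b = \|P_E z\|$.

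First I would bound $\bigl|\|P_E y\| - \|P_E z\|\bigr| \le \|P_E(y - z)\| \le \|y - z\| \le \e$, using the reverse triangle inequality and then the fact that a coordinate projection does not increase the $L_2^n$ norm. Next, for the other factor, $\|P_E y\| + \|P_E z\| \le \|y\| + \|z\| = 2$, again because $P_E$ is norm-nonincreasing and $y, z$ are unit vectors. Multiplying these two bounds gives $\bigl|\P^y_i[E] - \P^z_i[E]\bigr| = \bigl|\|P_E y\|^2 - \|P_E z\|^2\bigr| \le 2\e$, which is exactly the claim.

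There is no real obstacle here — the statement is essentially the observation that $t \mapsto t^2$ is $2$-Lipschitz on $[0,1]$, combined with the elementary fact that coordinate projections are contractions in the (normalized) $L_2^n$ norm. The only point worth a moment's care is to make sure all norms are the $L_2^n$ norm from \eqref{L2 norm} and that the normalization factor $n^{-1}$ is carried consistently through $\|P_E(y-z)\| \le \|y-z\|$; since $\|P_E x\|^2 = n^{-1}\sum_{i \in E} x_i^2 \le n^{-1}\sum_{i=1}^n x_i^2 = \|x\|^2$, this is immediate. I would write the whole argument in two or three lines.
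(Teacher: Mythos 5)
Your proof is correct and rests on the same underlying idea as the paper's, namely the difference-of-squares factorization $a^2-b^2=(a-b)(a+b)$ together with $\|y\|+\|z\|=2$; the only mechanical difference is that the paper works coordinatewise, bounding $\E_i|y_i^2-z_i^2|=\E_i|y_i-z_i|\,|y_i+z_i|\le\|y-z\|\,\|y+z\|$ by Cauchy--Schwarz, whereas you factor at the level of the projected norms and use the reverse triangle inequality plus the fact that $P_E$ is a contraction in $L_2^n$. Both give exactly the bound $2\e$, and your version is complete as written.
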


\begin{proof}
The left-hand side is equal to  $|\E_i(y_i^2-z_i^2)\mathbbm 1_E(i)|\leq\E_i|y_i^2-z_i^2|$. 
By the Cauchy-Schwarz inequality it follows that
\begin{align*}\E_i|y_i^2-z_i^2|&=\E_i|y_i-z_i|\,|y_i+z_i|\\
&\leq \|y-z\|\|y+z\|\\
&\leq 2\e,
\end{align*}
which proves the result.
\end{proof}

Recall from \S\ref{tennisballmap} that $BB^{-1}$ is the union of all intervals of the form $[s^{2mk-2},s^{2mk+2}]$, and $B^2B^{-2}$ is the union of all intervals of the form $[s^{2mk-4},s^{2mk+4}]$, where $s=\lambda^{1/2k}$ was one of the parameters used to define the ``staircase function'' $\v$. It follows that if $t\in BB^{-1}$ and $u\notin B^2B^{-2}$, then  $|t/u|\notin[s^{-2},s^2]$, which implies in particular that $|t/u-1|\geq 1-s^{-2}$.

\begin{lemma}\label{Gammaeps}
Let $\tau=1-s^{-2}$ and $\e< \tau^2$. Then for $z\in\Gamma_\e$ we have
\[\P^z_{i,j}[z_i/z_j\in B^2B^{-2}]\geq 1-2\lambda^2\be-6\e.\]
\end{lemma}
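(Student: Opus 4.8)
The plan is to leverage Lemma~\ref{switchprob} to transfer the defining property of $\Gamma$ to a nearby point of $\Gamma_\e$, and then to absorb the passage from $BB^{-1}$ to $B^2B^{-2}$ by a second, cruder ``coordinate-switching'' estimate. Concretely, given $z\in\Gamma_\e$, pick $y\in\Gamma$ with $\|y-z\|\le\e$. We know $\P^y_{i,j}[y_i/y_j\in BB^{-1}]\ge 1-2\lambda^2\be$. We would like to say that $\P^z_{i,j}$ of a similar event is large, but there are two distinct issues: (i) the measure has changed from $\mu_y\times\mu_y$ to $\mu_z\times\mu_z$, and (ii) the event itself refers to the coordinates of $y$, not of $z$, and we have no pointwise control $|y_i-z_i|$ on individual coordinates — only control in the $\ell_2$ (i.e. $\|\cdot\|$) sense.

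Issue (i) is handled by a two-dimensional version of Lemma~\ref{switchprob}: since $\mu_y\times\mu_y$ and $\mu_z\times\mu_z$ are product measures with $\|y-z\|\le\e$, for any fixed subset $F\subset\{1,\dots,n\}^2$ we have $|(\mu_y\times\mu_y)(F)-(\mu_z\times\mu_z)(F)|\le 4\e$ (one applies Lemma~\ref{switchprob} twice, in each coordinate separately, incurring $2\e$ each time; the telescoping $\mu_y\times\mu_y - \mu_z\times\mu_y + \mu_z\times\mu_y - \mu_z\times\mu_z$ does the bookkeeping). So $\P^z_{i,j}[y_i/y_j\in BB^{-1}]\ge 1-2\lambda^2\be-4\e$ — but this is still a statement about the $y$-coordinates.

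Issue (ii) is the real point, and it is where the slack from $BB^{-1}$ to $B^2B^{-2}$ and the remaining $2\e$ in the budget are spent. The idea is that the set of coordinates $i$ where $z_i$ and $y_i$ differ by a multiplicative factor outside $[s^{-1},s^1]$ (say) must have small $\mu_z$-mass: more precisely, the event $E=\{i : z_i/y_i\notin[s^{-1},s]\}$ — or rather a scale-invariant version, comparing to a common normalisation — has $\mu_z(E)$ controlled by $\|y-z\|$, because on $E$ the coordinates genuinely disagree and this costs $\ell_2$-distance. One should be slightly careful: $y$ and $z$ are both unit vectors in $L_2^n$, so ``$z_i\approx y_i$ for most $i$ in the $\mu$-weighted sense'' is exactly the content of $\|y-z\|$ being small, and a Markov/Chebyshev argument on $\E_i|z_i-y_i|^2\le\e^2$ gives that $\P^z_i$ and $\P^y_i$ of ``$z_i$ and $y_i$ are multiplicatively close'' is $\ge 1-O(\e)$. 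Then, whenever $y_i/y_j\in BB^{-1}$ \emph{and} $z_i/y_i, z_j/y_j\in[s^{-1},s]$, we get $z_i/z_j\in[s^{-2},s^2]\cdot BB^{-1}\subset B^2B^{-2}$ (using that $BB^{-1}$ is the union of intervals $[s^{2mk-2},s^{2mk+2}]$ and $B^2B^{-2}$ the union of $[s^{2mk-4},s^{2mk+4}]$, as recalled just before the lemma). Combining: $\P^z_{i,j}[z_i/z_j\in B^2B^{-2}]\ge 1-2\lambda^2\be-4\e-2\e=1-2\lambda^2\be-6\e$, after a union bound that distributes the two single-coordinate error terms (each $O(\e)$, tuned to contribute $\e$ apiece) across the pair $(i,j)$.

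The main obstacle is precisely the bridge in issue (ii): making rigorous the claim that ``$\|y-z\|$ small $\Rightarrow$ the pair-measure of $\{(i,j): z_i/z_j$ and $y_i/y_j$ are multiplicatively $s^{\pm 2}$-close$\}$ is $\ge 1-2\e$,'' because one must be careful that a small $\ell_2$-discrepancy can still produce a large \emph{ratio} discrepancy on coordinates that are themselves tiny — but such coordinates carry little $\mu$-weight, so the weighted count is still fine, and the bound $1-s^{-2}=\tau$ together with the hypothesis $\e<\tau^2$ is exactly what is needed to make the thresholds line up. I would set up a clean single-coordinate lemma first (``$\P^z_i[\,z_i/y_i\notin[s^{-1},s]\,]\le$ something like $2\e/\tau^2\le 2\e$ under $\e<\tau^2$''), prove it by the Cauchy–Schwarz computation in the style of Lemma~\ref{switchprob}, and then the rest is the union bound and the interval arithmetic sketched above.
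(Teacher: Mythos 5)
Your route is structurally reasonable and genuinely different from the paper's, but as written it does not prove the lemma with the stated constant, and the quantitative claim at its heart is wrong. The correct Chebyshev/Markov bound for your single-coordinate bridge is $\P^z_i\bigl[z_i/y_i\notin[s^{-1},s]\bigr]\le \|y-z\|^2/(1-s^{-1})^2\le \e^2/(1-s^{-1})^2$ (on that event $|z_i-y_i|\ge(1-s^{-1})|z_i|$, including the sign-change and $y_i=0$ cases). Since $\tau=1-s^{-2}=(1-s^{-1})(1+s^{-1})<2(1-s^{-1})$, this is at most $4\e^2/\tau^2$, which under the hypothesis $\e<\tau^2$ is only $<4\e$; your chain ``$2\e/\tau^2\le 2\e$ under $\e<\tau^2$'' is false, because $\tau<1$ (indeed $\tau\le 10^{-4}$ in the eventual choice of parameters). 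So your budget is $4\e$ from the two-coordinate switching plus up to $4\e$ for each of the two per-coordinate ratio events, and the conclusion you actually reach is $\P^z_{i,j}[z_i/z_j\in B^2B^{-2}]\ge 1-2\lambda^2\be-12\e$ under $\e<\tau^2$; you recover the stated $6\e$ only under the stronger assumption $\e\le(1-s^{-1})^2$, roughly $\e\le\tau^2/4$. The weaker constant would still be harmless downstream (the great-circle corollary only needs $2\lambda^2\be+O(\e)<1/8$ with $\e\le 10^{-8}$), but it is not the lemma as stated. Your interval arithmetic itself is fine: $BB^{-1}\cdot[s^{-2},s^2]\subset B^2B^{-2}$, signs included, and the telescoped two-coordinate version of Lemma \ref{switchprob} does give $4\e$.

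For comparison, the paper avoids per-coordinate ratio control entirely and gets the clean $6\e$ under exactly $\e<\tau^2$. It argues by contradiction: two applications of Lemma \ref{switchprob} (cost $2\e+2\e$) show that if the conclusion failed, the mixed-weight mass $\E_{i,j}z_i^2y_j^2\mathbbm{1}_{[y_i/y_j\in BB^{-1}\ \mathrm{and}\ z_i/z_j\notin B^2B^{-2}]}$ would be at least $2\e$; on that event $\bigl|\frac{y_iz_j}{y_jz_i}-1\bigr|>\tau$, and the exact identity $\E_{i,j}z_i^2y_j^2\bigl(\frac{y_iz_j}{y_jz_i}-1\bigr)^2=\E_{i,j}(y_iz_j-z_iy_j)^2=2-2\langle y,z\rangle^2$ then forces $\|y-z\|>\tau\sqrt\e$, contradicting $\|y-z\|\le\e$ precisely when $\e<\tau^2$. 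That cross-term identity converts ratio discrepancy into Euclidean distance with no loss, which is what your union-bound route gives away; if you keep your approach, either accept the larger constant (and adjust the later corollary accordingly) or strengthen the hypothesis to $\e\le\tau^2/4$.
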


\begin{proof}
Let $y\in\Gamma$ with $\|y\|=1$ be such that $\|y-z\|\leq\e$. Then $\P^y_{i,j}[y_i/y_j\in BB^{-1}]\geq 1-2\lambda^2\be$, or equivalently
\[\E_{i,j} y_i^2y_j^2\mathbbm{1}_{[y_i/y_j\in BB^{-1}]}\geq 1-2\lambda^2\be.\]
By Lemma \ref{switchprob} (and recalling that $\E_i y_i^2=1$), it follows that 
\[\E_{i,j} (y_i^2-z_i^2)y_j^2\mathbbm{1}_{[y_i/y_j\in BB^{-1}]} \le 2\e\]
and hence combining both inequalities gives
\begin{align}\label{eq: first}
    \E_{i,j} z_i^2y_j^2\mathbbm{1}_{[y_i/y_j\in BB^{-1}]}\geq 1-2\lambda^2\be-2\e.
\end{align}

If the conclusion that $\P^z_{i,j}[z_i/z_j\in B^2B^{-2}]\geq 1-2\lambda^2\be-6\e$ is not true, then we would have
\[\E_{i,j} z_i^2z_j^2\mathbbm{1}_{[z_i/z_j\notin B^2B^{-2}]}\geq 2\lambda^2\be+6\e,\]
and by Lemma \ref{switchprob} again it follows that
\begin{align}\label{eq: second}
    \E_{i,j} z_i^2y_j^2\mathbbm{1}_{[z_i/z_j\notin B^2B^{-2}]}\geq 2\lambda^2\be+4\e.
\end{align}
By summing \eqref{eq: first} and \eqref{eq: second}, and estimating trivially the probability of the union of the events by 1, we deduce that
\[\E_{i,j} z_i^2y_j^2\mathbbm{1}_{[y_i/y_j\in BB^{-1}\ \hbox{and}\ z_i/z_j\notin B^2B^{-2}]}\geq 2\e.\]

As remarked before the lemma, if $y_i/y_j\in BB^{-1}$ and $z_i/z_j\notin B^2B^{-2}$, then $|\frac{y_iz_j}{y_jz_i}-1|> \tau$. It follows that
\[\E_{i,j} z_i^2y_j^2\Bigl(\frac{y_iz_j}{y_jz_i}-1\Bigr)^2>2\tau^2\e.\]
But since $\E_i y_i^2=\E_i z_i^2=1$ we have
\begin{align*}\E_{i,j} z_i^2y_j^2\Bigl(\frac{y_iz_j}{y_jz_i}-1\Bigr)^2&=\E_{i,j}(y_iz_j-z_iy_j)^2\\
&=\E_{i,j} (y_i^2z_j^2+y_j^2z_i^2-2y_iy_jz_iz_j)\\
&=2-2\langle y,z\rangle^2.
\end{align*}
Furthermore, if $2-2\langle y,z\rangle^2>2\tau^2\e$, then $\langle y,z\rangle^2< 1-\tau^2\e$, which implies that $\langle y,z\rangle<1-\tau^2\e/2$, which in turn means that
\[\|y-z\|^2=2-2\langle y,z\rangle> \tau^2\e,\]
and therefore that $\|y-z\|>\tau\sqrt\e$. However, since by assumption we have $\tau>\sqrt\e$, this is a contradiction.
\end{proof}

\begin{corollary}
It follows directly from Lemma \ref{Gammaeps} that \[\Gamma_\e \subset \{z:\P^z_{i,j}[z_i/z_j\in B^2B^{-2}]\geq 1-2\lambda^2\be-6\e\}.\]
\end{corollary}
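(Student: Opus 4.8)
The plan is to observe that this corollary is simply a repackaging of Lemma \ref{Gammaeps} as a statement about sets rather than about individual points, so that essentially no work remains to be done.

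First I would recall the convention, in force throughout this section, that the $\e$-expansion of a subset of $S^{n-1}$ is taken within $S^{n-1}$: thus every $z\in\Gamma_\e$ is a unit vector of $L_2^n$ lying at distance at most $\e$ from some point of $\Gamma$. In particular the hypotheses of Lemma \ref{Gammaeps} are satisfied by every such $z$ — namely that $z$ is a unit vector, that its distance to $\Gamma$ is at most $\e$, and that we are operating under the standing assumption $\e<\tau^2$ with $\tau=1-s^{-2}$. Then I would fix an arbitrary $z\in\Gamma_\e$ and apply Lemma \ref{Gammaeps} directly: the lemma yields $\P^z_{i,j}[z_i/z_j\in B^2B^{-2}]\ge 1-2\lambda^2\be-6\e$, which is precisely the defining condition for membership of the set on the right-hand side. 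Since $z$ was arbitrary, the inclusion $\Gamma_\e\subset\{z:\P^z_{i,j}[z_i/z_j\in B^2B^{-2}]\geq 1-2\lambda^2\be-6\e\}$ follows.

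There is no genuine obstacle here; the one point that needs a moment's attention is simply that the inequality $\e<\tau^2$ required by Lemma \ref{Gammaeps} is indeed assumed (it is part of the running set-up), and that, because $\Gamma_\e$ is an expansion taken inside the sphere, the normalization $\|z\|=1$ used in the proof of that lemma holds automatically for every $z\in\Gamma_\e$.
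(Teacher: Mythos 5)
Your proposal is correct and matches the paper's intent exactly: the corollary is just the pointwise statement of Lemma \ref{Gammaeps} repackaged as a set inclusion, which is why the paper offers no separate proof. Your remark that the standing assumption $\e<\tau^2$ and the unit-norm normalization of points of $\Gamma_\e$ make the lemma directly applicable is the only point worth noting, and you noted it.
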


This bigger set resembles $\Gamma$ but is defined using slightly different parameters. We now turn to the proof that every great circle contains a point that does not belong to this slightly expanded $\Gamma$-like set.

\subsection{Finding a suitable point in an arbitrary 2-dimensional subspace}
Let $Y$ be a 2-dimensional subspace of $L_2^n$ and let $\{u,v\}$ be an orthonormal basis for $Y$. Then the unit sphere of $Y$ consists of vectors $u\cos\theta+v\sin\theta$. The $i$th coordinate of such a vector, $u_i\cos\theta+v_i\sin\theta$, can be rewritten as $a_i\sin(\theta+\phi_i)$, where $a_i=\sqrt{u_i^2+v_i^2}$ and $\phi_i$ is chosen such that $a_i\sin\phi_i=u_i$ and $a_i\cos\phi_i=v_i$. Let $a=(a_1, \ldots ,a_n)$ and note that $\|a\|^2=2$.

We start by proving that there are plenty of pairs $(i,j)$ such that $\phi_i$ is not close to $\phi_j$ or $-\phi_j$. 

\begin{lemma} \label{separatedangles}
With $a_1,\dots,a_n$ and $\phi_1,\dots,\phi_n$ as above, we have the inequality
\[\P^a_{i,j}[\cos(2(\phi_i-\phi_j))\leq 1/2]\geq 1/3.\]
\end{lemma}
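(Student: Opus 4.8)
The plan is a short Fourier / second-moment computation. Set $\theta_i=2\phi_i$, so that the event $\cos\big(2(\phi_i-\phi_j)\big)\le 1/2$ is exactly $\cos(\theta_i-\theta_j)\le 1/2$. Since under $\mu_a\times\mu_a$ the indices $i$ and $j$ are independent with the same law, expanding $\cos(\theta_i-\theta_j)=\cos\theta_i\cos\theta_j+\sin\theta_i\sin\theta_j$ gives
\[\E_{i,j}^a\cos\big(2(\phi_i-\phi_j)\big)=\big(\E_i^a\cos 2\phi_i\big)^2+\big(\E_i^a\sin 2\phi_i\big)^2.\]

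The key step is to observe that both terms on the right vanish, and this is where the orthonormality of $\{u,v\}$ enters. From $a_i\sin\phi_i=u_i$ and $a_i\cos\phi_i=v_i$ one gets the identities $a_i^2\cos 2\phi_i=v_i^2-u_i^2$ and $a_i^2\sin 2\phi_i=2u_iv_i$. Since $\|a\|^2=2$ we have $\E_i^a f(i)=\tfrac12\E_i a_i^2 f(i)$, and orthonormality of $u,v$ in $L_2^n$ means $\E_iu_i^2=\E_iv_i^2=1$ and $\E_iu_iv_i=0$. Hence $\E_i^a\cos 2\phi_i=\tfrac12(\E_iv_i^2-\E_iu_i^2)=0$ and $\E_i^a\sin 2\phi_i=\E_iu_iv_i=0$, so that $\E_{i,j}^a\cos\big(2(\phi_i-\phi_j)\big)=0$.

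Finally I would bound the probability directly. Write $p=\P_{i,j}^a[\cos(2(\phi_i-\phi_j))\le 1/2]$. On the complementary event we have $\cos(2(\phi_i-\phi_j))>1/2$, and on the event itself $\cos(2(\phi_i-\phi_j))\ge -1$; therefore
\[0=\E_{i,j}^a\cos\big(2(\phi_i-\phi_j)\big)\ \ge\ \tfrac12(1-p)-p\ =\ \tfrac12-\tfrac32 p,\]
which rearranges to $p\ge 1/3$, as required.

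There is no real obstacle here: the only thing to spot is that the doubled angles $2\phi_i$, weighted according to $\mu_a$ (i.e.\ by $a_i^2$), have vanishing ``mean direction'' precisely because $u$ and $v$ are orthonormal, after which the crude pointwise bound $\cos\ge -1$ on the bad event does the rest. The constant $1/3$ is not tight but is exactly what is needed in the sequel.
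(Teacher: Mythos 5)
Your argument is correct and follows essentially the same route as the paper: both establish $\E^a_{i,j}\cos(2(\phi_i-\phi_j))=0$ and then conclude with the identical averaging bound $0\ge\tfrac12(1-p)-p$. The only cosmetic difference is that you obtain the vanishing of $\E^a_i\cos 2\phi_i$ and $\E^a_i\sin 2\phi_i$ directly from the double-angle identities and the orthonormality of $u,v$, whereas the paper derives it by differentiating the identity $\E^a_i\sin^2(\theta+\phi_i)=\tfrac12$ twice in $\theta$.
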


\begin{proof}
Since $u,v$ are fixed unit vectors (in $L_2^n$) we have that $\|a\|^2=\E_i (u_i^2+v_i^2)=2$ and hence $\E^a_i\sin^2(\theta+\phi_i)=\|a\|^{-2}\E_i a_i^2 \sin^2(\theta +\phi_i)=\frac{1}{2}$  for every $\theta$. 
Therefore, we find on differentiating with respect to $\theta$ that
\[2\mathbb E^a_i\sin(\theta+\phi_i)\cos(\theta+\phi_i)=\mathbb E^a_i\sin(2\theta+2\phi_i)=0\]
for every $\theta$, and hence, on differentiating again, that
\[\mathbb E^a_i\cos(2\theta+2\phi_i)=0\]
for every $\theta$ as well.

From that it follows that
\[\mathbb E^a_{i,j}\Bigl(\cos(2\theta+2\phi_i)\cos(2\theta+2\phi_j)+\sin(2\theta+2\phi_i)\sin(2\theta+2\phi_j)\Bigr)=\mathbb E^a_{i,j}\cos(2(\phi_i-\phi_j))=0.\]
Let $F$ be the event that $\cos(2(\phi_i-\phi_j))\leq 1/2$. We have seen that $\E^a_{i,j}\cos(2(\phi_i-\phi_j))=0$, and we also know that $\cos(2(\phi_i-\phi_j))\in[-1,1]$. So, using the total probability formula we get
\[0=\mathbb E^a_{i,j}\cos(2(\phi_i-\phi_j))\geq\frac 12\P^a_{i,j}[F^c]-\P^a_{i,j}[F]=\frac 12-\frac 32\P^a_{i,j}[F],\]
from which the desired inequality follows.
\end{proof}

Next, we need a technical lemma that will help us to show that if $\phi_i$ is not approximately $\pm\phi_j$, then $\sin(\theta+\phi_i)/\sin(\theta+\phi_j)$ is not often close to an element of some given geometric progression.

\begin{lemma} \label{cot}
Let $\theta$ be chosen randomly from $[-\pi,\pi]$ and let $0<a<b$. Then 
\[ \P \left[a \le \cot\theta\le b \right] \le \frac{b-a}{\pi(1+a^2)},\] and the same bound holds for the probability that $\cot\theta\in[-b,-a]$.
\end{lemma}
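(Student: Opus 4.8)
The plan is to compute the probability directly as a length of a subinterval of $[-\pi,\pi]$, using the fact that $\cot$ is a monotone bijection on each of the intervals where it is defined. First I would note that on $(0,\pi)$ the function $\theta\mapsto\cot\theta$ is a strictly decreasing bijection onto $\R$, and similarly on $(-\pi,0)$; by symmetry ($\cot(\theta+\pi)=\cot\theta$, but here we should rather use $\cot(-\theta)=-\cot\theta$, so the negative-$\theta$ branch produces exactly the event $\cot\theta\in[-b,-a]$) it suffices to handle the branch $\theta\in(0,\pi)$ and the event $a\le\cot\theta\le b$. On that branch, $\{a\le\cot\theta\le b\}=\{\operatorname{arccot}(b)\le\theta\le\operatorname{arccot}(a)\}$, an interval of length $\operatorname{arccot}(a)-\operatorname{arccot}(b)$, so the probability (with respect to the uniform measure of total mass $2\pi$) is
\[
\P[a\le\cot\theta\le b]=\frac{\operatorname{arccot}(a)-\operatorname{arccot}(b)}{2\pi}.
\]

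Next I would bound this difference by the mean value theorem. Since $\frac{d}{dt}\operatorname{arccot}(t)=-\frac1{1+t^2}$, we have $\operatorname{arccot}(a)-\operatorname{arccot}(b)=\int_a^b\frac{dt}{1+t^2}\le\frac{b-a}{1+a^2}$, the last step using that $1+t^2\ge 1+a^2$ for $t\ge a\ge 0$. Hence
\[
\P[a\le\cot\theta\le b]\le\frac{b-a}{2\pi(1+a^2)}.
\]
This is in fact a factor of $2$ better than the claimed bound $\frac{b-a}{\pi(1+a^2)}$, so the stated inequality follows a fortiori; the weaker constant presumably just leaves room and avoids having to be careful about whether $\theta$ ranges over an interval of length $\pi$ or $2\pi$. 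The bound for $\cot\theta\in[-b,-a]$ is identical by the substitution $\theta\mapsto-\theta$, which preserves the uniform measure on $[-\pi,\pi]$ and sends $\cot\theta$ to $-\cot\theta$.

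There is no real obstacle here: the only points requiring a word of care are (i) making sure the measure-normalisation is handled consistently — the problem says $\theta$ is chosen from $[-\pi,\pi]$, an interval of length $2\pi$, and the event lives in one of the two half-period branches, which is exactly where the spare factor of $2$ comes from; and (ii) observing that the set $\{\cot\theta=a\}$ or points where $\cot$ is undefined ($\theta\in\{0,\pm\pi\}$) have measure zero and so do not affect the computation. I would present the one-branch computation, invoke the monotonicity of $\operatorname{arccot}$ and the mean value theorem estimate, and then quote symmetry for the negative case.
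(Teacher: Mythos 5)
Your approach (monotonicity of $\cot^{-1}$ plus the mean value theorem / integral estimate) is the same as the paper's, but there is a genuine error in your branch analysis. You claim that on the branch $\theta\in(-\pi,0)$ the relation $\cot(-\theta)=-\cot\theta$ shows that only the event $\cot\theta\in[-b,-a]$ can occur there. This is false: $\cot$ has period $\pi$, not $2\pi$, so on $(-\pi,0)$ it is again a decreasing bijection onto all of $\R$ (for instance $\cot(-3\pi/4)=1>0$). Consequently the event $\{a\le\cot\theta\le b\}$ occurs once in \emph{each} of the two branches $(0,\pi)$ and $(-\pi,0)$, each time on an interval of length $\cot^{-1}a-\cot^{-1}b$, so with the uniform measure on $[-\pi,\pi]$ the probability is
\[
\P\left[a\le\cot\theta\le b\right]=\frac{2\left(\cot^{-1}a-\cot^{-1}b\right)}{2\pi}=\frac{\cot^{-1}a-\cot^{-1}b}{\pi},
\]
which is exactly the quantity the paper works with (the paper phrases this via the $\pi$-periodicity of $\cot$). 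The symmetry $\theta\mapsto-\theta$ does correctly reduce the second event $\cot\theta\in[-b,-a]$ to the first, but it does not split the two events between the two branches.

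Because of this miscount, the ``improved'' bound $\frac{b-a}{2\pi(1+a^2)}$ you derive is actually false: letting $b\to a$, the true probability behaves like $\frac{b-a}{\pi(1+a^2)}$, so the factor of $2$ you believed was slack in the lemma is not there — the paper's constant is essentially sharp. The lemma as stated still follows from your estimate $\cot^{-1}a-\cot^{-1}b=\int_a^b\frac{dt}{1+t^2}\le\frac{b-a}{1+a^2}$ once the correct probability $\frac{\cot^{-1}a-\cot^{-1}b}{\pi}$ is used, so the fix is only to count both branches (equivalently, invoke the $\pi$-periodicity of $\cot$), after which your argument coincides with the paper's.
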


\begin{proof}
Since $\cot$ is periodic with period $\pi$ and is decreasing in the interval $(0,\pi)$, the probability in question is $(\cot^{-1}a-\cot^{-1}b)/\pi$. By the mean value theorem, $\cot^{-1}a-\cot^{-1}b$ is at most $|a-b|$ times the absolute value of the derivative of $\cot^{-1}$ at $a$. Since that derivative is $-1/(1+a^2)$, the first result follows. The second then holds by symmetry.
\end{proof}

Recall once again that $B^2B^{-2}$ is the set of all real numbers $x$ such that \[|x|\in[\lambda^ms^{-4},\lambda^ms^{4}]\] for some positive integer $m$.

The main point about the bound in the next lemma is not its exact form, but simply that it is $O(\xi)$ except when $\phi_i$ is close to $\phi_j$ or $\phi_j+\pi$.

\begin{lemma} \label{badratio}
Let $\xi=s^{8}-1$ and let $\theta\in[0,2\pi]$ be chosen uniformly at random. Then  
\[\P \left[ \frac{a_i\sin(\theta+\phi_i)}{a_j\sin(\theta+\phi_j)}\in B^2B^{-2}\right] \le \frac{4 \xi\lambda}{\pi(\lambda-1)}(4+|\cot(\phi_i-\phi_j)|).\]
\end{lemma}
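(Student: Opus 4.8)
The plan is to reduce the probability in question to an estimate about when $\cot\theta$ lies in a short interval, and then apply Lemma~\ref{cot}. First I would write $s = a_i\sin(\theta+\phi_i)$ and $t = a_j\sin(\theta+\phi_j)$ and expand $\sin(\theta+\phi_i) = \sin(\theta+\phi_j)\cos(\phi_i-\phi_j) + \cos(\theta+\phi_j)\sin(\phi_i-\phi_j)$. Setting $\psi_j = \theta+\phi_j$, this gives
\[
\frac{a_i\sin(\theta+\phi_i)}{a_j\sin(\theta+\phi_j)} = \frac{a_i}{a_j}\bigl(\cos(\phi_i-\phi_j) + \cot\psi_j\,\sin(\phi_i-\phi_j)\bigr),
\]
so the ratio is an affine function of $\cot\psi_j$ (with the degenerate case $\sin(\phi_i-\phi_j)=0$ handled separately — there the ratio is constant and the event is trivially empty or everything, but the stated bound blows up via $|\cot(\phi_i-\phi_j)|$ so there is nothing to prove). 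As $\theta$ ranges uniformly over $[0,2\pi]$, so does $\psi_j$ up to a shift, and $\cot\psi_j$ has the distribution of $\cot$ of a uniform variable on $[-\pi,\pi]$, so Lemma~\ref{cot} applies.

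Next I would control the preimage of $B^2B^{-2}$ under this affine map. Recall $B^2B^{-2}$ is a union of intervals $\pm[\lambda^m s^{-4}, \lambda^m s^4]$, each of multiplicative width $s^8 = 1+\xi$, hence of length $\lambda^m s^{-4}\xi$. Under the affine substitution $r = \frac{a_i}{a_j}(\cos(\phi_i-\phi_j) + c\,\sin(\phi_i-\phi_j))$ with $c = \cot\psi_j$, an interval of length $L$ in $r$ pulls back to an interval of length $L / \bigl(\frac{a_i}{a_j}|\sin(\phi_i-\phi_j)|\bigr)$ in $c$. Applying the bound from Lemma~\ref{cot} to each such $c$-interval $[a', b']$ — the bound being $\frac{b'-a'}{\pi(1+a'^2)} \le \frac{b'-a'}{\pi}$ crudely, but I expect we want to keep the $(1+a'^2)^{-1}$ factor to make the geometric series over $m$ converge — and summing over all $m\in\Z$ gives a geometric series in $\lambda^{\pm m}$ that sums to something of order $\frac{\lambda}{\lambda-1}$ times the largest term, which is of order $\xi$. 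The factor $(4 + |\cot(\phi_i-\phi_j)|)$ and the constants $4$, $\frac{1}{\pi}$ should drop out of tracking the contributions near $c$ close to $0$ (where the ratio is near $\frac{a_i}{a_j}\cos(\phi_i-\phi_j)$, the ``bulk'' of the $\cot$ distribution) versus the tails; the $|\cot(\phi_i-\phi_j)|$ term precisely captures the worst case when $\phi_i-\phi_j$ is near $0$ or $\pi$ so that $|\sin(\phi_i-\phi_j)|$ in the denominator is small.

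The main obstacle I anticipate is the bookkeeping of the sum over the infinitely many intervals comprising $B^2B^{-2}$: one must split according to which intervals pull back to $c$-values near $0$ (contributing via the $(1+a'^2)^{-1}\approx 1$ regime, where the lengths form a two-sided geometric series dominated by the middle term $\sim \frac{a_i}{a_j}|\cos(\phi_i-\phi_j)|\cdot(\text{something})$) and which pull back to large $|c|$ (where the $(1+a'^2)^{-1}$ decay beats the geometric growth of the interval lengths). Getting the clean form $\frac{4\xi\lambda}{\pi(\lambda-1)}(4 + |\cot(\phi_i-\phi_j)|)$ rather than a messier expression requires being a little careful, but since the lemma explicitly says the exact form is unimportant, I would aim for the cleanest estimate that has the right shape: $O(\xi)\cdot\frac{\lambda}{\lambda-1}$ uniformly, blowing up only through $|\cot(\phi_i-\phi_j)|$. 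In particular I would not worry about optimizing constants, only about making the geometric series converge (which is where $\lambda > 1$, equivalently $s > 1$, is essential) and exhibiting the dependence on $\phi_i - \phi_j$.
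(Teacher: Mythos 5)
Your plan is essentially the paper's proof: it too rewrites the ratio as $\frac{a_i}{a_j}\bigl(\cos(\phi_i-\phi_j)+\sin(\phi_i-\phi_j)\cot\theta\bigr)$, notes the event becomes $\cot\theta$ lying in a scaled, shifted copy of $B^2B^{-2}$, applies Lemma \ref{cot} to each constituent interval, and sums a geometric series (using $\lambda>1$) split between ``bulk'' intervals near $0$, bounded by their lengths, and ``tail'' intervals where the $(1+a^2)^{-1}$ decay wins, with the cutoff at scale $\max\{2\cot(\phi_i-\phi_j),1\}$ producing exactly the $O\bigl(\xi\tfrac{\lambda}{\lambda-1}(1+|\cot(\phi_i-\phi_j)|)\bigr)$ shape you describe. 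Your bookkeeping is left as a sketch and your constants would likely differ, but since $\xi$ can be taken as small as desired this is harmless downstream, so the approach is correct and matches the paper.
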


\begin{proof}
The distribution of $\frac{a_i\sin(\theta+\phi_i)}{a_j\sin(\theta+\phi_j)}$ is the same as the distribution of 
\[\frac{a_i\sin(\theta+\phi_i-\phi_j)}{a_j\sin(\theta)}=\frac{a_i}{a_j}\Bigl(\cos(\phi_i-\phi_j)+\sin(\phi_i-\phi_j)\cot\theta\Bigr).\]
Therefore, we are interested in the probability that $\cot\theta\in\frac{a_j}{a_i\sin(\phi_i-\phi_j)}B^2B^{-2}-\cot(\phi_i-\phi_j)$.

Let $t=|\frac{a_j s^{-4}}{a_i\sin(\phi_i-\phi_j)}|$.
Then \[\frac{a_j}{a_i\sin(\phi_i-\phi_j)}B^2B^{-2}= \bigcup_{m} \Big([t\lambda^m,t\lambda^ms^{8}]\cup[-t\lambda^m s^{8},-t\lambda^m]\Big).\]
 By Lemma \ref{cot}, we get the bound  \[ \P \left[\cot\theta\in\Bigl[t\lambda^m-\cot(\phi_i-\phi_j),t\lambda^ms^{8}-\cot(\phi_i-\phi_j)\Bigr] \right] \le \frac{\xi t\lambda^m}{\pi}\] for all $m$, and in addition if $t\lambda^m \ge 2\cot(\phi_i-\phi_j)$, then since $s^{-2}<1$ we have an upper bound of \[\frac{\xi t\lambda^m}{\pi(1+t^2\lambda^{2m}/4)}\leq \frac{4\xi}{\pi t\lambda^m}.\]

If $\cot(\phi_i-\phi_j)\geq 0$, then the probability that $\cot\theta+\cot(\phi_i-\phi_j)$ lies in the positive part of $\frac{a_j}{a_i\sin(\phi_i-\phi_j)}B^2B^{-2}$ is therefore at most $\xi/\pi$ multiplied by the sum
\[\sum_{t\lambda^m\leq S}t\lambda^m+4\sum_{t\lambda^m>S}\frac 1{t\lambda^m},\]
where $S=\max\{2\cot(\phi_i-\phi_j),1\}$. By the formula for the sum of a geometric progression, and recalling that $\lambda \ge \frac{3}{2}$, the first sum can be estimated by
\[\sum_{t\lambda^m\leq S}t\lambda^m = t \sum_{m=-\infty}^{m_0} \left(\frac{1}{\lambda}\right)^m = t \frac{(1/\lambda)^{-m_0}}{1-\lambda^{-1}} \le S \frac{\lambda}{\lambda-1},\]
and similarly the second sum is at most $S^{-1}\frac{\lambda}{\lambda-1}$. Therefore, the total is at most \[\frac{(S+4S^{-1})\lambda}{\lambda-1}\leq \frac{(5+2\cot(\phi_i-\phi_j))\lambda}{\lambda-1}.\]
Therefore, we obtain an answer of at most $\xi\lambda(5+2\cot(\phi_i-\phi_j))/\pi(\lambda-1)$.

If $\cot(\phi_i-\phi_j)<0$, then in the same way we get that the probability that $\cot\theta\in\bigl[t\lambda^m-\cot(\phi_i-\phi_j),t\lambda^ms^{8}-\cot(\phi_i-\phi_j)\bigr]$ is at most $\xi t\lambda^m/\pi$ for all $m$ but now it is also at most $\xi/\pi t\lambda^m$ for all $m$. Using the first bound when $t\lambda^m\leq 1$ and the second when $t\lambda^m>1$, we obtain an upper bound of at most \[\frac{2\xi \lambda}{\pi(\lambda-1)}.\]

Considering the negative part of $B$ as well and combining these two estimates, we obtain the result stated.
\end{proof}

Let us recall that in Subsection \ref{tennisballmap} we defined $s=\lambda ^{1/2k}$, where $\lambda>1$ and $k\in \N$ large enough.  We have further, for convenience, defined parameters  $\tau=1-s^{-2}$ and $\xi=s^8-1$. 

\begin{corollary} \label{2Dsubspaces}
If $\tau\le10^{-4}$ and $\lambda\geq 3/2$, then in every 2-dimensional subspace of $L_2^n$ there is a vector $y$ such that 
\[\P^y_{i,j}[y_i/y_j\notin B^2B^{-2}]\geq 1/8.\]
\end{corollary}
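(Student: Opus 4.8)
## Proof proposal for Corollary \ref{2Dsubspaces}

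The plan is to combine the lower bound from Lemma \ref{separatedangles} on the measure of well-separated pairs $(i,j)$ with the upper bound from Lemma \ref{badratio} on the probability (over $\theta$) that a given ratio lands in $B^2B^{-2}$, and then extract a single good $\theta$ by averaging. First I would fix a 2-dimensional subspace $Y$ with orthonormal basis $\{u,v\}$, and write coordinates of the unit vector $u\cos\theta+v\sin\theta$ as $a_i\sin(\theta+\phi_i)$ as in the set-up preceding Lemma \ref{separatedangles}; recall $\|a\|^2=2$. The key observation is that the measure $\mu_a$ built from the amplitude vector $a$ is exactly the averaging measure that makes both lemmas speak about the same pairs $(i,j)$.

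The main steps: (1) Let $G$ be the set of pairs $(i,j)$ with $\cos(2(\phi_i-\phi_j))\le 1/2$. By Lemma \ref{separatedangles}, $\P^a_{i,j}[G]\ge 1/3$. On $G$, a short trigonometric computation shows $\phi_i-\phi_j$ is bounded away from $0$ and $\pi$ (i.e.\ $|\sin(\phi_i-\phi_j)|$ is bounded below, hence $|\cot(\phi_i-\phi_j)|$ is bounded above by an absolute constant — one gets $|\cot(\phi_i-\phi_j)|\le 1$ from $\cos(2\alpha)\le 1/2$, since that forces $\sin^2\alpha\ge 1/4$ and $\cos^2\alpha\le 3/4$). (2) For such a pair, Lemma \ref{badratio} gives
\[\P_\theta\Big[\tfrac{a_i\sin(\theta+\phi_i)}{a_j\sin(\theta+\phi_j)}\in B^2B^{-2}\Big]\le \frac{4\xi\lambda}{\pi(\lambda-1)}\cdot 5 = \frac{20\xi\lambda}{\pi(\lambda-1)}.\]
Since $\xi=s^8-1$ and $s^2=1/(1-\tau)$, the hypothesis $\tau\le 10^{-4}$ makes $\xi$ as small as we like (of order $8\tau$), and with $\lambda\ge 3/2$ the factor $\lambda/(\lambda-1)\le 3$, so this probability is at most, say, $1/100$ for $\tau$ small enough; one checks the explicit numerics. (3) Now integrate over $\theta\in[0,2\pi]$: by Fubini,
\[\E_\theta\ \P^a_{i,j}\big[(i,j)\in G,\ \tfrac{a_i\sin(\theta+\phi_i)}{a_j\sin(\theta+\phi_j)}\in B^2B^{-2}\big]\le \frac{1}{100},\]
so there exists a $\theta_0$ with $\P^a_{i,j}[(i,j)\in G \text{ and the ratio}\in B^2B^{-2}]\le 1/100$. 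Set $y=u\cos\theta_0+v\sin\theta_0$, a unit vector in $Y$. (4) Finally relate $\mu_a$-measure of pairs to $\mu_y$-measure: the $i$th coordinate of $y$ is $y_i=a_i\sin(\theta_0+\phi_i)$, and $y_i/y_j=\frac{a_i\sin(\theta_0+\phi_i)}{a_j\sin(\theta_0+\phi_j)}$. The subtlety is that $\mu_y$ weights pair $(i,j)$ by $y_i^2y_j^2/\|y\|^4$ whereas $\mu_a$ weights by $a_i^2a_j^2/\|a\|^4$ — these differ. But $G$ has $\mu_a$-measure $\ge 1/3$ and the bad ratios inside $G$ have $\mu_a$-measure $\le 1/100$, so the pairs in $G$ with $y_i/y_j\notin B^2B^{-2}$ have $\mu_a$-measure at least $1/3-1/100 > 1/4$; I then need to transfer this to $\mu_y$.

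The main obstacle is precisely step (4): the two measures $\mu_a$ and $\mu_y$ are genuinely different, so a set of large $\mu_a$-measure need not have large $\mu_y$-measure. The clean way around this is to not do step (3)/(4) in this order but instead to choose $\theta_0$ more carefully so that the averaging is done directly against the right weights. Concretely, I would note that for each fixed pair $(i,j)\in G$, the bound in Lemma \ref{badratio} holds for \emph{every} amplitude configuration, and rewrite the quantity we want, $\P^y_{i,j}[y_i/y_j\in B^2B^{-2}]$, as
\[\frac{1}{\|y\|^4}\,\E_{i,j}\, y_i^2y_j^2\,\mathbbm 1[y_i/y_j\in B^2B^{-2}],\]
where $y_i=a_i\sin(\theta_0+\phi_i)$. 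Averaging this over $\theta_0$, the factor $\|y\|^4$ fluctuates, so instead I average the numerator $\E_\theta\,\E_{i,j}\,a_i^2a_j^2\sin^2(\theta+\phi_i)\sin^2(\theta+\phi_j)\mathbbm 1[\cdots]$ and separately control $\E_\theta\|y\|^4$ from below (it equals a fixed constant of order $1$ since $\E_\theta\|y\|^4 = \E_\theta(\E_i a_i^2\sin^2(\theta+\phi_i))^2$ is bounded below, e.g.\ by Jensen $\ge(\E_{\theta,i}a_i^2\sin^2(\theta+\phi_i)/\|a\|^2)^2\cdot\|a\|^4$ suitably — this needs a small separate check but is routine). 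Swapping the order of the $\theta$ and $(i,j)$ integrals and applying Lemma \ref{badratio} pointwise to the separated pairs, the numerator of the "bad" part is small relative to $\E_\theta\|y\|^4$, while the "separated" part is bounded below using Lemma \ref{separatedangles}; picking $\theta_0$ achieving the favorable inequality then yields a $y$ with $\P^y_{i,j}[y_i/y_j\notin B^2B^{-2}]\ge 1/8$. The numerical bookkeeping (choosing how small $\tau$ must be, verifying the constant $1/8$) is routine once $\xi=O(\tau)$ and $\lambda/(\lambda-1)=O(1)$ are in hand; the conceptual content is entirely in getting the measure-transfer right, which the integrate-the-numerator trick handles.
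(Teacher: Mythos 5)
Your final ``integrate the numerator'' plan is essentially the paper's proof: the paper also averages the $\mu_y$-weighted indicator over $\theta$, swaps the $\theta$ and $(i,j)$ integrals, applies Lemma \ref{badratio} pointwise to pairs with bounded $|\cot(\phi_i-\phi_j)|$, and feeds in Lemma \ref{separatedangles} for the lower bound; your first plan (picking $\theta_0$ by making the $\mu_a$-measure of bad pairs small and then transferring to $\mu_y$) would indeed fail for exactly the reason you identify, so abandoning it was right.

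Three points in your corrected plan still need attention. First, the fluctuating denominator is a non-issue: since $u,v$ are orthonormal in $L_2^n$, the vector $y=u\cos\theta+v\sin\theta$ satisfies $\|y\|=1$ for every $\theta$, so $\P^y_{i,j}[Q]=\E_{i,j}y_i^2y_j^2\mathbbm{1}_Q(i,j)$ exactly; and note that had $\|y\|$ genuinely fluctuated, controlling $\E_\theta\|y\|^4$ \emph{from below} would be the wrong direction (to turn a large average numerator into a large ratio you need an upper bound on the denominator, pointwise or via the comparison $\E N<(c/C)\E\|y\|^4$). Second, the step ``the separated part is bounded below using Lemma \ref{separatedangles}'' is incomplete as stated: that lemma controls the $\mu_a\times\mu_a$-measure of the separated pairs, whereas after swapping integrals you need, for each fixed pair, the bound $\E_\theta\sin^2(\theta+\phi_i)\sin^2(\theta+\phi_j)=\tfrac14\bigl(\cos^2(\phi_i-\phi_j)+\tfrac12\bigr)\ge\tfrac18$; this short trigonometric average is precisely what converts the $a$-weighted conclusion of Lemma \ref{separatedangles} into the lower bound $\E_\theta\P^y_{i,j}[\,|\cot(\phi_i-\phi_j)|\le 2\,]\ge 1/6$, and it is the crux of why the measure transfer works, so it should be stated and proved rather than absorbed into ``routine bookkeeping.'' Third, $\cos(2(\phi_i-\phi_j))\le 1/2$ gives $\cot^2(\phi_i-\phi_j)\le 3$, i.e.\ $|\cot(\phi_i-\phi_j)|\le\sqrt3$, not $\le 1$ (the paper uses the bound $2$); this only changes the constant in the application of Lemma \ref{badratio} and the final comparison (an upper bound of order $\xi$ against $1/6$, which yields $1/8$ under $\tau\le 10^{-4}$ and $\lambda\ge 3/2$).
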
 

\begin{proof}
Let a typical unit vector $y$ in the subspace have $i$th coordinate $y_i=a_i\sin{(\theta+\phi_i)}$. We will bound the desired probability from below by adding an additional constraint. We will consider the probability that $\frac{y_i}{y_j}\notin B^2B^{-2} \text{ and } |\cot(\phi_i-\phi_j)|\le 2$, which can be found by calculating the expected probability that $|\cot(\phi_i-\phi_j)|\le 2$ and subtracting from it the probability of the event $\{\frac{y_i}{y_j}\in B^2B^{-2} \text{ and } |\cot(\phi_i-\phi_j)|\le 2\}$. 

We have for each $i,j$ that
\begin{align*}
\E_\theta\sin^2(\theta+\phi_i)\sin^2(\theta+\phi_j)&=\frac 14\E_\theta\Bigl(\cos(\phi_i-\phi_j)-\cos(2\theta+\phi_i+\phi_j)\Bigr)^2\\
&=\frac 14\Bigl(\cos^2(\phi_i-\phi_j)+\E_\theta\cos^2(2\theta+\phi_i+\phi_j)\Bigr)\\
&=\frac 14\Bigl(\cos^2(\phi_i-\phi_j)+\frac 12\Bigr)\geq \frac 18.
\end{align*}
Here $\E_\theta$ is just the usual average over $\theta\in[0,2\pi)$. Recall that $y$ is such that $\|y\|=1$ and $y_i=a_i \sin(\theta +\phi_i)$ for some such $\theta$. For any event $Q$ that depends on two coordinates $i,j$, we get
\begin{align*}\E_y\P^y_{i,j}[Q]
&=\E_\theta\E_{i,j}a_i^2a_j^2\sin^2(\theta+\phi_i)\sin^2(\theta+\phi_j)\mathbbm 1_Q(i,j)\\
&\ge\frac 18 \,\E_{i,j}a_i^2a_j^2\mathbbm 1_Q(i,j)\\
&=\frac{\|a\|^4}{8}\E ^a_{i,j} \mathbbm 1_Q(i,j)\\
&= \frac{1}{2}\P^a_{i,j}[Q],
\end{align*}
where we used that $\|a\|^2=2$.

It is easy to check the identity  $\cot^2 \a=\frac{2\cos^2\a}{1-\cos(2\a)}$, so if $\cos(2(\phi_i-\phi_j))\leq 1/2$, then $|\cot(\phi_i-\phi_j)|\leq 2$. Therefore,
\begin{align*}\E_y\P^y_{i,j}[|\cot(\phi_i-\phi_j)|\leq 2]&\geq\E_y\P^y_{i,j}[\cos(2(\phi_i-\phi_j))\leq 1/2]\\
&\ge \frac{1}{2}\,\P^a_{i,j}[\cos(2(\phi_i-\phi_j))\leq 1/2] \,
\ge \,  \frac 16,
\end{align*}
where the last inequality follows from Lemma \ref{separatedangles}.

Now, by Lemma \ref{badratio}, if $y$ is a random such vector, then for each $i,j$ the probability that $y_i/y_j\in B^2B^{-2}$ is at most $\frac{4\xi\lambda}{\pi(\lambda-1)}\Bigl( 4+ |\cot(\phi_i-\phi_j)| \Bigr) \leq 4\xi\Bigl( 4+ |\cot(\phi_i-\phi_j)| \Bigr)$, where the last inequality uses the fact that $\lambda \ge 3/2$, which implies that $\lambda/\pi(\lambda-1)\leq 1$.  Note also that since $y_i^2\leq a_i^2$ for each $i$, and $\E_iy_i^2=\frac 12\E_ia_i^2$, we have that $\P^y_{i,j}[Q(i,j)]\leq 4\P_{i,j}^a[Q(i,j)]$ for every event $Q(i,j)$ that depends on two coordinates $i,j$. It follows that
\begin{align*}\E_y\P^y_{i,j} [|\cot(\phi_i-\phi_j)|\leq 2\ & \hbox{and}\ y_i/y_j\in B^2B^{-2}] \\
&\leq 4\E_y\P^a_{i,j}[|\cot(\phi_i-\phi_j)|\leq 2\ \hbox{and}\ y_i/y_j\in B^2B^{-2}]\\
&\leq 16\xi(4+2)\\
&=96\xi.
\end{align*}

 Together with the estimate in the previous paragraph, this implies that
\[\P^y_{i,j}\bigl[|\cot(\phi_i-\phi_j)|\leq 2\ \hbox{and}\ y_i/y_j\notin B\bigr]\geq 1/6-96\xi.\] 

It is straightforward to check that our assumption that $\tau\leq 10^{-4}$ implies that this is at least $1/8$, and the result follows.
\end{proof}

\begin{corollary} \label{2Dsubspaces}
Provided that $2\lambda^2\be+6\e<1/8$, every great circle contains a point that does not belong to~$\Gamma_\e$.
\end{corollary}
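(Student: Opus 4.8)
The plan is simply to splice together the two results established immediately above. By definition a great circle is the unit sphere of some $2$-dimensional linear subspace $Y$ of $L_2^n$, so it is enough to produce a single unit vector $y\in Y$ with $y\notin\Gamma_\e$; the preceding Corollary~\ref{2Dsubspaces} will hand us a vector on which the relevant probability is small, and the corollary immediately following Lemma~\ref{Gammaeps} will then certify that such a vector cannot lie in $\Gamma_\e$.

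First I would record that we are in the regime in which those two statements apply: the standing parameter choices ($\lambda\ge 3/2$ — in fact $\lambda=4$ — and $k$ taken large enough that $\tau=1-s^{-2}=1-\lambda^{-1/k}\le 10^{-4}$, together with $\e<\tau^2$) are exactly the hypotheses of the first Corollary~\ref{2Dsubspaces} and of Lemma~\ref{Gammaeps}. Applying the first Corollary~\ref{2Dsubspaces} to the subspace $Y$ then yields a unit vector $y\in Y$ with $\P^y_{i,j}[y_i/y_j\notin B^2B^{-2}]\ge 1/8$, equivalently $\P^y_{i,j}[y_i/y_j\in B^2B^{-2}]\le 7/8$.

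On the other hand, the hypothesis $2\lambda^2\be+6\e<1/8$ gives $1-2\lambda^2\be-6\e>7/8$. Hence $y$ violates the defining inequality of the set $\{z:\P^z_{i,j}[z_i/z_j\in B^2B^{-2}]\ge 1-2\lambda^2\be-6\e\}$, and since by the corollary to Lemma~\ref{Gammaeps} this set contains $\Gamma_\e$, we conclude $y\notin\Gamma_\e$. As $y$ lies on the given great circle, that circle contains a point outside $\Gamma_\e$, which is what we wanted.

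There is no real obstacle in this step — all of the substantive work was done in Lemmas~\ref{switchprob}–\ref{badratio} and the first Corollary~\ref{2Dsubspaces} — so the only thing to watch is bookkeeping of constants: one should make sure that the numerical side conditions invoked by the earlier statements ($\lambda\ge 3/2$, $\tau\le 10^{-4}$, $\e<\tau^2$) are mutually compatible with each other and with the hypothesis $2\lambda^2\be+6\e<1/8$, and that all of them will indeed be met by the final parameter choices ($\lambda=4$, $k$ large, and $\be,\e$ sufficiently small), the required smallness of $\be$ being determined by the measure-concentration argument of the next section.
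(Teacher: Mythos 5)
Your argument is correct and is essentially the paper's own proof: apply the preceding corollary to the $2$-dimensional subspace to get a point $y$ on the circle with $\P^y_{i,j}[y_i/y_j\notin B^2B^{-2}]\geq 1/8$, and note that membership in $\Gamma_\e$ would force this probability to be below $2\lambda^2\be+6\e<1/8$ by Lemma \ref{Gammaeps}. The only cosmetic difference is that the paper makes explicit the (scale-invariance) remark allowing one to take $y$ to be a unit vector, which you assert directly; everything else, including the parameter bookkeeping, matches.
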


\begin{proof}
The previous Corollary \ref{2Dsubspaces}, applied to the subspace whose unit sphere is the great circle, gives us a point $y$ 
such that $\P^y_{i,j}[y_i/y_j\notin B^2B^{-2}]\geq 1/8$. Since the event in square brackets is invariant under positive scalar multiples, we may assume that $y$ is a unit vector and thus that it belongs to the great circle.

We showed earlier that if $z\in\Gamma_\e$, then $\P^z_{i,j}[z_i/z_j\in B^2B^{-2}]\geq 1-2\lambda^2\beta-6\e$. Hence, if $2\lambda^2\beta+6\e<1/8$, this implies that $y\notin\Gamma_\e$, and we are done.
\end{proof}

\section{Almost every point has an ``atypical'' image}\label{sec2.4}

In this section we want to show that there exists a subspace $X$ of linear dimension such that $X\cap S^{n-1} \subset \Delta^c= \{x\in S^{n-1}: \ \P^x_i[x_i\in A]\ge1-\be \}$, so that $\psi(X\cap S^{n-1})\subset \psi (\Delta^c) \subset \Gamma$. Indeed, we shall show that for an appropriate constant $\a>0$, almost all subspaces of dimension at most $\a n$ have this property. To this end, it will be sufficient to show that $\Delta$ has exponentially small measure. Note that
\[ \P[ x\in \Delta ]=\P [ \P^x_i[x_i \in A]<1-\be ]=\P [ \P^x_i[x_i \in B]\ge \be],
\]
where $B$ is, as before, the set \[\bigcup_m(B_m\cup(-B_m)),\] 
and $B_m=[s^{2mk-1},s^{2mk+1}]$ for each integer $m$. Let $\eta=s-1>0$ and as before let $\lambda=s^{2k}$. Then $B_m=[(1+\eta)^{-1} \lambda ^m, (1+\eta)\lambda^m]$. 

Let us say that a positive real number $t$ is an $\eta$-\emph{approximate power of} $\lambda$ if there exists an integer $m$ such that \[(1+\eta)^{-1}\lambda^m\leq t\leq(1+\eta)\lambda^m.\] 

For $\g \in [0,1]$ and $\xi\ge0$ define $\Delta_\g^\xi$ by 
\begin{align}\label{def: delta xi gamma}
    \Delta_\g^\xi=\Bigl\{x\in \R^n:\P^x_i[|x_i|\ \hbox{is a $\xi$-approximate power of}\ \lambda]\geq\g\Bigr\}.
\end{align} 
As mentioned before, we shall end up taking $\lambda=4$. For this reason, although $\Delta_\g^\xi$ depends on $\lambda$, we suppress this dependence in the notation. We shall be particularly interested in the set $\Delta_\be^\eta$, which, when restricted to $S^{n-1}$, is equal to the set $\Delta$ defined in \eqref{def: delta set}.

However, we shall also be interested in the set $\Delta_\be^0$, which we shall write simply as $\Delta_\be$. That~is, 
\begin{align}
    \Delta_\be=\Bigl\{x \in \R^n:\P_i^x[|x_i|\ \hbox{is a power of}\ \lambda]\geq\be\Bigr\}.
\end{align}

\begin{lemma} \label{approx}
If $y\in\Delta_\be^\eta$ then there exists $x\in\Delta_{\be(1-2\eta)}$ such that $\|x-y\|\leq\eta\|y\|$.
\end{lemma}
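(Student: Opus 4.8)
The plan is to produce $x$ from $y$ by ``snapping'' to an exact power of $\lambda$ each coordinate of $y$ whose absolute value is an $\eta$-approximate power of $\lambda$, and leaving every other coordinate unchanged. Write $S=\{i:|y_i|\ \text{is an }\eta\text{-approximate power of }\lambda\}$; the hypothesis $y\in\Delta_\be^\eta$ says precisely that $\mu_y(S)=\P^y_i[\,|y_i|\ \text{is an }\eta\text{-approximate power of }\lambda\,]\ge\be$. For each $i\in S$ pick an integer $m_i$ with $(1+\eta)^{-1}\lambda^{m_i}\le|y_i|\le(1+\eta)\lambda^{m_i}$ (any such $m_i$ works; non-uniqueness is harmless), and set $x_i=\sign(y_i)\lambda^{m_i}$, while $x_i=y_i$ for $i\notin S$. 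Since $y\neq 0$ we also have $x\neq 0$, so $\mu_x$ makes sense; and we may as well assume $\eta<1/2$, since otherwise $\be(1-2\eta)\le 0$ and $x=y$ already works.

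First I would check the metric bound $\|x-y\|\le\eta\|y\|$. For $i\in S$ the ratio $\lambda^{m_i}/|y_i|$ lies in $[(1+\eta)^{-1},1+\eta]$, so $|x_i-y_i|=|y_i|\,\bigl|\lambda^{m_i}/|y_i|-1\bigr|\le\eta|y_i|$; since $x$ and $y$ agree off $S$, $\|x-y\|^2=n^{-1}\sum_{i\in S}(x_i-y_i)^2\le\eta^2 n^{-1}\sum_i y_i^2=\eta^2\|y\|^2$.

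Then I would verify $x\in\Delta_{\be(1-2\eta)}$, i.e. $\P^x_i[\,|x_i|\ \text{is a power of }\lambda\,]\ge\be(1-2\eta)$. Every $i\in S$ has $|x_i|=\lambda^{m_i}$ a power of $\lambda$, so it is enough to bound $\mu_x(S)$ from below. Set $p=\mu_y(S)=\|P_Sy\|^2/\|y\|^2\ge\be$. From $\lambda^{m_i}\ge(1+\eta)^{-1}|y_i|$ we get $\|P_Sx\|^2\ge(1+\eta)^{-2}\|P_Sy\|^2=(1+\eta)^{-2}p\|y\|^2$, whereas $\|P_{S^c}x\|^2=\|P_{S^c}y\|^2=(1-p)\|y\|^2$ because those coordinates were not touched. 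Since $t\mapsto t/(t+(1-p)\|y\|^2)$ is increasing, substituting the lower bound for $\|P_Sx\|^2$ gives
\[\mu_x(S)=\frac{\|P_Sx\|^2}{\|P_Sx\|^2+\|P_{S^c}y\|^2}\ge\frac{(1+\eta)^{-2}p}{(1+\eta)^{-2}p+(1-p)}.\]
The denominator equals $1-p\bigl(1-(1+\eta)^{-2}\bigr)\le 1$, and $(1+\eta)^{-2}\ge 1-2\eta$ because $(1-2\eta)(1+\eta)^2=1-3\eta^2-2\eta^3\le 1$; hence the right-hand side is at least $(1-2\eta)p\ge(1-2\eta)\be$, as required.

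The one delicate point — the ``main obstacle'' — is getting the constant $1-2\eta$ and not merely $(1+\eta)^{-4}$: bounding $\|P_Sx\|^2$ from below and $\|x\|^2=\|P_Sx\|^2+\|P_{S^c}y\|^2$ from above by the factors $(1+\eta)^{\mp 2}$ \emph{separately} is too lossy. Keeping the unchanged mass $\|P_{S^c}y\|^2$ explicit in the denominator and invoking the monotonicity of $t\mapsto t/(t+b)$, so that only a one-sided bound on $\|P_Sx\|^2$ is used, is exactly what preserves the constant.
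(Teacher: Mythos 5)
Your proof is correct and follows essentially the same route as the paper: you round the $\eta$-approximately-power coordinates to exact powers of $\lambda$, obtain $\|x-y\|\leq\eta\|y\|$ coordinatewise, and then bound $\mu_x(S)$ from below by keeping the untouched mass $\|P_{S^c}y\|^2$ fixed and using only the one-sided bound $\|P_Sx\|^2\geq(1+\eta)^{-2}\|P_Sy\|^2$ together with $(1+\eta)^{-2}\geq 1-2\eta$. The paper's phrasing via a parameter $\zeta\in[(1+\eta)^{-2},(1+\eta)^2]$ minimized at $\zeta=(1+\eta)^{-2}$ is exactly your monotonicity observation, so the two arguments coincide.
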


\begin{proof}
We are given that $\P_i^y[|y_i|\ \hbox{is an $\eta$-approximate power of}\ \lambda]\geq\be$. Let $J$ be the set of all $i$ such that $|y_i|$ is an $\eta$-approximate power of $\lambda$. For each $i\in J$ let $|x_i|$ be the nearest power of $\lambda$ to $|y_i|$ and let $x_i$ have the same sign as $y_i$. For each $i\notin J$ let $x_i=y_i$. Then $|x_i-y_i|\leq\eta|y_i|$ for $i\in J$, so, writing $P_J$ for the coordinate projection to $J$, we have that 
\[\|x-y\|^2=\frac 1n\sum_{i\in J}|x_i-y_i|^2\leq\eta^2\frac 1n\sum_{i\in J}|y_i|^2=\eta^2\|P_Jy\|^2\leq\eta^2\|y\|^2.\]
We now need a lower bound for $\|P_Jx\|^2/\|x\|^2$. We know that $\|P_Jy\|^2\geq\be\|y\|^2$, and also that $\|P_Jx\|^2-\|P_Jy\|^2=\|x\|^2-\|y\|^2$. We also have for each $i\in J$ that $(1+\eta)^{-2}y_i^2\leq x_i^2\leq(1+\eta)^2y_i^2$, which implies that $(1+\eta)^{-2}\|P_Jy\|^2\leq\|P_Jx\|^2\leq(1+\eta)^2\|P_Jy\|^2$. Therefore, 
\[\frac{\|P_Jx\|^2}{\|x\|^2}=\frac{\zeta\|P_Jy\|^2}{\|y-P_Jy\|^2+\zeta\|P_Jy\|^2}\]
for some $\zeta\in[(1+\eta)^{-2},(1+\eta)^2]$. The right-hand side is minimized when $\zeta=(1+\eta)^{-2}$, and then it is at least $\zeta\be\ge (1-2\eta)\be$, which finishes the proof of the lemma.
\end{proof}

Our next aim is to prove an upper bound for the volume of the $\eta$-expansion of  $\Delta _{\be(1-2\eta)}$, which by the above lemma contains $\Delta_\be^\eta$. We shall do this in a series of simple steps.

\begin{lemma} \label{simplex}
For every constant $C>1$, the number of sequences $(a_1,\dots,a_m)$ of positive integers that add up to at most $Cm$ is at most $(Ce)^m$.
\end{lemma}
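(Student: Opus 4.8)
The plan is to reduce the count to a single binomial coefficient and then apply a standard estimate. First I would count, for a fixed integer $N$, the number of sequences $(a_1,\dots,a_m)$ of \emph{positive} integers with $a_1+\dots+a_m=N$ exactly: this is the number of compositions of $N$ into $m$ parts, which equals $\binom{N-1}{m-1}$ (choose the $m-1$ ``cut points'' among the $N-1$ gaps between $N$ unit tokens). Summing this over all $N$ with $m\le N\le \lfloor Cm\rfloor$ and using the hockey-stick identity $\sum_{N=m}^{M}\binom{N-1}{m-1}=\binom{M}{m}$, the total number of sequences of positive integers adding up to at most $Cm$ is exactly $\binom{\lfloor Cm\rfloor}{m}$.

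It then remains to verify that $\binom{\lfloor Cm\rfloor}{m}\le (Ce)^m$. This follows from the standard inequality $\binom{n}{k}\le (en/k)^k$, which is itself immediate from $\binom{n}{k}\le n^k/k!$ together with $k!\ge (k/e)^k$ (the latter being a consequence of $e^k=\sum_{j\ge 0}k^j/j!\ge k^k/k!$). Taking $n=\lfloor Cm\rfloor$ and $k=m$, and using $\lfloor Cm\rfloor\le Cm$, we get $\binom{\lfloor Cm\rfloor}{m}\le \bigl(e\lfloor Cm\rfloor/m\bigr)^m\le (Ce)^m$, as required.

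The argument is entirely routine, so I do not expect any real obstacle; the only mild point of care is that $Cm$ need not be an integer, but this is harmless since the constraint $a_1+\dots+a_m\le Cm$ is the same as $a_1+\dots+a_m\le\lfloor Cm\rfloor$, and the bound $\lfloor Cm\rfloor\le Cm$ is all that is needed at the final step. (An alternative to invoking the hockey-stick identity is to introduce a nonnegative slack coordinate and count compositions of $\lfloor Cm\rfloor$ into $m+1$ parts directly, but this leads to the same binomial coefficient.)
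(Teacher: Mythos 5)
Your proof is correct, and it takes a genuinely different route from the paper. You count exactly: the number of compositions of $N$ into $m$ positive parts is $\binom{N-1}{m-1}$, the hockey-stick identity collapses the sum over $N\le\lfloor Cm\rfloor$ to the single binomial coefficient $\binom{\lfloor Cm\rfloor}{m}$, and the standard bound $\binom{n}{k}\le (en/k)^k$ finishes. The paper instead argues geometrically: to each admissible sequence $a$ it associates the unit cube $\{x: a_i-1\le x_i<a_i\}$; these cubes are pairwise disjoint and all lie in the simplex $\{x\ge 0:\sum_i x_i\le Cm\}$, whose volume is $(Cm)^m/m!\le (Ce)^m$. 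Both arguments ultimately rest on the same estimate $m!\ge (m/e)^m$, but yours yields the sharper exact count $\binom{\lfloor Cm\rfloor}{m}$ before any estimation, while the paper's volume comparison avoids combinatorial identities altogether and is the kind of argument that transfers painlessly to other convex constraint regions (the paper in fact only needs the linear constraint, reducing the exponential constraint of Corollary \ref{jensen} to it via Jensen). Either proof serves the application equally well, since only the $(Ce)^m$ bound is used later.
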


\begin{proof}
For each sequence $a=(a_1,\dots,a_m)$ let $C_a$ be the unit cube of points $x\in\R^n$ such that $a_i-1\leq x_i<a_i$ for every $i$. Then if $a\ne b$, the unit cubes $C_a$ and $C_b$ are disjoint. Also, if $a$ consists of positive integers and $\sum_ia_i\leq Cm$, then the cube $C_a$ is contained in the convex hull of the points $0$ and $Cme_i$, where $e_1,\dots,e_m$ is the standard basis of $\R^m$. But this simplex has volume $(Cm)^m/m!\leq(Ce)^m$. This proves the result.
\end{proof}

\begin{corollary} \label{jensen}
Let $\lambda,\,C>1$ be real numbers and let $m$ be a positive integer. Then the number of positive integer sequences $(a_1,\dots,a_m)$ such that $\lambda^{a_1}+\dots+\lambda^{a_m}\leq Cm$ is at most $(e\log_\lambda C)^m$.
\end{corollary}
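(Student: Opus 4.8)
The plan is to reduce Corollary \ref{jensen} to Lemma \ref{simplex} by a substitution together with a convexity (Jensen) argument. The key observation is that if $(a_1,\dots,a_m)$ is a sequence of positive integers with $\lambda^{a_1}+\dots+\lambda^{a_m}\leq Cm$, then by convexity of the exponential function $t\mapsto\lambda^t$ we have $\lambda^{(a_1+\dots+a_m)/m}\leq m^{-1}\sum_i\lambda^{a_i}\leq C$, and hence $a_1+\dots+a_m\leq m\log_\lambda C$. In other words, every sequence counted by Corollary \ref{jensen} is in particular a sequence of positive integers summing to at most $(\log_\lambda C)m$.

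First I would record the trivial case: if $\log_\lambda C\leq 1$ there are no such sequences (a sum of $m$ positive integers is at least $m$), and $(e\log_\lambda C)^m$ is a valid, if vacuous, bound; so assume $\log_\lambda C>1$. Then I would apply Lemma \ref{simplex} with the constant $\log_\lambda C$ in place of $C$: the number of positive integer sequences of length $m$ summing to at most $(\log_\lambda C)m$ is at most $(e\log_\lambda C)^m$. Since the set of sequences in Corollary \ref{jensen} is contained in this set, the same bound applies, which is exactly the claimed estimate.

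The only point needing a little care is the appeal to convexity: one should note that $\lambda>1$ guarantees $t\mapsto\lambda^t$ is (strictly) convex, so Jensen's inequality applied to the uniform distribution on $\{a_1,\dots,a_m\}$ gives $\lambda^{\bar a}\leq m^{-1}\sum_i\lambda^{a_i}$ with $\bar a=m^{-1}\sum_i a_i$, and then taking $\log_\lambda$ of both sides (a monotincreasing operation since $\lambda>1$) yields $\bar a\leq\log_\lambda(m^{-1}\sum_i\lambda^{a_i})\leq\log_\lambda C$. I do not anticipate a genuine obstacle here; the main (very minor) subtlety is simply to invoke Lemma \ref{simplex} with the right constant and to dispose of the degenerate regime $\log_\lambda C\leq 1$ so that Lemma \ref{simplex}'s hypothesis $C>1$ is met.
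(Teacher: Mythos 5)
Your proof is correct and follows essentially the same route as the paper: Jensen's inequality (convexity of $t\mapsto\lambda^t$) converts the constraint $\lambda^{a_1}+\dots+\lambda^{a_m}\leq Cm$ into $a_1+\dots+a_m\leq(\log_\lambda C)m$, and then Lemma \ref{simplex} with constant $\log_\lambda C$ gives the bound $(e\log_\lambda C)^m$. Your extra handling of the degenerate regime is a reasonable refinement (though note that at the boundary $\log_\lambda C=1$ the all-ones sequence does satisfy the constraint, so ``no such sequences'' is slightly off; the bound $e^m\geq 1$ still holds trivially), and it does not change the substance of the argument.
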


\begin{proof}
If $\lambda^{a_1}+\dots+\lambda^{a_m}\leq \lambda^am$, then by Jensen's inequality $\lambda^{(a_1+\dots+a_m)/m}\leq \lambda^a$, and hence $a_1+\dots+a_m\leq am$. Therefore, by Lemma \ref{simplex} the number of such sequences is at most $(ea)^m$. By the hypothesis we have $a=\log_\lambda C$, and the result follows.
\end{proof}

\begin{corollary} \label{lognet}
Let $\lambda>1$ be a real number, let $m$ be a positive integer, and let $\eta>0$. Let $\Omega$ be the set of all sequences $(x_1,\dots,x_m)$ such that $x_1^2+\dots+x_m^2\leq C^2m$ and each $|x_i|$ is a power of $\lambda$. Then there is an $\eta$-net of $\Omega$ of cardinality at most $(2e\log_\lambda(\lambda^2C/\eta))^m$.
\end{corollary}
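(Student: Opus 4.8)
The plan is to build the net by \emph{coarsening} the sequences in $\Omega$ — sweeping every coordinate of absolute value less than $\eta$ down to $0$ — and then to count the coarsened sequences by essentially the argument of Corollary~\ref{jensen}. Distances are measured in the normalized metric $\|x-y\|=\bigl(m^{-1}\sum_i(x_i-y_i)^2\bigr)^{1/2}$ used throughout the paper, and I will assume $\eta\le C$, since otherwise $\Omega$ lies in a ball of radius $<\eta$ and a single point is a net.

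First I would fix the integer $q=\lceil\log_\lambda\eta\rceil$, so that $\lambda^q\ge\eta$, and for $x\in\Omega$ let $y$ agree with $x$ on each coordinate with $|x_i|\ge\eta$ and vanish on all others. At most $m$ coordinates are altered, each by less than $\eta$, so $\|x-y\|<\eta$; the surviving coordinates are signed powers $\pm\lambda^{a_i}$ with $a_i\ge q$ (the integer $a_i=\log_\lambda|x_i|$ is at least $\log_\lambda\eta$), and $\sum_i y_i^2\le\sum_i x_i^2\le C^2m$. Hence
\[\mathcal N=\Bigl\{y\in\R^m:\ y_i\in\{0\}\cup\{\pm\lambda^a:a\in\Z,\ a\ge q\}\text{ for every }i,\ \ \sum_i y_i^2\le C^2m\Bigr\}\]
is an $\eta$-net of $\Omega$, and it remains to bound $|\mathcal N|$.

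To count $\mathcal N$, I would first choose the signs of the nonzero coordinates (at most $2^m$ choices) and then encode the magnitudes by a single sequence $(b_1,\dots,b_m)$ of nonnegative integers: $b_i=0$ records $y_i=0$, and $b_i\ge1$ records $|y_i|=\lambda^{q+b_i-1}$. Then $y_i^2=\lambda^{2q-2}(\lambda^2)^{b_i}$ whenever $y_i\ne0$, so $\sum_i y_i^2\le C^2m$ yields
\[\sum_{i:\,b_i\ge1}(\lambda^2)^{b_i}\le\lambda^{2-2q}C^2m\le(\lambda C/\eta)^2m,\]
the second inequality being exactly where $\lambda^q\ge\eta$ is used. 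Since each coordinate with $b_i=0$ contributes $(\lambda^2)^0=1$, every such sequence also satisfies $\sum_i(\lambda^2)^{b_i}\le\bigl((\lambda C/\eta)^2+1\bigr)m\le 2(\lambda C/\eta)^2m$, and now we are in exactly the situation of Corollary~\ref{jensen} (with base $\lambda^2$), except that the exponents are allowed to be nonnegative rather than positive. Re-running its proof — Jensen's inequality to bound $\sum_i b_i$, then the lattice-point count of Lemma~\ref{simplex} in the nonnegative setting — bounds the number of such sequences, and multiplying by $2^m$ and tidying the logarithms gives a bound of the shape $\bigl(2e\log_\lambda(\lambda^2C/\eta)\bigr)^m$ claimed (up to the cosmetic difference between $\lambda^2C/\eta$ and $\sqrt2\,\lambda^2C/\eta$ coming from the harmless factor $2$ above).

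The step I expect to be most delicate is precisely this last piece of bookkeeping: Corollary~\ref{jensen} as stated is about \emph{positive} integer exponents, whereas the zero coordinates of the net points force us to count \emph{nonnegative} exponent sequences, and one must either re-derive the lattice-point estimate in that generality or absorb the zeros into the right-hand side as above, while still having the accumulated constants fit inside $2e\log_\lambda(\lambda^2C/\eta)$. There is room to do so, since $\log_\lambda(\lambda^2C/\eta)=2+\log_\lambda(C/\eta)\ge2$ absorbs additive losses and the factor $2e$ comfortably covers the $2^m$ from signs together with the $e^m$ inherent in counting lattice points in a simplex. (One may instead round small coordinates \emph{up} to $\pm\lambda^q$ rather than down to $0$; then every net coordinate is a genuine signed power of $\lambda$, there are no zero coordinates, and Corollary~\ref{jensen} applies verbatim — the price being that the right-hand side of the constraint acquires the same harmless constant factor.) It is also worth noting why a logarithmic bound is even plausible: a crude packing argument gives only an $\eta$-net of size about $(C/\eta)^m$ for the radius-$C$ ball that contains $\Omega$, and the exponential improvement to $(\log(C/\eta))^m$ is exactly what is bought by restricting the coordinates to the geometric progression $\{\lambda^a\}$, which is the content of Corollary~\ref{jensen}.
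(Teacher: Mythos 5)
Your main route is essentially the paper's proof: replace the tiny coordinates by canonical values and then count the resulting discrete set by applying Corollary~\ref{jensen} with base $\lambda^2$, multiplying by $2^m$ for the signs, so the substance of your argument is sound. The only real difference is the rounding convention: the paper rounds every coordinate with $|x_i|\le\eta/\lambda$ \emph{up} to $\mathrm{sign}(x_i)\lambda^{-t}$, where $t$ is chosen so that $\eta/\lambda\le\lambda^{-t}<\eta$; then every net coordinate is a nonzero signed power of $\lambda$ with exponent at least $-t$, shifting the exponents by $t+1$ puts one verbatim in the setting of Corollary~\ref{jensen}, and since $t\le 1+\log_\lambda(1/\eta)$ this lands exactly on $(2e\log_\lambda(\lambda^2C/\eta))^m$. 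Your rounding-to-zero version forces you to absorb the zero coordinates (an extra $+m$ in the constraint), and what it actually yields, as you compute, is $\bigl(2e\log_\lambda(\sqrt2\,\lambda^2C/\eta)\bigr)^m$ --- harmless for the application, but strictly weaker than the stated bound, so as written you have not quite proved the statement. Moreover, your parenthetical fix of rounding small coordinates up to $\pm\lambda^q$ with $q=\lceil\log_\lambda\eta\rceil$ does not repair this: $\lambda^q$ can be as large as (almost) $\lambda\eta$, so a coordinate near $0$ is moved by more than $\eta$ and you only obtain a $\lambda\eta$-net. The paper's choice --- threshold $\eta/\lambda$ for which coordinates to round, target value the power of $\lambda$ lying in $[\eta/\lambda,\eta)$ --- is precisely what keeps every per-coordinate error below $\eta$ while bounding the exponents from below, and it recovers the exact constant in the statement.
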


\begin{proof}
Let $x\in\Omega$. For each $i$ such that $|x_i|\leq\eta/\lambda$, replace $x_i$ by $\lambda^{-t}\mathop{\mathrm{sign}}(x_i)$, where $t$ is chosen in such a way that $\eta/\lambda\leq \lambda^{-t}<\eta$, and let the resulting vector be $y$. Then $|x_i-y_i|\leq\eta$ for every $i$, so $\|x-y\|\leq\eta$.  Now let $\Omega'$ consist of all vectors $x\in\Omega$ such that each $|x_i|$ is equal to $\lambda^{a_i}$ for some integer $a_i$ with $a_i\geq-t$. We have just shown that $\Omega'$ is an $\eta$-net of $\Omega$.  

The number of points in $\Omega'$ with positive coordinates is equal to the number of integer sequences $(a_1, \dots ,a_m)$ such that each $a_i$ is at least $-t$ and $\lambda^{2a_1}+\dots+\lambda^{2a_m}\leq C^2m$. Rescaling, we see that is the number of positive-integer sequences $(a_1,\dots,a_m)$ such that $\lambda^{2a_1}+\dots+\lambda^{2a_m}\leq \lambda^{2(t+1)}C^2m$, which by Corollary \ref{jensen} is at most $(e(t+1+\log_\lambda C))^m$. Since there are $2^m$ possible choices of signs, the size of $\Omega'$ is at most $(2e(t+1+\log_\lambda C))^m$. Noting that $t\leq\log_\lambda(\lambda/\eta)=1+\log_\lambda(1/\eta)$, we obtain the result.
\end{proof}

The important thing about the bound above is that the number we raise to the power $m$ depends logarithmically on $\eta$. This shows that an $\eta$-net of $\Omega$ is \emph{much} smaller than an $\eta$-net of the full sphere of radius $C$. 

We shall need a lemma concerning the sizes of nets of unit balls. It is standard, but the version we give is less commonly used, so for convenience we include a proof. (The argument is essentially due to Rogers~\cite{rogers1957}.)

\begin{lemma}\label{net}
Let $X$ be an $n$-dimensional normed space with unit ball $B_X$ and let $\d>0$. If $n$ is sufficiently large, then $X$ contains a $\d$-net of $B_X$ of cardinality at most $2en\log( n) (1+\frac 1\d)^n$.
\end{lemma}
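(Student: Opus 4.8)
The plan is to use a volumetric (packing) argument of Rogers type. First I would take a maximal $\delta$-separated subset $N$ of the unit ball $B_X$ — that is, a set of points pairwise at distance more than $\delta$ that cannot be enlarged. By maximality, $N$ is automatically a $\delta$-net of $B_X$, so it suffices to bound $|N|$. The standard trick is that the open balls of radius $\delta/2$ centred at points of $N$ are pairwise disjoint and all contained in $(1+\delta/2)B_X$, which gives $|N|\,(\delta/2)^n\mathrm{vol}(B_X)\le(1+\delta/2)^n\mathrm{vol}(B_X)$ and hence $|N|\le(1+2/\delta)^n$. However, this crude bound loses a factor and gives $(1+2/\delta)^n$ rather than the claimed $(1+1/\delta)^n$ with the polynomial prefactor, so I would instead follow Rogers more carefully.

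The improvement comes from not wasting the room near the boundary: instead of inflating $B_X$ to $(1+\delta/2)B_X$, one covers $B_X$ by translates in a smarter way. The Rogers argument I would reproduce is as follows. Fix a small $\rho>0$ to be optimized, consider a large ball $RB_X$ with $R$ tending to infinity, and place a random translate of a $\delta$-separated configuration obtained by scaling: more simply, take a $\delta$-separated set $N\subset B_X$ as above, but now bound $|N|$ by comparing volumes of balls of radius $\delta/2$ inside $(1+\delta/2)B_X$ and then optimize by allowing a slightly different construction. The cleaner route giving the stated bound with the $2en\log n$ factor is: cover $B_X$ by a $\delta$-net built from a lattice-like arrangement, using the fact that for $n$ large one can tile space by translates of a body of volume at most $(\delta)^n\mathrm{vol}(B_X)/(2en\log n)^{-1}$... — concretely, I would cite the known refinement that $B_X$ admits a $\delta$-net of size at most $c\,n\log n\,(1+1/\delta)^n$, whose proof uses a random covering: choose points of $(1+1/\delta)B_X$ (rescaled) independently and uniformly, and show that $2en\log n$ of them suffice in expectation to cover $B_X$ up to translation by the small ball $\delta B_X$, via a union bound over an auxiliary fine net.

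Concretely, the steps I would carry out: (1) reduce to bounding a maximal $\delta$-separated set; (2) set up the random covering of $(1+\delta)B_X$ by $M$ uniformly random translates of $\delta B_X$ and compute that the expected uncovered volume is $\mathrm{vol}((1+\delta)B_X)\bigl(1-(\delta/(1+\delta))^n\bigr)^M$; (3) choose $M=\lceil 2en\log n\,(1+1/\delta)^n\rceil$ so that, using $1-x\le e^{-x}$ and $(1+1/\delta)^n=((1+\delta)/\delta)^n$, the expected uncovered volume drops below the volume of a single small ball, forcing (for $n$ large, where the $\log n$ beats lower-order terms) the existence of a covering with no uncovered point after absorbing leftovers into the net; (4) convert a covering of $(1+\delta)B_X$ by translates of $\delta B_X$ into a $\delta$-net of $B_X$ by taking the centres, rescaling, and discarding those outside $B_X$. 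The bookkeeping in step (3) — making the elementary inequalities line up so that exactly the factor $2en\log n$ and the power $(1+1/\delta)^n$ come out, and checking what "sufficiently large $n$" must mean — is the only real obstacle; everything else is the textbook Rogers/Erdős–Rogers packing computation.
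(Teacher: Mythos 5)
Your overall strategy --- a Rogers-type random covering by translates of $\delta B_X$ --- is the same as the paper's, but as written there is a genuine gap at the decisive step. In step (3) you pass from ``the expected uncovered volume is below the volume of a single small ball'' to the existence of a complete covering ``after absorbing leftovers into the net''. That deduction is not valid: a set of tiny volume can still be nonempty, so some points of $B_X$ may remain at distance more than $\delta$ from every random centre, and the absorbing step is exactly the part of Rogers' argument (patching the uncovered residue, e.g.\ by a fine lattice of cells with one extra translate per bad cell) that you never carry out. Moreover, the bookkeeping you defer as ``the only real obstacle'' is precisely where the factor $2en\log n$, the exponent $(1+\frac1\delta)^n$, and the meaning of ``$n$ sufficiently large'' come from, so the proposal stops short of the actual content of the lemma. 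Your step (1), reducing to a maximal $\delta$-separated set, is also a false start: it is never used afterwards and cannot give better than $(1+2/\delta)^n$.

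The paper's proof sidesteps the leftover problem entirely: rather than bounding uncovered volume, it first fixes a deterministic $\rho$-net of $B_X$ of size at most $(3/\rho)^n$, throws $N$ uniform random points into $(1+\zeta)B_X$ with $\zeta=\delta-\rho$, and applies a union bound over the finitely many net points; each net point is missed by all random balls of radius $\zeta$ with probability at most $\exp\bigl(-N(\tfrac{\zeta}{1+\zeta})^n\bigr)$, so $N>n\log(\tfrac3\rho)\bigl(1+\tfrac1{\delta-\rho}\bigr)^n$ suffices, and then every point of $B_X$ is within $\rho+\zeta=\delta$ of some random centre. Choosing $\rho=\delta\frac{\delta+1}{n+\delta+1}$ gives $1+\frac1{\delta-\rho}=(1+\frac1n)(1+\frac1\delta)$, and for large $n$ the estimates $\log(\tfrac3\rho)\le\tfrac32\log n$ and $(1+\tfrac1n)^n<\tfrac{4e}{3}$ yield exactly $2en\log n\,(1+\tfrac1\delta)^n$. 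You do mention ``a union bound over an auxiliary fine net'' in passing; executing that version, with the radius shrunk to $\delta-\rho$ and $\rho$ optimized as above (or, alternatively, completing the expected-volume route with an explicit patching count), is what would turn your sketch into a proof.
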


\begin{proof}
Let $\rho>0$ be a small real number to be chosen later. (It will in fact depend on $n$.) Then a standard volume estimate shows that there is an $\rho$-net of $B_X$ of size at most $(3/\rho)^n$. We shall now cover every point of this net with a union of balls of radius $\d-\rho$ in order to obtain our $\d$-net, and then we will optimize over $\rho$.

To do this, let $\zeta=\d-\rho$ and pick points $x_1,\dots,x_N$ uniformly at random from $(1+\zeta)B_X$. If $y$ is a point in the $\rho$-net, then the probability that $y$ is not within any of the balls of radius $\zeta$ about the $x_i$ is $(1-(\frac{\zeta}{1+\zeta})^n)^N\leq\exp(-N(\frac\zeta{1+\zeta})^n)$. Therefore, we are done as long as
\[\Bigl(\frac 3\rho\Bigr)^n\exp\Bigl(-N\Bigl(\frac\zeta{1+\zeta}\Bigr)^n\Bigr)<1,\]
which is satisfied if $N>n\log(\frac 3\rho)(1+\frac 1{\d-\rho})^n$. 

It can be checked that $1+\frac 1{\d-\rho}=(1+\frac 1n)(1+\frac 1\d)$ when $\rho=\d(\frac{\d+1}{n+\d+1})$. For this value of $\rho$ and for $n$ is sufficiently large, we have that $\log(\frac 3\rho)<\log(\frac{3n}\d)$, which is at most $\frac 32\log n$. We also have that $(1+\frac 1n)^n<\frac {4e}3$ when $n$ is sufficiently large, and putting these estimates together we find that we can take $N$ to be $2en\log n(1+\frac 1\d)^n$, as claimed.
\end{proof}

Next, we need a simple technical lemma about the largest proportion of the unit sphere of $L_2^n$ that can be covered by a ball of radius $\d$. 

\begin{lemma} \label{cone}
Let $B_\d (x)$ be a closed ball of radius $\d$ about a point $x$ in $L_2^n$. If $n$ is sufficiently large, then the probability that a random point of the unit sphere of $L_2^n$ lies in $B_\d (x)$ is at most $2\d^n$.
\end{lemma}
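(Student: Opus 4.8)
The plan is to reduce the statement to a concrete computation with the normalized surface measure on $S^{n-1}$, the unit sphere of $L_2^n$. By rotational invariance I may assume the centre $x$ of the ball lies along the first coordinate axis, say $x=(c,0,\dots,0)$ for some $c\geq 0$; in fact the worst case will be a suitable $c$, but it is cleanest to bound the probability for an arbitrary $x$ by enlarging the ball if necessary so that I can take the worst-case centre. A point $y$ on the unit sphere of $L_2^n$ satisfies $\|y\|^2=n^{-1}\sum y_i^2=1$, so $\sum y_i^2=n$; writing $y=\sqrt n\,\omega$ with $\omega$ uniform on the \emph{standard} Euclidean sphere of radius $1$, the condition $\|y-x\|\leq\d$ becomes $n^{-1}\sum(y_i-x_i)^2\le\d^2$, i.e. a spherical cap condition on $\omega$. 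So the whole question is the standard one: what is the maximal normalized measure of a spherical cap of a given angular radius, and I want to show that the cap forced by radius $\d$ (in the $L_2^n$ metric) has measure at most $2\d^n$.

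First I would record the elementary fact that $\|y-x\|\le\d$ for unit vectors $y,x$ (in $L_2^n$) forces $\langle y,x\rangle \ge \|x\|^2 + \tfrac12(1 - \|x\|^2) - \tfrac12\d^2$ or, in the cleanest case $\|x\|=1$, forces $\langle y,x\rangle\ge 1-\d^2/2$, which is an angular-radius bound $\cos\theta\ge 1-\d^2/2$, hence $\sin\theta\le\d\sqrt{1-\d^2/4}\le\d$; if $\|x\|\ne 1$ one gets an even smaller cap, so it suffices to treat $\|x\|=1$ (a ball centred off the sphere meets the sphere in a set contained in a cap of no larger angular radius). Then the measure of a cap $\{\omega:\langle\omega,u\rangle\ge t\}$ on the standard $(n-1)$-sphere, with $t=1-\d^2/2$, is a classical integral
\[
\P[\text{cap}] = \frac{\int_t^1 (1-u^2)^{(n-3)/2}\,du}{\int_{-1}^1 (1-u^2)^{(n-3)/2}\,du}.
\]
Bounding the numerator by $(1-t^2)^{(n-3)/2}(1-t)\le \d^{n-3}\cdot \d^2 = \d^{n-1}$ (using $1-t^2\le\d^2$ and $1-t\le\d^2$), and bounding the denominator below by an absolute constant times $n^{-1/2}$ (the standard Laplace/Wallis estimate $\int_{-1}^1(1-u^2)^{(n-3)/2}\,du = B(1/2,(n-1)/2)\sim\sqrt{2\pi/n}$), gives a bound of the shape $C\sqrt n\,\d^{n-1}$. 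For $n$ sufficiently large this is comfortably at most $2\d^n$ provided $\d$ is bounded away from $0$; if $\d$ is small one wants to be slightly more careful, so I would instead keep one factor of $\d$ in reserve: the factor $C\sqrt n$ is absorbed into $\d^{-1}\cdot(\text{something}\to 0)$ only if $\d$ is not too small, so the genuinely clean statement is ``for $n$ large (depending on $\d$)'', exactly as the lemma is phrased, and then $C\sqrt n\,\d^{n-1}\le 2\d^n$ is just $C\sqrt n\le 2\d$... which is false for fixed small $\d$.

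So the argument I actually want is slightly different and this is the one subtle point: rather than pull out $\d^{n-1}$ and fight a $\sqrt n$, I should compare the numerator integral to the denominator integral \emph{ratio-wise} and show the ratio is at most $\d^n$ for $n$ large. Concretely, $\int_t^1(1-u^2)^{(n-3)/2}du \le \int_t^1 (1-u)^{(n-3)/2}(1+u)^{(n-3)/2}du \le 2^{(n-3)/2}\frac{(1-t)^{(n-1)/2}}{(n-1)/2}$, while $\int_{-1}^1(1-u^2)^{(n-3)/2}du \ge \int_{-1/n}^{1/n}(1-u^2)^{(n-3)/2}du \ge \tfrac2n(1-n^{-2})^{(n-3)/2}\ge \tfrac1n$ for $n$ large; with $1-t=\d^2/2$ the numerator is at most $2^{(n-3)/2}(\d^2/2)^{(n-1)/2}\cdot\frac{2}{n-1} = \frac{2}{n-1}\,\d^{n-1}\,2^{-1}$, so the ratio is at most $\frac{2n}{n-1}\cdot\d^{n-1}\le 3\d^{n-1}$, and then a single further factor of $\d$ from refining $\sin\theta\le\d$ to $\sin\theta\le \d/2$ when $n$ is large (which follows since in fact $\sin^2\theta = 1-t^2 \le \d^2 - \d^4/4$, combined with the slack in the constants) is not quite automatic — the honest fix is to note $3\d^{n-1}\le 2\d^n$ iff $\d\ge 3/2$, which again fails. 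The main obstacle, then, is genuinely the small-$\d$ regime, and the resolution the authors surely intend is: absorb the polynomial factor $\operatorname{poly}(n)$ into $\d$ by exploiting that one is allowed to take $n$ as large as one likes \emph{after} $\d$ is fixed, combined with a sharper bound on the numerator that gains an honest extra factor of $\d^{c n}$ (for instance $1-u^2 \le \d^2 e^{-(u-t)\cdot(\text{something})}$-type refinements, or simply $(1-u^2)\le (1-t^2)$ is wasteful and should be replaced by $(1-u^2)\le (1-t^2)(1-(u-t))$ near $u=t$). I would therefore structure the proof as: (i) reduce to a cap via the inner-product bound, handling $\|x\|\le 1$ by monotonicity; (ii) write the cap measure as the Beta-function ratio above; (iii) bound the numerator by $e^{-cn}\d^{n-1}$-type decay rather than just $\d^{n-1}$, using that most of the mass of $(1-u^2)^{(n-3)/2}$ on $[t,1]$ is within $O(1/n)$ of $t$; (iv) bound the denominator below by $c/\sqrt n$; (v) conclude that for $n\ge n_0(\d)$ the ratio is at most $2\d^n$. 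Step (iii) is where the real work is and is the step I expect to be the main obstacle; everything else is bookkeeping with standard sphere integrals.
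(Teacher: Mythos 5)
Your submission is a plan rather than a proof: you yourself record that both of your completed computations fail (the comparison $C\sqrt n\,\delta^{n-1}\le 2\delta^n$, and later $3\delta^{n-1}\le 2\delta^n$), and the decisive step (your step (iii)) is explicitly deferred as "where the real work is". That step is indeed the whole content of the lemma, and your guess about what it should yield is off target: you cannot gain an $e^{-cn}$ factor beyond $\delta^{n-1}$, because the cap measure genuinely is of order $\delta^{n-1}/\sqrt n$; what you can and must gain is a factor of order $1/n$ over your crude endpoint-times-length bound for the numerator. This is a one-line integration estimate: since $u/t\ge 1$ on $[t,1]$, one has $\int_t^1(1-u^2)^{(n-3)/2}\,du\le \tfrac 1t\int_t^1 u(1-u^2)^{(n-3)/2}\,du=\tfrac{(1-t^2)^{(n-1)/2}}{(n-1)t}\le \tfrac{\delta^{n-1}}{(n-1)t}$, and together with your lower bound of order $1/\sqrt n$ for the full integral this gives a cap measure at most $C\delta^{n-1}/\sqrt n$, which is at most $2\delta^n$ as soon as $n\ge (C/2\delta)^2$ --- exactly the "for $n$ sufficiently large (given $\delta$)" form of the statement. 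A second, smaller error: your reduction "if $\|x\|\ne 1$ one gets an even smaller cap, so it suffices to treat $\|x\|=1$" is false; the extremal centre has $\|x\|=\sqrt{1-\delta^2}$ and produces a cap of angular radius $\arcsin\delta$, which is \emph{larger} than the cap cut out by a centre on the sphere. The correct general reduction is simply that for any centre the intersection is a cap whose boundary circle has diameter at most $2\delta$, hence $\sin\theta\le\delta$, so $1-t^2\le\delta^2$ and $1-t\le\delta^2$, which is all your computation uses.

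For comparison, the paper avoids the beta-integral asymptotics altogether. It considers the solid cone $C$ over the cap (whose volume, relative to the unit ball, is exactly the cap measure), shows that the shell $C\setminus(1-2\delta^2)C$ is contained in $B_\delta(x)$ for the extremal centre $\|x\|=\sqrt{1-\delta^2}$, and observes that for large $n$ this shell occupies at least half of $C$; hence the cap measure is at most $2\,\mathrm{vol}(B_\delta(x))/\mathrm{vol}(B_1)=2\delta^n$. That argument needs no estimates for sphere integrals and makes both the constant $2$ and the "sufficiently large $n$" hypothesis transparent; your route, once step (iii) is completed as above, is a legitimate alternative and even gives the sharper order $\delta^{n-1}/\sqrt n$, but as written it stops just short of the point that matters.
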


\begin{proof}
The intersection of $B_\d (x)$ with the unit sphere is a spherical cap, and the measure of the spherical cap is maximized when the centre $x$ of $B_\d (x)$ is a vector of norm $\sqrt{1-\d^2}$.

Define $C$ to be the set of all $y$ such that $\|y\| \le 1$ and $\bigl\|x-\frac y{\|y\|}\bigr\|\leq\d$. This is a convex hull of the spherical cap and the origin, and the proportion of its volume to the volume of the entire unit ball, is equal to the probability we are trying to estimate.

We are going to show that $B_\d (x)$ contains the set $C\setminus(1-2\d^2)C$. Indeed, we claim now that $B_\d (x)$ contains all points $y$ such that $\bigl\|x-\frac y{\|y\|}\bigr\| \leq\d$ and $1 \geq \|y\|\geq 1-2\d^2$. By the convexity of $B_\delta(x)$ it is sufficient to prove this when $\|y\|=1-2\d^2$. The first assumption on $y$ implies that
\[\|x\|^2-\frac{2\langle x,y\rangle}{n\|y\|}+1\leq\d^2,\]
and therefore, since $\|x\|^2=1-\d^2$, that
\[\langle x,y\rangle\geq(1-\d^2)n\|y\|.\]
This implies that
\begin{align*}
\|x-y\|^2&= \|x\|^2+\|y\|^2-\frac{2\langle x,y\rangle}{n}\\
&\leq 1-\d^2+(1-2\d^2)^2-2(1-\d^2)(1-2\d^2)\\
&=\d^2,
\end{align*}
which proves the claim.

We have therefore shown that $B_\d (x)$ contains the set $C\setminus(1-2\d^2)C$. Now, since $(1-2\d^2)C$ has volume $(1-2\d^2)^n$ times that of $C$, if $n$ is sufficiently large, then $B_\d (x)$ contains at least half of $C$. The result follows, since the volume of $B_\d (x)$ is $\d^n$ times that of the unit sphere of $L_2^n$.
\end{proof}

Now let $y$ be a vector in $L_2^n$ supported on $J\subset \left\lbrace 1, \ldots , n\right\rbrace$ of cardinality $m$ and satisfying the inequality $\|y\|^2\geq\be(1-2\eta)$. Again let $P_J$ be the coordinate projection to the set $J$ and define
\[V_y = \{ x\in S^{n-1}: \, P_J x=y \} .\]

 We will next obtain an upper bound for the spherical volume of $(V_y)_\e$, which is the $\e$-expansion of $V_y$.

\begin{lemma} \label{ringvolume}
Let $\d>\eta>0$. Then when $n$ is sufficiently large, the probability that a random unit vector belongs to $(V_y)_\eta$ is at most $4e \d ^n n\log n\Bigl(1+\frac{\sqrt{1-\be(1-2\eta)}}{\d-\eta}\Bigr)^{n-m}$.
\end{lemma}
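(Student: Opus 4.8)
The plan is to realise $V_y$ as a sphere sitting inside a lower-dimensional affine subspace, cover it by a small net, and then take the $\eta$-expansion. First I would decompose $\R^n = \R^J \oplus \R^{J^c}$, where $J^c = \{1,\dots,n\}\setminus J$, and note that if $x\in S^{n-1}$ satisfies $P_J x = y$ then $\|P_{J^c}x\|^2 = \|x\|^2 - \|y\|^2 = 1 - \|y\|^2$. Writing $r = \sqrt{1-\|y\|^2}$, this exhibits $V_y$ as the translate by $y$ of the sphere of radius $r$ in the $(n-m)$-dimensional normed space $\R^{J^c}$ (with the norm inherited from $L_2^n$). Since $\|y\|^2 \ge \be(1-2\eta)$, we have $r \le \sqrt{1-\be(1-2\eta)}$.

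The second step is to build a net of $V_y$. Applying Lemma \ref{net} to the $(n-m)$-dimensional space $\R^{J^c}$ and rescaling by $r$ — a $\tfrac{\d-\eta}{r}$-net of the unit ball becomes, after multiplying by $r$, a $(\d-\eta)$-net of the ball of radius $r$ — I would obtain a $(\d-\eta)$-net $N$ of that ball, and hence of $V_y$ (which lies on its boundary), of cardinality at most
\[2e(n-m)\log(n-m)\Bigl(1+\frac{r}{\d-\eta}\Bigr)^{n-m}\le 2en\log n\Bigl(1+\frac{\sqrt{1-\be(1-2\eta)}}{\d-\eta}\Bigr)^{n-m},\]
valid provided $n-m$ (and hence $n$) is large. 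Translating $N$ by $y$ places it inside the affine subspace $\{x : P_J x = y\}$, though this is not needed for the estimate.

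Finally I would take the expansion. If $z \in (V_y)_\eta$ then $z$ lies within $\eta$ of some $x \in V_y$, which in turn lies within $\d-\eta$ of some $p \in N$, so $\|z-p\|\le\d$; hence $(V_y)_\eta$ is covered by at most $|N|$ balls of radius $\d$. By Lemma \ref{cone}, each such ball meets $S^{n-1}$ in a set of measure at most $2\d^n$ once $n$ is large, so a union bound gives that the measure of $(V_y)_\eta$ is at most $|N|\cdot 2\d^n \le 4e\d^n n\log n\bigl(1+\tfrac{\sqrt{1-\be(1-2\eta)}}{\d-\eta}\bigr)^{n-m}$, which is the claimed bound.

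Since all the real work is done by Lemmas \ref{net} and \ref{cone}, there is no serious obstacle; the one point to watch is the rescaling of Lemma \ref{net}, so that the radius $r$ enters linearly and the bound $r \le \sqrt{1-\be(1-2\eta)}$ can be substituted. One should also check that $n-m$ is in the range where those lemmas apply — but the remaining case, where $n-m$ stays bounded, is harmless: a crude volume bound then gives a net of $V_y$ of size bounded independently of $n$, which the factor $n\log n$ in the target estimate easily absorbs.
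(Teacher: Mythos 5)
Your proposal is correct and follows essentially the same route as the paper: cover $V_y$, viewed as an $(n-m)$-dimensional sphere of radius at most $\sqrt{1-\be(1-2\eta)}$, by a $(\d-\eta)$-net from Lemma \ref{net}, expand the balls to radius $\d$ to cover $(V_y)_\eta$, and apply Lemma \ref{cone} with a union bound. The extra details you supply (the explicit decomposition $\R^n=\R^J\oplus\R^{J^c}$, the rescaling of Lemma \ref{net}, and the remark about bounded $n-m$) are just the steps the paper leaves implicit.
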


\begin{proof}
If we cover $V_y$ by $N$ balls of radius $\d-\eta$, then the balls of radius $\d$ with the same centres cover $(V_y)_\eta$, so by Lemma \ref{cone} the probability that a random unit vector lies in $(V_y)_\eta$ is at most $2N\d^n$. But $V_y$ is an $(n-m)$-dimensional sphere of radius at most $\sqrt{1-\be(1-2\eta)}$, hence Lemma \ref{net} implies that it can be covered by at most $N=2en\log n\Bigl(1+\frac{\sqrt{1-\be(1-2\eta)}}{\d-\eta}\Bigr)^{n-m}$ balls of radius $\d-\eta$. This implies the result.
\end{proof}

\begin{theorem} \label{badvolume}
The probability that a random unit vector belongs to $(\Delta_{\be(1-2\eta)})_\eta$ is exponentially small.
\end{theorem}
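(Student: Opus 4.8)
The plan is to cover $(\Delta_{\be(1-2\eta)})_\eta$ by $3\eta$-expansions of sets of the form $V_{y'}$ together with a few genuine small balls, estimate the measure of each piece via Lemma~\ref{ringvolume} and Lemma~\ref{cone}, and — this is the crux — control the \emph{number} of pieces. Throughout I write $\mu$ for the uniform measure on $S^{n-1}$ and assume $n$ is large (which is harmless, as ``exponentially small'' is vacuous otherwise).

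First I would set up the covering. Given a unit vector $x\in\Delta_{\be(1-2\eta)}$, let $J=J(x)=\{i:|x_i|\text{ is a power of }\lambda\}$, put $m=|J|$ and $y=P_Jx$; by definition of $\Delta_{\be(1-2\eta)}$ one has $\|y\|^2=\P^x_i[|x_i|\text{ a power of }\lambda]\geq\be(1-2\eta)$, and $x\in V_y$. I then discretise $y$ to a vector $y'$ by replacing each coordinate with $|y_i|<\eta\sqrt{n/m}/\lambda$ by (the same sign times) the largest power of $\lambda$ below $\eta\sqrt{n/m}$, leaving the other coordinates unchanged. At most $m$ coordinates move, each by less than $\eta\sqrt{n/m}$, so $\|y-y'\|\le\eta$, and $\|y\|^2\le\|y'\|^2\le\|y\|^2+3\eta^2$. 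When $\|y\|^2\le1-3\eta^2$, so that $\|y'\|\le1$, a short computation — write $x=y+w$ with $w$ supported off $J$, rescale $w$ to return to $S^{n-1}$ — shows that $x$ lies within $(1+\sqrt3)\eta<3\eta$ of $V_{y'}$; the leftover ``degenerate'' vectors, those with $\|y\|^2>1-3\eta^2$, satisfy $\|P_{J^c}x\|<\sqrt3\,\eta$ and hence lie within $3\eta$ of the single point $y'$. This yields $(\Delta_{\be(1-2\eta)})_\eta\subseteq\bigcup_{m,J}\bigcup_{y'\in N_J}\big((V_{y'})_{3\eta}\cup B_{3\eta}(y')\big)$, where $N_J$ is the finite set of discretisations produced from vectors $x$ with $J(x)=J$.

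The main work — and the step I expect to be the obstacle — is to prove $|N_J|\le q^m$ for a constant $q=q(\lambda,\eta)$ that does \emph{not} depend on $n$. Feeding Corollary~\ref{lognet} in directly with $C=\sqrt{n/m}$ gives only $(2e\log_\lambda(\lambda^2\sqrt{n/m}/\eta))^m$, whose base grows like $\log n$ for small $m$ and is hopeless once multiplied by $\binom nm$. The fix is that the $L_2^n$ normalisation permits discretisation at the much coarser scale $\eta\sqrt{n/m}$ instead of $\eta$: each $y'\in N_J$ then has coordinates $\pm\lambda^{a_i}$ ($i\in J$) with $a_i\ge a_{\min}:=\lceil\log_\lambda(\eta\sqrt{n/m}/\lambda)\rceil$ and $\sum_{i\in J}\lambda^{2a_i}=n\|y'\|^2\le2n$, so Jensen applied to the convex function $t\mapsto\lambda^{2t}$ gives $\sum_{i\in J}a_i\le\frac m2\log_\lambda(2n/m)$, whence the shifted exponents $c_i=a_i-a_{\min}\ge0$ satisfy $\sum_i c_i\le\frac m2\log_\lambda(2n/m)-m a_{\min}\le m\big(1+\frac12\log_\lambda2+\log_\lambda(1/\eta)\big)$ — the two $\log_\lambda(n/m)$ contributions cancelling exactly. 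Lemma~\ref{simplex} (applied to $c_i+1$) then bounds the number of admissible exponent tuples, and allowing signs gives $|N_J|\le q^m$ with $q=2e\big(2+\frac12\log_\lambda2+\log_\lambda(1/\eta)\big)$.

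Finally I would add up. Each $y'\in N_J$ has $\be(1-2\eta)\le\|y'\|^2\le1$, so $V_{y'}$ is an $(n-m)$-sphere of radius $\le R:=\sqrt{1-\be(1-2\eta)}<1$, and the proof of Lemma~\ref{ringvolume} (with $3\eta$ in place of $\eta$) gives $\mu((V_{y'})_{3\eta})\le 4e\d^n n\log n\big(1+R/(\d-3\eta)\big)^{n-m}$ for any $\d>3\eta$; choosing $\d=3\eta+\sqrt{3\eta R}$ makes this $4en\log n\,\d^m\rho^{n-m}$ with $\rho=(\sqrt{3\eta}+\sqrt R)^2$ and $\d\le2\sqrt{3\eta}$, while $\mu(B_{3\eta}(y'))\le2(3\eta)^n$ by Lemma~\ref{cone}. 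Summing over the covering and using the binomial theorem,
\[\mu\big((\Delta_{\be(1-2\eta)})_\eta\big)\ \le\ 4en\log n\,(q\d+\rho)^n\ +\ 2\big(3\eta(1+q)\big)^n.\]
Provided $\eta$ is small in terms of $\be$ — which we are free to arrange, since $\eta=\lambda^{1/2k}-1\to0$ as $k\to\infty$ — the estimate $1-\sqrt R\ge\be(1-2\eta)/4$ forces $\rho\le1-\be/16$, while $q\d=O(\sqrt\eta\,\log(1/\eta))<\be/32$ and $3\eta(1+q)<1$; so both bases are strictly below $1$ and the probability decays exponentially in $n$. The only loose ends are the two elementary normalisation computations in the covering step, which should be routine.
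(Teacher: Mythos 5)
Your proposal is correct, and it follows the same overall architecture as the paper's proof: decompose according to the support $J$ and a discretised projection $y'$ with power-of-$\lambda$ coordinates, bound each fibre expansion $(V_{y'})_{O(\eta)}$ via Lemma \ref{ringvolume} (i.e.\ Lemmas \ref{net} and \ref{cone}), count the discretisations by the simplex/Jensen entropy bound, and sum over $m$ and the $\binom nm$ choices of $J$. The genuine difference is the scale of discretisation, which is exactly the step you single out as the crux. The paper applies Corollary \ref{lognet} verbatim at coordinate scale $\eta$ with $C=\sqrt{2n/m}$; contrary to your assessment this is not hopeless, because the resulting base $2e\log_\lambda\bigl(\lambda^2\sqrt{2/\theta}/\eta\bigr)$ depends on $n$ only through $\theta=m/n$, and the problematic regime $\theta\to 0$ is precisely where the exponent $\theta n$ is small; the paper copes with it by a two-case analysis ($\theta\ge\sqrt\d$ versus $\theta<\sqrt\d$, using $(1/\theta^2)^\theta\le 1+4\d^{1/4}$) before checking that the resulting expression is bounded by a constant less than $1$. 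Your coarser discretisation at coordinate scale $\eta\sqrt{n/m}$ (normalised scale $\eta$) makes the per-coordinate entropy $q$ independent of $n$ and $m$ --- your observation that the two $\log_\lambda(n/m)$ contributions cancel is correct --- and this buys a cleaner conclusion: the sum over $m$ collapses by the binomial theorem to $(q\d+\rho)^n$ plus the degenerate term, with no optimisation over $\theta$, at the cost of redoing the counting behind Corollary \ref{lognet} by hand. Taking $\eta$ small in terms of $\be$ is legitimate, since the constraints coming from the great-circle section only bound $\eta$ and $\be$ from above (the paper itself takes $\eta=10^{-12}$ and $\be=1/257$). The deferred normalisation computations are indeed routine; the one point to note is that $(\Delta_{\be(1-2\eta)})_\eta$ contains unit vectors near elements of $\Delta_{\be(1-2\eta)}$ of norm in $[1-\eta,1+\eta]$, not only unit ones, so your covering radius $3\eta$ should be inflated to $C\eta$ for a slightly larger absolute constant after rescaling --- the paper glosses the same point, and it does not affect the exponential bound.
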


\begin{proof}
Lemma \ref{approx} tells us that the set $\Delta_{\be(1-2\eta)}$ is an $\eta$-net of the set of unit vectors in $\Delta_\be^\eta$. Moreover, the same is true if we restrict to vectors of norm at most $1+\eta\leq 2$. For each $x\in\Delta_{\be(1-2\eta)}$ there is a set $J\subset\{1,2,\dots,n\}$ such that $\mu_x(J)\geq\be(1-2\eta)$ and $|x_i|$ is a power of $\lambda$ for every $i\in J$. If $|J|=m$, then Corollary \ref{lognet} implies that there is an $\eta$-net of size at most $\left(2e\log_\lambda \left(\sqrt{\frac{2n}{m}} \frac{\lambda^2}{\eta}\right) \right)^m$ of the set of vectors $y$ such that $|y_i|$ is a power of $\lambda$ for every $i\in J$, $y_i=0$ for $i\notin J$, and $\sum_iy_i^2\leq 2n$. 

Every unit vector in $\Delta_{\be(1-2\eta)}$ lies in $V_y$ for some such $J$ and $y$.
Therefore, summing over all $J$ and all $y$ in an $\eta$-net for each $J$ and applying Lemma \ref{ringvolume}, we find that the probability that a random unit vector belongs to $(\Delta_{\be(1-2\eta)})_\eta$ is at most

\[\sum_{m=1}^n\binom nm\Bigl(2e\log_\lambda\Bigl(\sqrt{\frac{2n}{m}} \frac{\lambda^2}{\eta}\Bigr)\Bigr)^m4e\d^nn\log n\Bigl(1+\frac{\sqrt{1-\be(1-2\eta)}}{\d-\eta}\Bigr)^{n-m}.\]
Now let us set $\lambda=4$. Using the upper bound $\binom nm\leq(en/m)^m$ and setting $\theta=m/n$, we can bound the previous expression above by
\[4en\log n\sum_{m=1}^n\Bigl(2e^2\d\frac 1\theta\log_4\Bigl(\frac{16\sqrt{2}}{\eta\sqrt{\theta}}\Bigr)\Bigr)^{\theta n}\Bigl(\d+\frac{\d\sqrt{1-\be(1-2\eta)}}{\d-\eta}\Bigr)^{(1-\theta)n}.\]
To prove that this is exponentially small, it is sufficient to show that  
\begin{align}\label{eq:prob}
    \Bigl(\frac{2e^2}{\log 4} \frac{\d}{\theta}\log\Bigl(\frac{16\sqrt{2}}{\eta\sqrt{\theta}}\Bigr)\Bigr)^{\theta}\Bigl(\d+\frac{\d\sqrt{1-\be(1-2\eta)}}{\d-\eta}\Bigr)^{1-\theta}
\end{align}
is bounded above by a constant less than 1 as $\theta$ varies. First of all, note that in \S\ref{sec2.3} we obtain the following bounds on parameters: $\eta\le 10^{-5}$ (since $\eta=s-1$ and we need $\tau=1-s^{-2}\le 10^{-4}$), $\e \le \tau^2 $ and $\beta< \frac{1}{32}\left(\frac{1}{8} - 6\epsilon \right)$. We need moreover, that $\d>\eta$. Let us note that the above expression is a decreasing function of $\beta$ and since $\e \le 10^{-8}$ we can take $\be= \frac{1}{257}$.

To begin with, we shall show that there exist constants for which the result holds and then we shall choose some particular values to obtain an upper bound for the maximum. Let us consider the condition $\d+\frac{\d\sqrt{1-\be(1-2\eta)}}{\d-\eta}<1$. For $\eta = c\d$ this becomes
\[ \d +\frac{\sqrt{1-\be(1-2\eta)}}{1-c}<1,\]
and we can choose $c$ such that $\frac{\sqrt{1-\be(1-2\eta)}}{1-c}$ is less than $1-\be/4$, and then if $\delta \le \be/8$ the inequality holds. For the first part of expression (\ref{eq:prob}), if we assume that $\theta\ge \sqrt{\delta}$ (and again take $\eta =c \delta$) we have that
\begin{align*}
    \Bigl(\frac{2e^2}{\log 4} \frac{\d}{\theta}\log \Bigl(\frac{16\sqrt{2}}{\eta\sqrt{\theta}}\Bigr)\Bigr)^{\theta} &\le \left( 2e^2 \sqrt{\delta }\Bigl( \log (16\sqrt{2})  + \log \frac{1}{c \delta} + \frac{1}{4}\log\frac{1}{\delta} \Bigr)\right)^\theta \\ &\le 2e^2 \sqrt{\delta} \left( C_1 + \frac{5}{4} \log \frac{1}{\delta} \right)^\theta,  
\end{align*}
where $C_1$ is an absolute constant. Hence, we can choose $\delta$ small enough such that this expression is at most, say, $\frac{9}{10}$. 

If $\theta< \sqrt{\delta}$, then we need to consider the whole expression (\ref{eq:prob}). To begin with note that
\begin{align*}
    \Bigl(\frac{2e^2}{\log 4} \frac{\d}{\theta}\log \Bigl(\frac{16\sqrt{2}}{\eta\sqrt{\theta}}\Bigr)\Bigr)^{\theta} &\le \Bigl( 2e^2 \frac{\delta}{\theta} (C_1  + \frac{5}{2}\log\frac{1}{\theta} )\Bigr)^\theta \le \left( \frac{C_2 \delta \log \frac{1}{\theta}}{\theta} \right)^\theta \\ &\le (C_2 \delta)^\theta \left(\frac{1}{\theta^2}\right)^\theta \le \left(\frac{1}{\theta^2}\right)^\theta,
\end{align*}
for $\delta <1/C_2$. Moreover, we have that $\log \left( (\frac{1}{\theta})^{2\theta}\right) =2\theta \log \frac{1}{\theta} \le 2 \sqrt{\theta} \le 2 \delta^{1/4}$. Therefore we can estimate $\left(\frac{1}{\theta^2}\right)^\theta$ by $1+4 \delta ^{1/4}$. Recalling that the right hand side part in (\ref{eq:prob}) is at most $(1-\frac{\beta}{8})^{1-\theta} \le (1-\frac{\beta}{8})^{1-\sqrt{\delta}}$, we deduce that the expression (\ref{eq:prob}) is less than $1$ if we choose $\delta$ such that $(1+4\delta^{1/4})(1-\frac{\beta}{8})^{1-\sqrt{\delta}} <1$. Finally we choose the smallest $\delta$, so that it fulfils all the inequalities and hence the result follows.

One can check that if we choose $\eta =10^{-12}, \delta= 10^{-6}$ and $\beta=\frac{1}{257}$, then the maximum is  at most 0.9982. Therefore, the desired probability is at most $4en^2 \log n (0.9982)^n$.

\end{proof}

Recall the set $\Delta=\Delta_\beta^\eta $ defined in \eqref{def: delta set}. It follows that

\begin{corollary} \label{randomsubspace}
There exists $\a>0$ such that if $n$ is sufficiently large, then the probability that a random subspace $X$ of dimension at most $\a n$ contains a vector $x\in \Delta$, is exponentially small. 
\end{corollary}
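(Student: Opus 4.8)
The plan is to combine the volume bound of Theorem~\ref{badvolume} with the standard net‑and‑union‑bound argument; the only real work is a harmless enlargement of the expansion parameter. First I would observe that, by Lemma~\ref{approx}, every unit vector of $\Delta=\Delta_\be^\eta$ lies within $\eta$ of $\Delta_{\be(1-2\eta)}$, so that $\Delta\subset(\Delta_{\be(1-2\eta)})_\eta$ and hence $\Delta_\eta\subset(\Delta_{\be(1-2\eta)})_{2\eta}$. The proof of Theorem~\ref{badvolume} applies verbatim with the $\eta$-expansion in Lemma~\ref{ringvolume} enlarged to a $2\eta$-expansion: this merely replaces a factor $\d-\eta$ by $\d-2\eta$ in~\eqref{eq:prob}, and since $\d$ is much larger than $\eta$ this leaves the maximum of~\eqref{eq:prob} a constant $\kappa<1$ (the numerics there have ample room---the bound is at most $0.9982$). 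Consequently a random unit vector of $S^{n-1}$ lies in $\Delta_\eta$ with probability at most $\kappa^n$ for all large $n$, with $\kappa$ depending only on the absolute constants already fixed.

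Next I would run the net‑and‑union‑bound argument. Fix a subspace $X_0$ of dimension $d\le\lfloor\a n\rfloor$ together with an $\eta$-net $N_0$ of its unit sphere satisfying $|N_0|\le(3/\eta)^d$. A Haar‑random subspace $X$ of dimension $d$ has the form $RX_0$ for a Haar‑random rotation $R$, and then $RN_0$ is an $\eta$-net of the unit sphere of $X$. If $X$ contains a vector $x\in\Delta$---necessarily a unit vector, since $\Delta\subset S^{n-1}$---then $\|Rp-x\|\le\eta$ for some $p\in N_0$, whence $Rp\in\Delta_\eta$. Since $Rp$ is uniformly distributed on $S^{n-1}$ for each fixed $p$, the union bound gives
\[
\P[\,X\cap\Delta\neq\emptyset\,]\ \le\ \sum_{p\in N_0}\P[\,Rp\in\Delta_\eta\,]\ \le\ |N_0|\,\kappa^{n}\ \le\ \Bigl(\tfrac{3}{\eta}\Bigr)^{d}\kappa^{n}\ \le\ \Bigl(\tfrac{3}{\eta}\Bigr)^{\a n}\kappa^{n}.
\]

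Finally I would take $\a>0$ small enough that $\a\log(3/\eta)<\log(1/\kappa)$, so that $(3/\eta)^{\a}\kappa<1$; then the displayed bound is at most $\bigl((3/\eta)^{\a}\kappa\bigr)^{n}$, which decays exponentially. Since $\a$ depends only on $\eta$ and $\kappa$, hence only on the absolute constants fixed earlier, this yields the corollary.

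I do not expect a serious obstacle: this is the standard passage from ``an exponentially small set'' to ``a random low‑dimensional subspace misses it'', once Theorem~\ref{badvolume} is in hand. The one point that needs a moment's care is the first step---Theorem~\ref{badvolume} controls only the $\eta$-expansion of $\Delta_{\be(1-2\eta)}$, whereas the net argument needs a genuine neighbourhood $\Delta_\eta$ of $\Delta$---but this follows at once from the slack in the numerical estimate, and everything else (the volumetric net bound $(3/\eta)^d$, the rotation‑invariance of the uniform measure on $S^{n-1}$, and the optimization over $\a$) is routine.
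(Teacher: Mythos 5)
Your proposal is correct and follows essentially the same route as the paper: combine the exponential volume bound of Theorem~\ref{badvolume} (via Lemma~\ref{approx}) with a union bound over a randomly rotated $\eta$-net of the subspace's unit sphere, then choose $\a$ so that the net size loses to the per-point probability. Your explicit handling of the $\eta$-versus-$2\eta$ expansion (rerunning Theorem~\ref{badvolume} with a $2\eta$-expansion, harmless since $\d\gg\eta$) is in fact slightly more careful than the paper's wording of the same step.
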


\begin{proof} 
Let $\sigma<1$ be such that the probability that a random unit vector belongs to $(\Delta_{\be(1-2\eta)})_\eta$ is at most $\sigma^n$ when $n$ is sufficiently large. Now choose $\a>0$ such that $(1+\frac 1\eta)^\alpha<\sigma^{-1}$. Then for sufficiently large $n$, the unit sphere of any subspace of dimension at most $\alpha n$ has an $\eta$-net of size $\tau^{-n}$ for some $\tau$ with $\tau^{-1}<\sigma^{-1}$. If we take such a net and rotate it randomly, then the probability that any element of the net lands within $\eta$ of $\Delta_{\beta(1-2\eta)}$ is exponentially small. 

By Lemma \ref{approx} we have that $\Delta^\eta_\beta \subset (\Delta_{\be(1-2\eta)})_\eta$ and hence it follows that the probability that a random subspace intersects $\Delta$ is exponentially small.
\end{proof}

\begin{remark}
For the particular choice of constants made in the proof of Corollary \ref{badvolume}, the condition we obtain in Corollary \ref{randomsubspace} is
\[ 0.9982 (4en^2 \log n )^{1/n} < \sigma .\]

The left hand side is a decreasing function of $n$ with limit $0.9982$, so for $n$ large enough the left hand side is less than $0.999$. We then have that

\[ \alpha < \frac{- \ln (0.999) }{\ln(1+10^{10})} \approx 4.345\times 10^{-5}. \]
 Hence, for $n$ large enough we can take $\alpha =4.3 \times10^{-5}$ in the construction. 

\end{remark}

\section{Conclusion}

\iftrue
\else
We have now proved our main theorem, Theorem \ref{main}, but since the steps of the argument are somewhat scattered, let us briefly recall them. We defined a continuous odd bijection $\v:\R^{n}\to\R^{n}$ and a normalized version $\psi:S^{n-1}\to S^{n-1}$. In \eqref{def: delta set} we defined a set $\Delta$ and in the last section $\S$\ref{sec2.4} we proved that a random $\lfloor\a n\rfloor$-dimensional subspace lies in $\Delta^c$ with exponentially high probability. 

We also defined a set $\Gamma\subset S^{n-1}$, see \eqref{def: gamma set}, and observed that $\psi(\Delta^c)\subset\Gamma$. Section \ref{sec2.3} was devoted to the proof that no great circle is contained in the expansion $\Gamma_\e$. 

Therefore, if we pick a random $\lfloor\a n\rfloor$-dimensional subsphere $X$, then with exponentially high probability we have that $X\cap S^{n-1}\subset\Delta^c$, and therefore that $\psi(X\cap S^{n-1})\subset\psi(\Delta^c)\subset\Gamma$, from which it follows that $\psi(X)_\e$ contains no great circle. This gives Theorem \ref{main}. 
\fi

In this final section we briefly explain our original motivation for this result. We were interested in the following question of Milman, concerning a possible strengthening of Dvoretzky's theorem.

\begin{question} Let $k$ be a positive integer, let $C\geq 1$ and let $\e>0$. Does there exist $n$ such that if $X=(\R^n,\vertiii{\cdot})$ is any normed space such that $\|x\|\leq \vertiii{x}\leq C\|x\|$ for every $x\in X$, then $X$ has a subspace of dimension $k$ that is $(1+\e)$-complemented and has Banach-Mazur distance at most $1+\e$ from $\ell_2^k$? 
\end{question}

For the reader unfamiliar with the terminology, the \emph{Banach-Mazur distance} between two isomorphic normed spaces $X$ and $Y$ is the infimum of $\|T\|\|T^{-1}\|$ over all linear isomorphisms $T:X\to Y$, and a subspace $U\subset X$ is $\alpha$-\emph{complemented} if there is a projection $P:X\to U$ with $\|P\|\leq\a$.

We initially attempted to obtain a positive answer to a stronger question, namely the following.

\begin{question} \label{strongmilman} Let $k$ be a positive integer, let $C\geq 1$ and let $\e>0$. Does there exist $n$ such that if $X=(\R^n,\vertiii{\cdot})$ is any normed space such that $\|x\|\leq \vertiii{x}\leq C\|x\|$ for every $x\in X$, then $X$ has a subspace $U$ of dimension $k$ such that there exists $\be$ with $\be\|u\|\leq \vertiii{u}\leq\be(1+\e)\|u\|$ for every $u\in U$ and such that $\|P\|\leq 1+\e$, where $P$ is the orthogonal projection from $X$ to $U$?
\end{question}

In relation to this conjecture, we identified a class of points $x\in X$ that we called $\e$-\emph{good}: these are points $x$ such that the orthogonal projection to the 1-dimensional space spanned by $x$ has norm at most $1+\e$. The second question turns out (by a not very difficult argument) to be equivalent to asking whether for sufficiently large $n$ there is a $k$-dimensional subspace consisting entirely of $\e$-good points. 

A number of examples have led us to believe that for any normed space satisfying the conditions of the question, the set of $\e$-good points is ``genuinely $cn$-dimensional" for some positive constant $c$ that depends on $C$ and $\e$. (We have not formulated a suitable conjecture, but one possible definition of a set $X\subset S^{n}$ such that $X=-X$ being ``at least $m$-dimensional" is that when we regard $X$ as a subset of projective $n$-space, it is not homotopic to a subset of dimension less than $m$.) Also, a point that is close to an $\e$-good point is $2\e$-good. Therefore, a positive answer to the question would follow if one could show that the $\e$-expansion of a ``genuinely high-dimensional" subset of the sphere contains a $k$-dimensional linear subsphere. 

However, the main result of this paper shows that this is false. In a follow-up paper we shall show that the answer to Question \ref{strongmilman} is also negative. However, while the two results arose from the same line of thought, the constructions are somewhat different, and neither result directly implies the other.

\iftrue
\else
As explained at the end of the introduction, this also gives a counterexample to Question \ref{lastversion}. However, there is a small technical issue in that our scalar function $\v:\R\to\R$ used in the construction is not differentiable at zero, and hence the function $\psi:S^{n-1}\to S^{n-1}$ is not differentiable at any point with a coordinate equal to zero.

This can be dealt with in the obvious way -- by approximating $\v$ by a function that is differentiable everywhere and equal to $\v$ except inside a very small interval about 0. Unfortunately, there does not seem to be a quick and easy argument that this change does not give rise to a great circle that consists entirely of $\delta$-good$^*$ points. However, it is straightforward to make small adjustments to our construction, the main one of which is to enlarge very slightly the set $B$ so that it contains an interval about 0, and to show that the final result is indeed robust in the appropriate sense.

Thus, a question that can be thought of as a suitable Lipschitz version of Milman's original question has a negative answer. However, because the derivatives of the function $\psi$ at two points $x$ and $y$ can be very different even when $x$ and $y$ are close, when we compose it with a weighted $\ell_2$ norm as described in \S\ref{connection}, we obtain a function that is not a norm, so we do not obtain a counterexample to Question \ref{strongmilman} (or equivalently Question \ref{egoodquestion}). To obtain such a counterexample, one would need much better control over the second derivative of $\psi$.

\section{A norm with few $\e$-good points such that the standard basis is symmetric.} 

The example we gave earlier of a norm such that the set of $\e$-good points has small measure was just the Euclidean norm in a non-standard position. From the perspective of Milman's question, this is an unsatisfactory example, since it is isometric to the usual Euclidean norm. In particular, we have not ruled out that for any norm that is $C$-Euclidean, there is some position (that is, invertible linear transformation of the norm) such that almost all points are $\e$-good, which would straightforwardly imply a positive answer to Milman's question.

In this short section we present strong evidence that such a result cannot be obtained, by giving an example of a $C$-Euclidean norm such that the standard basis is 1-symmetric and the measure of $\e$-good points is exponentially small. We do not prove that the same is true for all norms that are equivalent to this one, but it seems highly unlikely that a non-standard position of a highly symmetric unit ball would have far more good points than the standard one.

The norm we take is the mixed $\ell_2$/$\ell_\infty$ norm on $\R^n$ defined by the formula
\[\|x\|=\max _{|I|=m}|P_I x|,\] 
where $P_I$ is the coordinate projection to the set of indices $I$. If $m=cn$, then it is simple to see that $|x|\geq\|x\|\geq c^{1/2}|x|$ for every $x$, so this norm is $c^{-1/2}$-Euclidean.

\begin{lemma}
The measure of the set of $\epsilon$-good points $x\in S^{n-1}$ with respect to the norm $\| \cdot \|$ is at most $(12 \e^{\frac{1-\lambda}{2}})^n \frac{1}{(1-\sqrt{\e}-\lambda)\sqrt{n}}$, where $\lambda=\frac{m+1}{n}$.
\end{lemma}

\begin{proof}
Let $x$ be an $\e$-good point and suppose that $x_1\geq x_2\geq\dots\geq x_n\geq 0$. Let $y$ be the vector with coordinates $(x_1,x_2,\dots,x_m,x_m,\dots,x_m)$, and note that $\|y\|=\|x\|$. Since $x$ is $\e$-good, $y$ is not a witness for $x$ being $\e$-bad, and therefore
\[\langle x,y\rangle\leq(1+\e)|x|^2.\]
Expanding both sides, we find that
\[\sum_{i\leq m}x_i^2+\sum_{i>m}x_ix_m\leq(1+\e)\sum_ix_i^2\]
and therefore that
\[\sum_{i>m}x_i(x_m-x_i)\leq\e\sum_{i}x_i^2.\]

Let $A=\{i>m:x_i\geq x_m/2\}$ and let $u=P_Ay$. In other words, $u_i=x_m$ if $x_i\geq x_m/2$ and 0 otherwise. Then $(x_i-u_i)^2\leq x_i(x_i-u_i)$ for every $i$. If we also let $v$ be the coordinate projection of $x$ to the set of coordinates greater than $m$, then this implies that $|v-u|^2\leq\e |x|^2 =\e$.

Hence, for every $\e$-good unit vector $x$ we can find a set $J$ that consists of $n-m$ coordinates, a positive real number $\lambda$, and a vector $y$ such that $|x-y|\leq\sqrt\e$ and $y_i\in\{0,\lambda,-\lambda\}$ for every $i\in J$. 

It is therefore sufficient to prove that the $\sqrt\e$-expansion of the set of such $y$ intersects the unit sphere in a set of small measure.

To do this, let us first fix a set $J$ and a subset $A\subset J$ and a set of signs $\e_i=\pm 1$, one for each $i\in A$, and consider the set of all $y$ such that $\e_iy_i$ is constant on $A$ and $y_i=0$ on $J\setminus A$. This is a subspace of $\R^n$ of dimension $m+1$, so by standard estimates \cite{Artstein2002} its $\sqrt\e$-expansion intersects the unit sphere in a set of measure at most 
\[ \mu(( E_{m+1})_{\sqrt{\e}}) \simeq \frac{1}{\sqrt{n\pi}} \frac{\sqrt{\lambda(1-\lambda)}}{(1-\lambda)-\sin^2(\sqrt{\e})}e^{-\frac{n}{2}u(\lambda,\e)} \le \frac{1}{\sqrt{n\pi}} \frac{1}{cos^2 (\sqrt{\e})-\lambda} e^{-\frac{n}{2}u(\lambda,\e)},
\]
where $\lambda = \frac{m+1}{n}$ and $u(\lambda,\e)= (1-\lambda) \log \frac{1-\lambda}{\sin^2 (\sqrt{\e})} + \lambda \log \frac{\lambda}{\cos ^2( \sqrt{\e})} $. This we can estimate from below \[u(\lambda,\e) =\log \left( \frac{1-\lambda}{\e} \left(\frac{\lambda}{1-\lambda}\right)^\lambda (\tan \sqrt{\e})^{2\lambda}\right) \ge \log \left(  \e ^{\lambda-1} \frac{\lambda^\lambda }{(1-\lambda)^{\lambda-1}} \right). \]

Hence we have
\begin{align*}
    \mu(( E_{m+1})_{\sqrt{\e}}) &\le \frac{1}{\sqrt{n}(1-\sqrt{\e} - \lambda)}\left( \e ^{\lambda-1} \frac{\lambda^\lambda }{(1-\lambda)^{\lambda-1}}\right)^{-n/2} \\ &\le \frac{1}{\sqrt{n}(1-\sqrt{\e} - \lambda)}\left( \frac{\e ^{1-\lambda}}{(1-\lambda)^{1-\lambda} \lambda ^\lambda} \right)^{n/2}.
\end{align*}

The number of ways of choosing $A,J$ and the signs is $\binom n{\lambda n}3^{(1-\lambda)n}$. 
Therefore, for a fixed $\lambda=(m+1)/n$ we get in total that the measure of good points is at most
\begin{align*}
    \binom n{\lambda n}3^{(1-\lambda)n} \frac{1}{\sqrt{n}(1-\sqrt{\e} - \lambda)} & \left( \frac{\e ^{1-\lambda}}{(1-\lambda)^{1-\lambda} \lambda ^\lambda} \right)^{n/2} \\ &\le \left(\frac{e}{\lambda} \right)^{\lambda n}\frac{1}{\sqrt{n}(1-\sqrt{\e} - \lambda)}\left( \frac{\e ^{1-\lambda}}{(1-\lambda)^{1-\lambda} \lambda ^\lambda} \right)^{n/2} 3^{(1-\lambda)n} \\ 
    &\le \frac{1}{\sqrt{n}(1-\sqrt{\e} - \lambda)} 12^n \e^{(1-\lambda)n/2},
\end{align*} 
since the function $x^{3x/2}(1-x)^{(1-x)/2}$ is bounded below for all $x\in [0,1]$. Hence, the measure of $\e$-good points is at most $(12 \e^{\frac{1-\lambda}{2}})^n \frac{1}{(1-\sqrt{\e}-\lambda)\sqrt{n}}$, and for a fixed epsilon  and $\lambda< \min \{ 1-2\sqrt{\e}, 1-2\frac{\log 12}{ \log \frac{1}{\e}} \}$, this tends to $0$ as the dimension increases. 
\end{proof}

\fi

\nocite{*}
\bibliography{bibfile} 
\bibliographystyle{amsplain}

\end{document}